\documentclass[a4paper, 12pt]{amsart}

\usepackage{amsmath,amssymb,enumitem,verbatim,stmaryrd,xcolor,microtype,graphicx}
\usepackage[T1]{fontenc}
\usepackage[utf8]{inputenc}
\usepackage[english]{babel}
\usepackage[top=3.4cm,bottom=3.4cm,left=3.2cm,right=3.2cm]{geometry}
\usepackage[bookmarksdepth=3,linktoc=page,colorlinks,linkcolor={red!80!black},citecolor={red!80!black},urlcolor={blue!80!black}]{hyperref}

\usepackage{tikz}\usetikzlibrary{matrix,arrows,decorations.markings}
\usepackage{tikz-cd}
\pgfrealjobname{hypergeometry}

\usepackage[mathcal]{euscript}                               
\usepackage{mathptmx}                                        
\usepackage{etoolbox}\makeatletter\patchcmd{\@startsection}{\@afterindenttrue}{\@afterindentfalse}{}{}\makeatother    
\patchcmd{\section}{\scshape}{\bfseries}{}{}\makeatletter\renewcommand{\@secnumfont}{\bfseries}\makeatother           
\usepackage[backgroundcolor=yellow,linecolor=yellow,textsize=footnotesize]{todonotes}\setlength{\marginparwidth}{2,5cm} \makeatletter \providecommand \@dotsep{5} \def\listtodoname{List of Todos} \def\listoftodos{\@starttoc{tdo}\listtodoname} \makeatother 

\theoremstyle{plain}
\newtheorem{thm}{Theorem}[section]
\newtheorem{cor}[thm]{Corollary}
\newtheorem{lemma}[thm]{Lemma}
\newtheorem{prop}[thm]{Proposition}
\newtheorem{thmA}{Theorem}  

\theoremstyle{definition}

\newtheorem{rem}[thm]{Remark}
\newtheorem*{rem*}{Remark}
\newtheorem{ex}[thm]{Example}
\newtheorem{advantage}{Property}

\usepackage{etoolbox}
\makeatletter
\patchcmd{\@startsection}{\@afterindenttrue}{\@afterindentfalse}{}{}             
\patchcmd{\part}{\bfseries}{\bfseries\LARGE}{}{}
\patchcmd{\section}{\scshape}{\bfseries}{}{}\renewcommand{\@secnumfont}{\bfseries} 
\patchcmd{\@settitle}{\uppercasenonmath\@title}{\large}{}{}
\patchcmd{\@setauthors}{\MakeUppercase}{}{}{}
\addto{\captionsenglish}{} 
\addto{\captionsenglish}{} 
\addto{\captionsenglish}{} 
\makeatother

\usepackage{fancyhdr}

\pagestyle{fancy}
\fancyhead{}
\fancyfoot{}
\fancyhead[OR,EL]{\footnotesize \thepage}
\fancyhead[OC]{\footnotesize Tropical geometry over the tropical hyperfield}
\fancyhead[EC]{\footnotesize Oliver Lorscheid}
\setlength{\headheight}{12pt}

\DeclareRobustCommand{\gobblefour}[4]{}    
\DeclareSymbolFont{sfoperators}{OT1}{bch}{m}{n} \DeclareSymbolFontAlphabet{\mathsf}{sfoperators} \makeatletter\def\operator@font{\mathgroup\symsfoperators}\makeatother 
\DeclareSymbolFont{cmletters}{OML}{cmm}{m}{it}              
\DeclareSymbolFont{cmsymbols}{OMS}{cmsy}{m}{n}
\DeclareSymbolFont{cmlargesymbols}{OMX}{cmex}{m}{n}
\DeclareMathSymbol{\myjmath}{\mathord}{cmletters}{"7C}     \let\jmath\myjmath 
\DeclareMathSymbol{\myamalg}{\mathbin}{cmsymbols}{"71}     
\DeclareMathSymbol{\mycoprod}{\mathop}{cmlargesymbols}{"60}\let\coprod\mycoprod
\DeclareMathSymbol{\myalpha}{\mathord}{cmletters}{"0B}     \let\alpha\myalpha 
\DeclareMathSymbol{\mybeta}{\mathord}{cmletters}{"0C}      \let\beta\mybeta
\DeclareMathSymbol{\mygamma}{\mathord}{cmletters}{"0D}     \let\gamma\mygamma
\DeclareMathSymbol{\mydelta}{\mathord}{cmletters}{"0E}     \let\delta\mydelta
\DeclareMathSymbol{\myepsilon}{\mathord}{cmletters}{"0F}   \let\epsilon\myepsilon
\DeclareMathSymbol{\myzeta}{\mathord}{cmletters}{"10}      \let\zeta\myzeta
\DeclareMathSymbol{\myeta}{\mathord}{cmletters}{"11}       \let\eta\myeta
\DeclareMathSymbol{\mytheta}{\mathord}{cmletters}{"12}     \let\theta\mytheta
\DeclareMathSymbol{\myiota}{\mathord}{cmletters}{"13}      \let\iota\myiota
\DeclareMathSymbol{\mykappa}{\mathord}{cmletters}{"14}     \let\kappa\mykappa
\DeclareMathSymbol{\mylambda}{\mathord}{cmletters}{"15}    \let\lambda\mylambda
\DeclareMathSymbol{\mymu}{\mathord}{cmletters}{"16}        \let\mu\mymu
\DeclareMathSymbol{\mynu}{\mathord}{cmletters}{"17}        \let\nu\mynu
\DeclareMathSymbol{\myxi}{\mathord}{cmletters}{"18}        \let\xi\myxi
\DeclareMathSymbol{\mypi}{\mathord}{cmletters}{"19}        \let\pi\mypi
\DeclareMathSymbol{\myrho}{\mathord}{cmletters}{"1A}       \let\rho\myrho
\DeclareMathSymbol{\mysigma}{\mathord}{cmletters}{"1B}     \let\sigma\mysigma
\DeclareMathSymbol{\mytau}{\mathord}{cmletters}{"1C}       \let\tau\mytau
\DeclareMathSymbol{\myupsilon}{\mathord}{cmletters}{"1D}   \let\upsilon\myupsilon
\DeclareMathSymbol{\myphi}{\mathord}{cmletters}{"1E}       \let\phi\myphi
\DeclareMathSymbol{\mychi}{\mathord}{cmletters}{"1F}       \let\chi\mychi
\DeclareMathSymbol{\mypsi}{\mathord}{cmletters}{"20}       \let\psi\mypsi
\DeclareMathSymbol{\myomega}{\mathord}{cmletters}{"21}     \let\omega\myomega
\DeclareMathSymbol{\myvarepsilon}{\mathord}{cmletters}{"22}\let\varepsilon\myvarepsilon
\DeclareMathSymbol{\myvartheta}{\mathord}{cmletters}{"23}  \let\vartheta\myvartheta
\DeclareMathSymbol{\myvarpi}{\mathord}{cmletters}{"24}     \let\varpi\myvarpi
\DeclareMathSymbol{\myvarrho}{\mathord}{cmletters}{"25}    \let\varrho\myvarrho
\DeclareMathSymbol{\myvarsigma}{\mathord}{cmletters}{"26}  \let\varsigma\myvarsigma
\DeclareMathSymbol{\myvarphi}{\mathord}{cmletters}{"27}    \let\varphi\myvarphi

\DeclareMathOperator{\Spec}{Spec}

\DeclareMathOperator{\Hom}{Hom}

\DeclareMathOperator{\OBAff}{{OBAff}}

\DeclareMathOperator{\Trop}{{Trop}}
\DeclareMathOperator{\Bend}{{Bend}}
\DeclareMathOperator{\bend}{{bend}}

\DeclareMathOperator{\OBlpr}{{OBlpr}}

\DeclareMathOperator{\SRings}{{SRings}}

\DeclareMathOperator{\sign}{{sign}}

\newcommand\A{{\mathbb A}}
\newcommand\B{{\mathbb B}}
\newcommand\C{{\mathbb C}}
\newcommand\F{{\mathbb F}}
\newcommand\G{{\mathbb G}}

\newcommand\N{{\mathbb N}}

\newcommand\R{{\mathbb R}}
\renewcommand\S{{\mathbb S}}
\newcommand\T{{\mathbb T}}
\newcommand\Z{{\mathbb Z}}

\newcommand\bP{{\mathbf P}}
\newcommand\bR{{\mathbf R}}
\newcommand\bS{{\mathbf S}}
\newcommand\bT{{\mathbf T}}
\newcommand\bX{{\mathbf X}}

\newcommand\bk{{\mathbf{k}}}
\newcommand\bv{{\mathbf{v}}}
\newcommand\bw{{\mathbf{w}}}

\newcommand\Fun{{\F_1}}

\renewcommand\int{\textup{int}}

\newcommand\pos{{\textup{pos}}}

\newcommand\alg{\textup{alg}}

\newcommand\an{\textup{an}}

\newcommand\idem{\textup{idem}}

\newcommand\core{\textup{core}}

\newcommand\mon{\textup{mon}}

\newcommand\ev{\textup{ev}}
\newcommand\trop{\textup{trop}}
\renewcommand\log{\textup{log}}

\newcommand\gp{\textup{gp}}

\newcommand\hyp{\textup{hyp}}

\renewcommand\={\equiv}
\renewcommand\geq{\geqslant}
\renewcommand\leq{\leqslant}
\newcommand{\gen}[1]{\langle #1 \rangle}
\newcommand{\bpquot}[2]{#1\!\sslash\!#2}
\newcommand{\bpgenquot}[2]{#1\!\sslash\!\gen{#2}}
\newcommand{\bigbpgenquot}[2]{#1\big\slash\!\!\!\big\slash\!\big\langle\,{#2}\,\big\rangle}

\newcommand{\hyperplus}{{\,\raisebox{-1.1pt}{\larger[-0]{$\boxplus$}}\,}}

\setcounter{tocdepth}{1}   

\title{Tropical geometry over the tropical hyperfield}

\author{Oliver Lorscheid}
\address{\rm Instituto Nacional de Matem\'atica Pura e Aplicada, Rio de Janeiro, Brazil}
\email{{oliver@impa.br}}
\thanks{The author thanks the Max Planck Institute for Mathematics that hosted and supported him during the preparation of this manuscript.}

\begin{document}

\begin{abstract}
 In this text, we merge ideas around the tropical hyperfield with the theory of ordered blueprints to give a new formulation of tropical scheme theory. The key insight is that a nonarchimedean absolute value can be considered as a morphism into the tropical hyperfield. In turn, ordered blueprints make it possible to consider the base change of a classical variety to the tropical hyperfield. We call this base change the \emph{scheme theoretic tropicalization} of the classical variety.
 
 Our first main result describes the Berkovich analytification and the tropicalization of a classical variety as sets of rational points of scheme theoretic tropicalizations, including a characterization of the respective topologies. Our second main result shows that the Giansiracusa bend relations can be derived by a natural construction from the scheme theoretic tropicalization.
\end{abstract}

\maketitle


\begin{small} \tableofcontents \end{small}


\section*{Introduction}
\label{introduction}


\subsection*{History of the tropical hyperfield}

While hyperrings were defined as early as 1956 by Krasner (\cite{Krasner57}), the tropical hyperfield $\bT$ was introduced more recently in 2011 by Oleg Viro (\cite{Viro11}), with a reformulation of tropical geometry in mind. Around the same time Connes and Consani (\cite{Connes-Consani10}) recognized the relevance of hyperfields for absolute arithmetic and later found back Viro's tropical hyperfield from their point of view (\cite{Connes-Consani15}). A closely related notion are Izhakian's extended tropical numbers (\cite{Izhakian09}), which in fact were introduced before the tropical hyperfield, though the relation between these objects was understood only later (cf.\ Remark \ref{rem: extended tropical numbers}).

In 2013, the seminal paper \cite{Giansiracusa-Giansiracusa16} by Jeffrey and Noah Giansiracusa inaugurated tropical scheme theory, a new branch of tropical geometry that seeks for a scheme theoretic formulation of tropical geometry. Maclagan and Rinc\'on (\cite{Maclagan-Rincon14}) showed soon after that the weights of tropical varieties are encoded in the scheme structure, and the author (\cite{Lorscheid15}) put this theory on a more sophisticated footing using ordered blueprints. This latter paper contains the observation that hyperrings are ordered blueprints, which provides a scheme theory for hyperrings as a byproduct. A variation of algebraic geometry over hyperrings was developed independently by Jun (\cite{Jun17}, \cite{Jun18}) while exploring the relation to tropical geometry from an altered angle.

In 2016, Baker and Bowler (\cite{Baker-Bowler19}) formulated matroid theory with coefficients in a hyperfield. In particular, matroids over the tropical hyperfield turn out to valuated matroids, aka tropical linear spaces following Speyer (\cite{Speyer08}). A joint follow-up work of Baker and the author (\cite{Baker-Lorscheid18}) uses scheme theory for ordered blueprints to construct moduli spaces of matroids. In particular, the moduli space of tropical linear spaces, aka the Dressian, is an object over the tropical hyperfield.


\subsection*{Intention and scope of this text}

Our exposition is meant as a reader friendly introduction to tropical scheme theory from the particular perspective of the tropical hyperfield. While it is build on ideas and theories that were developed in the aforementioned works, this text explores a new mixture of hyperfields with algebraic geometry for ordered blueprints. As a result, we gain a frame work for tropical scheme theory that has certain advantages over previous approaches that are based on the tropical semifield.

Since a nonarchimedean absolute value can be interpreted as a morphism into the tropical hyperfield $\bT$, we can define the scheme theoretic tropicalization of a classical variety \emph{literally} as the base change along this morphism into $\bT$. One of our main efforts in this text is to show that the Kaijiwara-Payne tropicalization emerges from the scheme theoretic tropicalization as the set of $\bT$-rational points, including a characterization of its topology. A surprising insight to us is that the Giansiracusa bend relations appear naturally from the scheme theoretic tropicalization by enforcing the relation $1+1=1$.

All this makes us believe that the tropical hyperfield is a promising tool for tropical scheme theory, and we hope that this text stimulates future developments in this direction.


\subsection*{The tropical hyperfield}

Let us introduce the protagonist of our text, which acts out as a subtle variant of the \emph{tropical semifield} $\overline\R$. We begin with a description of $\overline\R$, which appears in different incarnations in the literature: while the \emph{min-plus-algebra} and the \emph{max-plus-algebra} support the piecewise linear aspect of tropical varieties by using logarithmic coordinates, the \emph{Berkovich model} is a simpler object from an algebraic perspective. It is this latter model of $\overline\R$ that we employ in our text. Its underlying set is $\R_{\geq0}$, its multiplication is the usual multiplication of real numbers and its addition is defined by the rule $a+b=\max\{a,b\}$ where the maximum is taken with respect to the usual linear ordering of the real numbers.

The \emph{tropical hyperfield} $\bT$ has the same underlying set $\R_{\geq0}$ and the same multiplication as $\overline\R$, but the addition of $\overline\R$ gets replaced by the \emph{hyperaddition} that associates with two elements $a,b\in\bT$ the following \emph{subset} $a\hyperplus b$ of $\bT$: 
\[
 a\hyperplus b \ = \ \begin{cases}
                      \big\{\max\{a,b\}\big\} &\text{if }a\neq b, \\
                      [0,a]                   &\text{if }a=b.
                     \end{cases}
\]


\subsection*{Advantages of the tropical hyperfield}
In the following, we will list a number of advantages of the tropical hyperfield $\bT$ over the tropical semifield $\overline\R$, which shall underline the potential of $\bT$ for tropical scheme theory.

\begin{advantage}
 The hyperaddition of $\bT$ is characterized by the property that for a field $k$, the strict triangular inequality $v(a+b)\leq\max\{v(a),v(b)\}$ of a nonarchimedean absolute value $v:k\to \R_{\geq0}$ is equivalent with the condition $v(a+b)\in v(a)\hyperplus v(b)$. This allows us to consider nonarchimedean absolute values as morphisms in a suitable category (Theorem \ref{thm: nonarchimedean seminoms as morphisms}). Thus we can consider the base change of a variety over $k$ to $\bT$.
\end{advantage}

\begin{advantage}
 The hyperaddition of $\bT$ provides us with a notion of additive inverses: for every element $a\in\bT$, there is a unique element $b\in\bT$ such that $0\in a\hyperplus b$, namely $b=a$. More generally, we have $0\in a_1\hyperplus\dotsb\hyperplus a_n$ if and only if the maximum occurs twice among the summands $a_i$ (Lemma \ref{lemma: characterization of monomial relations in the tropical hyperfield}). This allows us to reformulate the corner locus of a tropical polynomial $p$ as the set of points $x$ such that $0\in p(x)$ (Remark \ref{rem: corner locus as zero set using the tropical hyperfield}).
\end{advantage}

\begin{advantage}
 A tropical linear space is the geometric realization of a valuated matroid or a $\bT$-matroid in the language of \cite{Baker-Bowler19}. Therefore it seems natural to use $\bT$ as a basis for tropical geometry. In particular, the moduli space of tropical linear spaces is an object defined over $\bT$, as explained in \cite{Baker-Lorscheid18}.
\end{advantage}

\begin{advantage}
 In contrast to the polynomial ring $\overline\R[T_1,\dotsc,T_n]$, the ambient semiring of the free algebra $\bT[T_1,\dotsc,T_n]$ over $\bT$ is a domain, i.e.\ the multiplication by nonzero elements defines an injective map. This might be helpful to establish a theory of prime ideals and to formulate a criterion for the irreducibility of a tropical scheme. For some details on $\bT[T]$, cf.\ \cite[Appendix A]{Baker-Lorscheid18b}.
\end{advantage}


\subsection*{Ordered blueprints}
While hyperfields and hyperrings are coming short of certain properties that are required for tropical geometry, such as free objects and tensor products, the more ample notion of ordered blueprints has proven to be a suitable tool for tropical scheme theory (cf.\ \cite{Baker-Lorscheid18} and \cite{Lorscheid15}). Therefore we will refrain from spelling out the axiomatic of hyperfields, but we rather consider $\bT$, along with other algebraic objects of interest, as ordered blueprints.

In this text, all semirings are commutative with $0$ and $1$. An \emph{ordered blueprint} is a triple $B=(B^\bullet, B^+,\leq)$ where $B^+$ is a semiring, $B^\bullet$ is a multiplicatively closed set of generators of $B^+$ that contains $0$ and $1$ and $\leq$ is a partial order on $B^+$ that is \emph{additive} and \emph{multiplicative}, i.e.\ $x\leq y$ implies $x+z\leq y+z$ and $xz\leq yz$ for all $x,y,z\in B^+$. We say that $\leq$ is \emph{generated} by a set of relations $\{x_i\leq y_i\}$ if it is the smallest additive and multiplicative partial order that contains the relations $x_i\leq y_i$.

We can realize the tropical hyperfield $\bT$ as the following ordered blueprint $(\bT^\bullet,\bT^+,\leq)$: its \emph{underlying monoid} $\bT^\bullet$ is the set $\R_{\geq0}$ of nonnegative real numbers together with the usual multiplication, its \emph{ambient semiring} $\bT^+$ is the monoid semiring $\N[\R_{>0}]$ of finite formal sums $\sum a_i$ of positive real numbers $a_i$, and its partial order $\leq$ is generated by the relations $c\leq a+b$ for which $c\in a\hyperplus b$. 

Let $B$ and $C$ be ordered blueprints. A morphism $f:B\to C$ is a map $f^\bullet:B^\bullet\to C^\bullet$ with $f(0)=0$, $f(1)=1$ and $f(ab)=f(a)f(b)$ for all $a,b\in B^\bullet$ that extends (necessarily uniquely) to an order-preserving semiring homomorphism $f^+:B^+\to C^+$. This defines the category $\OBlpr$ of ordered blueprints.


\subsection*{Valuations as morphisms}

Let $k$ be a field and $v:k\to\R_{\geq0}$ a nonarchimedean absolute value, i.e.\ $v(0)=0$, $v(1)=1$, $v(ab)=v(a)v(b)$ and $v(c)\leq\max\{v(a),v(b)\}$ whenever $c=a+b$. 

Using the identification $\R_{\geq0}=\bT^\bullet$, the relation $v(c)\leq\max\{v(a),v(b)\}$ is equivalent with $v(c)\leq v(a)+v(b)$ where $\leq$ is the partial order of $\bT$. 

We associate with $k$ the ordered blueprint $\bk=(\bk^\bullet,\bk^+,\leq)$ where $\bk^\bullet$ is the multiplicative monoid of $k$, $\bk^+=\N[k^\times]$ is the group semiring generated by $k^\times$ and $\leq$ is generated by all relations $c\leq a+b$ (considered as elements of $\bk^+$) for which $c=a+b$ in $k$. 

Under these identifications, $v:k\to\R_{\geq0}$ defines a multiplicative map $\bv^\bullet:\bk^\bullet\to \bT^\bullet$ that extends to an order-preserving semiring homomorphism $\bv^+:\bk^+\to\bT^+$. In other words, the nonarchimedean absolute value $v:k\to\R_{\geq0}$ corresponds to a morphism $\bv:\bk\to\bT$ of ordered blueprints (Theorem \ref{thm: nonarchimedean seminoms as morphisms}).


\subsection*{Tropicalization as a base change}

For an ordered blueprint $B$, we denote by $\Spec B$ the corresponding object of the dual category of $\OBlpr$ and call it an \emph{affine ordered blue scheme}. Note that the category $\OBlpr$ contains tensor products $C\otimes_BD$, this is, the colimits of diagrams of the form $C\leftarrow B\to D$.

Given a morphism $\bk\to B$ of ordered blueprints, we define the \emph{scheme theoretic tropicalization of $\Spec B$ along $\bv:\bk\to\bT$} as $\Spec\big(B\otimes_{\bk}\bT\big)$, which can be thought of as the base change of $\Spec B$ from $\bk$ to $\bT$. Note that the tensor product comes with a canonical morphism $\bT\to B\otimes_{\bk}\bT$.


\subsection*{The Kajiwara-Payne tropicalization}

Let $X$ be an affine $k$-scheme with coordinate ring $R$. While the \emph{Berkovich analytification of $X$} is defined intrinsically as the set $X^\an$ of seminorms $w:R\to\R_{\geq0}$ that extend $v$ to $R$, the tropicalization of $X$ requires an additional choice of coordinates, e.g.\ in form of a closed immersion $\iota:X\to T$ into an affine toric variety $T=\Spec k[A]$ over $k$. Pulling back global sections defines a map $\iota^\sharp_A:A\to R$. The \emph{Kajiwara-Payne tropicalization of $X$} is defined as the image $X^\trop=\trop(X^\an)$ of the map
\[
 \begin{array}{cccc}
  \trop: & X^\an            & \longrightarrow & \Hom(A,\R_{\geq0}). \\
         & w:R\to\R_{\geq0} & \longmapsto     & w\circ\iota^\sharp_A
 \end{array}
\]
See Remark \ref{rem: the Kajiwara-Payne tropicalization map is surjective} for the relation with more common definition in terms of the bend locus.


\subsection*{The Kajiwara-Payne tropicalization as a rational point set}

Jun observes in \cite{Jun17} that the Berkovich analytification of $X$ corresponds to the hyperring morphism from $R$, considered \emph{as a hyperring}, into the tropical hyperfield $\bT$. We transfer this approach to ordered blueprints, which allows us to recover both the analytification and the tropicalization of $X=\Spec R$ as $\bT$-rational point sets of the following scheme theoretic tropicalizations. 

We associate with $R$ the following ordered blueprint $\bR$. Its underlying monoid $\bR^\bullet$ is the multiplicative monoid of $R$. Its ambient semiring $\bR^+$ is the monoid semiring $\N[\bR^\bullet]$ modulo the identification of $0\in \bR^\bullet$ with the empty sum. Its partial order is generated by the relation $c\leq a+b$ (considered as elements of $\bR^+$) for which $c=a+b$ in $R$. We define $\bX=\Spec \bR$. 

Let $\iota^\sharp:k[A]\to R$ be the surjection that pulls back global sections along the closed immersion $\iota:X\to T$. Let $B$ be the following ordered blueprint. Its underlying monoid $B^\bullet$ is the submonoid $\{\iota^\sharp(ra)|r\in k,a\in A\}$ of $R$. Its ambient semiring $B^+$ is the monoid semiring $\N[B^\bullet]$ modulo the identification of $0\in B^\bullet$ with the empty sum. Its partial order $\leq$ is generated by the relation $c\leq a+b$ (with $a,b,c\in B^\bullet$) for which $c=a+b$ in $R$. We define $Y=\Spec B$. 

Note that the inclusion $B\to \bR$ induces a $\bT$-linear morphism $f:B\otimes_{\bk}\bT\to \bR\otimes_{\bk}\bT$. We define the sets of $\bT$-linear morphisms
\[
 \Trop_\bv(\bX)(\bT) \ = \ \Hom_\bT(\bR\otimes_{\bk}\bT,\bT) \quad \text{and} \quad \Trop_\bv(Y)(\bT) \ =\ \Hom_\bT(B\otimes_{\bk}\bT,\bT).
\]
Composing with $f$ defines a map $f^\ast:\Trop_\bv(\bX)(\bT) \to \Trop_\bv(Y)(\bT)$.

\begin{thmA}\label{thmA}
 There are natural bijections $X^\an\to\Trop_\bv(X)(\bT)$ and $X^\trop\to\Trop_\bv(Y)(\bT)$ such that the diagram
 \[
  \begin{tikzcd}[column sep=3cm]
   X^\an \ar{r}{\trop} \ar{d} & X^\trop \ar{d} \\
   \Trop_\bv(\bX)(\bT) \ar{r}{f^\ast} & \Trop_\bv(Y)(\bT)
  \end{tikzcd}
 \]
 commutes. 
\end{thmA}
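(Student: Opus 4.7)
The plan is to apply the tensor-hom adjunction, giving $\Trop_\bv(\bX)(\bT)\cong\Hom_\bk(\bR,\bT)$ and $\Trop_\bv(Y)(\bT)\cong\Hom_\bk(B,\bT)$, so that $f^\ast$ becomes restriction along the inclusion $B\hookrightarrow\bR$. It then remains to construct natural bijections $X^\an\cong\Hom_\bk(\bR,\bT)$ and $X^\trop\cong\Hom_\bk(B,\bT)$ compatible with $\trop$ and restriction.

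For the first bijection, I would unpack what a $\bk$-linear morphism $\bR\to\bT$ is: a multiplicative map $w\colon R\to\R_{\geq 0}$ fixing $0$ and $1$, extending $v$, and order-preserving in the sense that $w(c)\leq w(a)+w(b)$ in $\bT^+$ whenever $c=a+b$ in $R$. Using the generators of the partial order on $\bT$, this order condition reads $w(a+b)\in w(a)\hyperplus w(b)$, which by Theorem~\ref{thm: nonarchimedean seminoms as morphisms} is the nonarchimedean triangle inequality. Hence $\Hom_\bk(\bR,\bT)$ identifies with the set $X^\an$ of nonarchimedean seminorms on $R$ extending $v$.

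For the second bijection, the identity $B^\bullet=\iota^\sharp(k\cdot A)$ implies that a $\bk$-linear morphism $\phi\colon B\to\bT$ is determined by $u:=\phi\circ\iota^\sharp_A\in\Hom(A,\R_{\geq 0})$ via the formula $\phi(\iota^\sharp(ra))=v(r)\,u(a)$, yielding an injection $\Hom_\bk(B,\bT)\hookrightarrow\Hom(A,\R_{\geq 0})$. Conversely, given $u=\trop(w)\in X^\trop$, the restriction $w\vert_{B^\bullet}$ defines a $\bk$-linear morphism $B\to\bT$ mapping to $u$, well-defined independently of the lift $w$ by the above formula. Commutativity of the diagram then follows by construction, since $f^\ast$ restricts a morphism $\bR\to\bT$ to $B\to\bT$, and precomposing with $\iota^\sharp_A$ recovers $w\circ\iota^\sharp_A=\trop(w)$.

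The main obstacle is showing that the injection $\Hom_\bk(B,\bT)\hookrightarrow\Hom(A,\R_{\geq 0})$ lands in $X^\trop$: given an arbitrary $\bk$-linear $\phi\colon B\to\bT$, one must produce a seminorm $w\colon R\to\R_{\geq 0}$ extending $v$ with $w\vert_{B^\bullet}=\phi$. This is an extension theorem whose content is that the order relations on $B$, which encode bend-type constraints coming from $\ker(\iota^\sharp)$, suffice to produce such a seminorm. A natural approach is to define $w(r)=\inf\{\max_i\phi(b_i)\}$ over decompositions $r=\sum_i b_i$ in $R$ with $b_i\in B^\bullet$, and verify multiplicativity and the nonarchimedean inequality using the order compatibility of $\phi$; alternatively, one may invoke a Berkovich-style surjectivity of the tropicalization map combined with the universal characterization from Theorem~\ref{thm: nonarchimedean seminoms as morphisms}.
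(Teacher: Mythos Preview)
Your tensor--hom reduction and the identification $X^\an\cong\Hom_\bk(\bR,\bT)$ are correct and match the paper (this is Corollary~\ref{cor: k-linear seminorms as morphisms}). The divergence is in the second bijection, and it comes down to which definition of $X^\trop$ is in play.

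In the introduction $X^\trop$ is described as the image of $\trop$, but the actual proof (Theorem~\ref{thm: recovering the KP tropicalization from the scheme theoretic tropicalization}) uses the definition from section~\ref{subsection: the Kajiwara-Payne tropicalization} as the bend locus $\bigcap_{p\in I^\trop}Z(p)$. With that definition your ``main obstacle'' dissolves into a direct check: given $\phi\in\Hom_\bk(B,\bT)$ and $p=\sum c_a a\in I$, the relation $0\leq\sum c_a a$ holds in $B$, hence $0\leq\sum v(c_a)\,u(a)$ in $\bT$, and Lemma~\ref{lemma: characterization of monomial relations in the tropical hyperfield} gives $u\in Z(p^\trop)$. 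No seminorm lift is required. Conversely, for $u$ in the bend locus the paper verifies directly that $ca\otimes t\mapsto v(c)u(a)t$ is order-preserving on $B\otimes_\bk\bT$, again via Lemma~\ref{lemma: characterization of monomial relations in the tropical hyperfield}. You instead restrict a seminorm preimage $w$, which presupposes surjectivity of $\trop$ onto the bend locus---a separate nontrivial fact (Remark~\ref{rem: the Kajiwara-Payne tropicalization map is surjective}) that the paper's argument avoids entirely.

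Your proposed extension $w(r)=\inf\{\max_i\phi(b_i)\}$ over decompositions $r=\sum b_i$ is unlikely to succeed as written: infimum constructions of this type generally produce only submultiplicative seminorms, and multiplicativity is exactly what is needed to land in $X^\an$. The ``alternative'' you mention---invoking Berkovich-style surjectivity---is precisely the statement that the image and bend-locus definitions of $X^\trop$ agree, so it imports the content you are trying to prove rather than supplying it. Switching to the bend-locus definition and arguing both inclusions order-theoretically, as the paper does, is the clean fix.
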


In fact, the bijections in this statement are homeomorphisms with respect to topologies that stem from the Euclidean topology of $\bT=\R_{\geq0}$; cf.\ section \ref{subsection: topology for rational point sets} for details.


\subsection*{The Giansiracusa bend}

Jeff and Noah Giansiracusa introduce in \cite{Giansiracusa-Giansiracusa16} the bend relation for tropical polynomials, which allows them to prove an analogous result to Theorem \ref{thmA} for scheme theoretic tropicalizations over the tropical semifield $\overline\R$. The following result shows that the scheme theoretic tropicalization over $\bT$ recovers the Giansiracusa bend in a natural way.

Recall the context of the Kajiwara-Payne tropicalization: $k$ is a field with nonarchimedean absolute value $v:k\to\R_{\geq0}$ and $X=\Spec R$ is a $k$-scheme together with a closed immersion $\iota:X\to T$ into a toric $k$-variety of the form $T=\Spec k[A]$.

The \emph{Giansiracusa bend $\Bend_{v,\iota}^{GG}(X)$ of $X$ (along $v$ with respect to $\iota$)} is defined as the spectrum of the quotient of the free $\overline\R$-algebra $\overline\R[A]$ of finite $\overline\R$-linear combinations $\sum t_aa$ of elements $a\in A$ by the relations of the form 
\[\textstyle
 v(c_a) a+\sum v(c_j) b_j \ = \ \sum v(c_j) b_j
\]
for $c_a,c_j\in k$ and $a, b_j\in A$ with $c_a a=\sum c_j b_j$ in $R$. 

We consider the \emph{field with one element} as the ordered blueprint $\Fun=\big(\{0,1\},\N,=\big)$ and the \emph{Boolean semifield} as the ordered blueprint $\B=\big(\{0,1\},\{0,1\},=\big)$ where ``$=$'' stand for the trivial partial order and the addition of $\B^+=\{0,1\}$ is characterized by the equation $1+1=1$.

\begin{thmA}\label{thmB}
 There is a canonical isomorphism $\Bend_{v,\iota}^{GG}(R) \to (\Trop_\bv(B)\otimes_\Fun\B)^+$.
\end{thmA}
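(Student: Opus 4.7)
The plan is to compute the ordered blueprint $\Trop_\bv(B)\otimes_\Fun\B$ explicitly and match its ambient semiring with the Giansiracusa bend construction. The strategy has two main conceptual ingredients: first, to identify $(\bT\otimes_\Fun\B)^+$ with the tropical semifield $\overline\R$, so that the scalar semifield in $\overline\R[A]$ appears naturally; second, to show that the partial-order inequalities in $\Trop_\bv(B)$ become genuine semiring equations after tensoring with $\B$, producing exactly the bend relations.

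The key technical lemma I would establish first is this: in any ordered blueprint $C$ in which $0\leq 1$ holds in $C^+$, every partial-order relation $x\leq y$ translates into the equation $x+y=y$ in the ambient semiring of $C\otimes_\Fun\B$. Indeed, $0\leq 1$ propagates multiplicatively to $0\leq x$ for all $x$, hence additively to $y=0+y\leq x+y$; on the other hand $x\leq y$ combined with idempotence $y+y=y$ yields $x+y\leq y+y=y$, and antisymmetry forces $x+y=y$. Since $\B$ itself satisfies $0\leq 1+1=1$, this hypothesis is automatic after $\otimes_\Fun\B$. Applied to $\bT$, the defining relation $s\leq t+t$ for $s\in[0,t]$ collapses to $s\leq t$, recovering the usual linear order on $\R_{\geq 0}$ and the max-plus rule $s+t=\max\{s,t\}$; this identifies $(\bT\otimes_\Fun\B)^+$ with $\overline\R$.

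With $\overline\R$ in place as the coefficient semifield, I would describe $(B\otimes_\bk\bT)^\bullet$ as classes $[v(r)\cdot a]$ for $r\in k$ and $a\in A$, where two classes coincide when the corresponding elements $r\iota^\sharp(a)\in R$ do. Together with the $\overline\R$-action, this gives a canonical semiring map $\varphi\colon\overline\R[A]\to(\Trop_\bv(B)\otimes_\Fun\B)^+$. To show $\varphi$ factors through the bend ideal, I would take a relation $c_aa=\sum c_jb_j$ in $R$, derive the partial-order inequality $c_aa\leq\sum c_jb_j$ inside $B^+$ (using the 2-term generators $c\leq s+t$ whenever $c=s+t$ in $R$ with $s,t,c\in B^\bullet$, together with closure under additivity, multiplicativity, and transitivity), base-change it to $(B\otimes_\bk\bT)^+$, and invoke the lemma above to obtain $v(c_a)\,a+\sum v(c_j)\,b_j=\sum v(c_j)\,b_j$. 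Conversely, the positive part of $\Trop_\bv(B)\otimes_\Fun\B$ is generated as a semiring by the images of $A$ and $\overline\R$, so $\varphi$ is surjective; injectivity amounts to checking that all defining identifications of the target (both the monoid identifications in $B^\bullet$ and the partial-order relations of $B$ and $\bT$) are already imposed among the bend relations, which I can verify relation-by-relation using the same lemma.

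The principal obstacle is the derivation of the $n$-term inequality $c_aa\leq\sum_{j=1}^n c_jb_j$ in $B^+$ from the 2-term generators, for $n\geq 3$, since the natural intermediate sums $\sum_{j\leq i}c_jb_j$ need not lie in $B^\bullet$. I expect to handle this by working not in $B$ alone but in $B\otimes_\bk\bT$, where the hyperadditive structure of $\bT$ supplies additional partial-order relations among arbitrary formal $\bT$-linear combinations (via $\max\in s_1\hyperplus\dotsb\hyperplus s_n$ whenever the maximum is unique, and its iterated variants), and combining these with the 2-term relations of $B$ under additivity produces the full $n$-term inequalities. Making this interplay between the two partial orders in the pushout precise is where the real work lies.
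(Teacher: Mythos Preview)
Your overall strategy coincides with the paper's: identify $(\bT\otimes_\Fun\B)^+$ with $\overline\R$ (this is the content of Proposition~\ref{prop: projection from the tropical hyperfield to the totally positive tropical semifield}), use the passage from $x\leq y$ to $x+y=y$ under idempotence and total positivity (this is Lemma~\ref{lemma: an idempotent blueprint is isomorphic to the core of the associated totally positive ordered blueprint}), and then match relations. The map you build and the verification you outline are exactly what the paper does in the proof of Theorem~\ref{thm: recovering the bend relation}.

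Your ``principal obstacle,'' however, is a phantom created by reading the 2-term description of $B$ in the introduction too literally. In the body of the paper (sections~\ref{subsection: analytification and tropicalization as rational point sets} and~\ref{subsection: recovering the bend from the scheme theoretic tropicalization}) the partial order of $B$ is defined by pulling back relations from $\bR=R^\mon$: one imposes $c_aa\leq\sum_j c_jb_j$ whenever $\pi(c_aa)\leq\sum_j\pi(c_jb_j)$ holds in $\bR$. Since $\bR^\bullet=R$ is a ring, every partial sum $\sum_{j\leq i}\iota^\sharp(c_jb_j)$ lies in $\bR^\bullet$, so the $n$-term monomial relation in $\bR$ follows from the 2-term generators by chaining through these intermediate sums. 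Consequently the $n$-term relation $c_aa\leq\sum_j c_jb_j$ is \emph{directly among the generators} of the partial order of $B$, and no detour through the hyperaddition of $\bT$ is needed. Your proposed workaround is therefore unnecessary.

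One small correction: you write that ``$\B$ itself satisfies $0\leq 1+1=1$.'' The partial order on $\B$ is trivial, so $0\leq 1$ does not hold there. The relation $0\leq 1$ in $\Trop_\bv(B)\otimes_\Fun\B$ comes instead from $\bT$: one has $0\leq 1+1$ in $\bT$ (since $0\in 1\hyperplus 1$), and the idempotence imposed by $\B$ then collapses $1+1$ to $1$.
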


In fact, we prove a stronger version of Theorem \ref{thmB}, in which we express the refinement of $\Bend_{v,\iota}(R)$ as a blueprint in terms of $\Trop_\bv(B)$. While this latter result can be found as Theorem \ref{thm: recovering the bend relation} in this text, Theorem \ref{thmB} appears as Corollary \ref{cor: the Giansiracusa bend from the scheme theoretic tropicalization}.


\subsection*{The guiding example}

We illustrate the main concepts and results of this paper at the appropriate positions in the case of the standard plane line defined by the polynomial $T_1+T_2+1$. We summarize these explanations in the following in order to exemplify Theorems \ref{thmA} and \ref{thmB}.

Let $k$ be a field with nonarchimedean absolute value $v:k\to\R_{\geq0}$. Let $X$ be the closed subscheme of the affine plane $\A^2_k$ over $k$ that is defined by the polynomial $T_1+T_2+1$, which comes with a closed immersion $\iota:X\to\A^2_k$ and coordinate ring $R=k[T_1,T_2]/(T_1+T_2+1)$. 

\subsubsection*{The set theoretic tropicalization}
The tropicalization of $X$ is 
\[
 X^\trop \ = \ \big\{ \, (a_1,a_2)\in \R_{\geq0}^2 \,\big| \, \text{the maximum among $a_1$, $a_2$ and $1$ occurs twice} \, \big\},
\]
which is also called the \emph{bend locus of $T_1+T_2+1$}; cf.\ section \ref{subsection: the Kajiwara-Payne tropicalization} for details and Example \ref{ex: topologies of tropical plane and tropical plane line} for an illustration.

\subsubsection*{The scheme theoretic tropicalization}
We turn to a description of the scheme theoretic tropicalization of $X$ with respect to its embedding into the affine plane over $k$. Let $\bv:\bk\to\bT$ be the morphism associated with $v:k\to\R_{\geq0}$, cf.\ Theorem \ref{thm: nonarchimedean seminoms as morphisms}. The associated ordered blueprint $B$ is as follows: its ambient semiring is the polynomial algebra
\[
 B^+ \ = \ \big(\N[k^\times]\big)\, [T_1,T_2]
\]
where $\N[k^\times]$ is the group semiring of finite formal sums of elements of $k^\times$. Its underlying monoid consists of all terms of the form $cT^{e_1}T^{e_2}$ where $c\in k$ and $e_1,e_2\in\N$. Its partial order is generated by the relations
\[
 0 \ \leq \ T_1+T_2+1, \quad -T_1 \ \leq \ T_2+1, \quad -T_2 \ \leq \ T_1+1 \quad \text{and} \quad -1 \ \leq \ T_1+T_2.
\]
By Lemma \ref{lemma: the base change to the tropical hyperfield as quotient of a free algebra}, the tropicalization $\Trop_\bv(B)=B\otimes_\bk\bT$ of $B$ has the following explicit description: the association $cT_1^{e_1}T_2^{e_2}\otimes t\mapsto v(c)tT_1^{e_1}T_2^{e_2}$ defines an isomorphism
\[
 \Trop_\bv(B)^+ \ \simeq \ \big(\N[\R_{>0}] \big)\, [T_1,T_2],
\]
of semirings that identifies the underlying monoid of $\Trop_\bv(B)$ with the submonoid of $\N[\R_{>0}][T_1,T_2]$ that consists of all terms of the form $tT_1^{e_1}T_2^{e_2}$ with $t\in\R_{\geq0}$ and $e_1,e_2\in\N$. The partial order of $\Trop_\bv(B)$ coincides with the partial order of $\N[\R_{>0}][T_1,T_2]$ that is generated by the defining relations of the partial order of $\bT^+=\R_{\geq0}\subset\N[\R_{>0}]$ together with the relations
\[
 0 \ \leq \ T_1+T_2+1, \quad  T_1 \ \leq \ T_2+1, \quad  T_2 \ \leq \ T_1+1\quad \text{and}\quad  1 \ \leq \ T_1+T_2,
\]
cf.\ Example \ref{ex: the Kajiwara-Payne tropicalization from the scheme theoretic tropicalization} for details.

\subsubsection*{Recovering the set theoretic tropicalization}
Theorem \ref{thmA} asserts that the tropicalization $X^\trop$ equals the set of $\bT$-linear homomorphisms $f:\Trop_\bv(B)\to\bT$. Mapping $f$ to $\big(f(T_1),f(T_2)\big)$ defines a bijection of $\Hom_\bT(\Trop_\bv(B),\bT)$ with 
\[
 \big\{\, (a_1,a_2)\in\bT^2 \, \big| \, 0\leq a_1+a_2+1,\, a_1 \leq a_2+1,\, a_2 \leq a_1+1,\, 1 \leq a_1+a_2 \, \big\}.
\]
By Lemma \ref{lemma: characterization of monomial relations in the tropical hyperfield}, each of the four defining relations on $(a_1,a_2)$ is equivalent with the condition that the maximum among $a_1$, $a_2$ and $1$ occurs twice. Thus this latter set is precisely $X^\trop$, as claimed in Theorem \ref{thmA}.

\subsubsection*{The Giansiracusa bend}
We turn to Theorem \ref{thmB}, which exhibits the bend relations in terms of the scheme theoretic tropicalization. In our example, the Giansiracusa bend is the semiring
\[
 \Bend_{v,\iota}^{GG}(R) \ = \ \overline\R[T_1,T_2] \big/ \big\langle T_1+T_2+1\sim T_1+T_2\sim T_1+1\sim T_2+1 \big\rangle,
\]
cf.\ Example \ref{ex: the Giansiracusa bend}. On the other hand, we have the identifications
\[
 \big(\, \Trop_\bv(B)\otimes_\Fun\B\, \big)^+ \ = \ \Trop_\bv(B)^+\otimes_\N\B^+ \ = \ \Trop_\bv(B)^+ \, / \, \langle 1+1 \sim 1 \rangle,
\]
which show that $x+x=x$ holds for every $x$ in $(\Trop_\bv(B)\otimes_\Fun\B)^+$, cf.\ section \ref{subsection: idempotent ordered blueprints}. Under the identification $\Trop_\bv(B)^+\simeq\N[\R_{>0}][T_1,T_2]$, we obtain that 
\[
 T_1+T_2+1 \ \leq \ T_1 + T_2 + T_1+T_2 \ = \ T_1+T_2
\]
(using $1\leq T_1+T_2$) and that
\[
 T_1+T_2 \ = \ T_1+T_2+0 \ \leq \ T_1+T_2+1+1 \ = \ T_1+T_2+1
\]
(using $0\leq 1+1$) holds in $(\Trop_\bv(B)\otimes_\Fun\B)^+$. Thus we gain the equality $T_1+T_2+1=T_1+T_2$ in $(\Trop_\bv(B)^\idem)^+$. Repeating the same argument with the roles of $T_1$, $T_2$ and $1$ exchanged yields the defining relations
 \[
  T_1+T_2+1 \ = \ T_1+T_2 \ = \ T_1+1 \ = \ T_2+1
 \]
 of $\Bend_{v,\iota}^{GG}(R)$. This illustrates Theorem \ref{thmB} in our example.


\subsection*{Remark on variations}

Let $\bk$ be as before. Whenever we have a morphism $v:\bk\to C$ into some ordered blueprint $C$, we can consider the base change $\Spec\big(B\times_\bk C\big)$ of an affine ordered blue $\bk$-scheme $Y=\Spec B$ to $C$ along this morphism, which should be thought of as the scheme theoretic tropicalization of $Y$ over $C$.

Tropicalizations along the following morphisms $v:\bk\to C$ might produce interesting theories. First of all, we can consider higher rank valuations $v:k\to \R_{\geq0}^n$ (where we use the exponential notation) as a morphism $v:\bk\to\bT^{(n)}$ where $\bT^{(n)}$ is the ordered blueprint with underlying monoid $A=\R_{>0}^n\cup\{0\}$, with ambient semiring $\N[\R^n_{>0}]$ and with the partial order that is generated by relations of the form
\[
 (c_1,\dotsc,c_n) \ \leq \ (a_1,\dotsc,a_n) + (b_1,\dotsc,b_n)
\]
for which there is an $i\in\{1,\dotsc,n\}$ such that $a_j=b_j=c_j$ for $j<i$ and such that the maximum among $a_i$, $b_i$ and $c_i$ appears twice.

Another interesting example is the sign map $\R\to\{0,\pm1\}$, which can be interpreted as a morphism $\sign:\bk\to\bS$ where $\bk$ is the ordered blueprint associated with $k=\R$ and where $\bS$ is the \emph{sign hyperfield}, which has the following shape as an ordered blueprint. Its underlying monoid is $\bS^\bullet=\{0,\pm1\}$, its ambient semiring is $\bS^+=\N[1,-1]$ (where $-1$ has to be understood as a symbol and not as an additive inverse of $1$) and its partial order is generated by the relations
\[
 0 \ \leq \ 1+(-1) \qquad 1 \ \leq \ 1+(-1) \qquad \text{and} \qquad 1\ \leq \ 1+1.
\]
Tropicalizations along $\sign:\bk\to\bS$ might be useful to study real algebraic varieties. In particular, this might bring new insights to questions around the (disproven) Macphersonian conjecture; cf.\ \cite{Mnev-Ziegler93} and \cite{Liu17}.

A variation of the sign map is the phase map $\C\to\S^1\cup\{0\}$, which assigns to a nonzero complex number $z$ its argument $z/|z|$ on the unit circle $\S^1$. This map can be realized as a morphism $\bk\to\bP$ where $\bk$ is associated with $k=\C$ and where $\bP$ is the \emph{phase hyperfield}. We omit a description of $\bP$, but refer to section 2.1 in \cite{Anderson-Davis19} for details and further variations.


\subsection*{Divergence in notation}

In this text, we aim for a simplified account of scheme theoretic tropicalization, in contrast to the broader context of \cite{Lorscheid15}. The specific situation of this paper---namely, the restriction to affine ordered blue schemes and the fixed tropicalization base $\bT$---allows us to simplify the exposition of our results considerably. In the following, we point out the major differences to \cite{Baker-Lorscheid18}, \cite{Lorscheid15} and \cite{Lorscheid18} in order to avoid confusion when comparing these writings.

Most notably, we are using different incarnations of the tropical numbers in this text, whose notations deviate from that used in \cite{Baker-Lorscheid18}, \cite{Lorscheid15} and \cite{Lorscheid18}. Namely, in \cite{Lorscheid15} and \cite{Lorscheid18} the symbol $\T$ is used for both the tropical semifield, which is denoted by $\overline\R$ in this text, and the associated ordered blueprint $\T=(\overline\R,\overline\R,=)$, which agrees with the notation in this text. In \cite{Baker-Lorscheid18}, we use $\T$ for the tropical hyperfield, which is denoted by $\bT$ in this text and by $\T^\hyp$  in \cite{Lorscheid18}.

The reason for us to use in \cite{Lorscheid15} and \cite{Lorscheid18} the symbol $\T$ for both the tropical semifield and the associated blueprint is that we identify a semiring $R$ with the associated blueprint $B=(R,R,=)$. Since for this text a different realization of semiring as ordered blueprints stays in the foreground, we make a clear distinction between these different objects.

This distinction has the advantage that there is no ambiguity between the free semiring $R[T]$ over a semiring $R$, whose elements are polynomials, and the free ordered blueprint $B[T]$ over the associated ordered blueprint $B=(R,R,=)$, whose elements are monomials with coefficients in $B$.


\subsection*{Content overview}
In section \ref{section: ordered blueprints}, we review the definition of and some basic facts for ordered blueprints. In section \ref{section: tropicalization as a base change to the tropical hyperfield}, we introduce scheme theoretic tropicalizations, which includes the interpretation of absolute values as morphisms of ordered blueprints, as well as some results on the natural topology for $\bT$-rational point sets. In section \ref{section: recovering the Kajiwara-Payne tropicalization}, we exhibit the Kajiwara-Payne tropicalization as the $\bT$-rational point set of a scheme theoretic tropicalization. In section \ref{section: the relation between the tropical hyperfield and the tropical semifield}, we explain the relation between the tropical hyperfield and the tropical semifield. In section \ref{section: recovering the bend}, we recover the Giansiracusa bend from the scheme theoretic tropicalization.


\subsection*{Acknowledgements}

The author thanks Matt Baker and Sam Payne for their help with preparing this text.


\section{Ordered blueprints}
\label{section: ordered blueprints}

In this section, we introduce ordered blueprints and some basic constructions such as free algebras, quotients and tensor products. We will explain how we can consider monoids and semirings as ordered blueprints and finally introduce the tropical hyperfield in its incarnation as an ordered blueprint. For more details on ordered blueprints we refer to \cite{Baker-Lorscheid18}, \cite{Lorscheid15} and \cite{Lorscheid18}.


\subsection{Basic definitions}
\label{subsection: basic definitions}

In this text, a \emph{semiring} is always commutative and with $0$ and $1$, i.e.\ both $(R,+,0)$ and $(R,\cdot,1)$ are commutative monoids, multiplication distributes over addition and $0\cdot a=0$ for all $a\in R$.

An \emph{ordered blueprint} is a triple $B=(B^\bullet,B^+,\leq)$ where $B^+$ is a semiring, $B^\bullet$ is a subset of $B^+$ and $\leq$ is a partial order on $B^+$ such that
\begin{enumerate}
 \item $B^\bullet$ is closed under multiplication, contains $0$ and $1$ and generates $B^+$ as a semiring;
 \item $\leq$ is \emph{additive} and \emph{multiplicative}, i.e.\ $x\leq y$ implies $x+z\leq y+z$ and $xz\leq yz$ for all $x,y,z\in B^+$. 
\end{enumerate}
We call $B^\bullet$ the \emph{underlying monoid}, $B^+$ the \emph{ambient semiring} and $\leq$ the \emph{partial order} of the ordered blueprint $B$. 

We typically denote the elements of $B^\bullet$ by $a$, with $b$, $c$ and $d$, and the elements of $B^+$ by $x$, $y$, $z$ and $t$ or $\sum a_i$, $\sum b_j$, $\sum c_k$ and $\sum d_l$ where we assume that the $a_i$, $b_j$, $c_k$ and $d_l$ are elements of $B^\bullet$. Note that every element of $B^+$ is indeed a sum of elements in $B^\bullet$. 

We consider $B^\bullet$ as the underlying set of the ordered blueprint $B$, and we say that $a$ is an element of $B$ if $a\in B^\bullet$.

A \emph{morphism of ordered blueprints $f:B_1\to B_2$} is a multiplicative map $f^\bullet:B_1^\bullet \to B_2^\bullet$ with $f(0)=0$ and $f(1)=1$ that extends to an order preserving semiring homomorphism $f^+:B_1^+\to B_2^+$. Note that $f^+$ is uniquely determined by $f^\bullet$ since $B_1^\bullet$ generates $B_1^+$ as a semiring. This defines the category $\OBlpr$ of ordered blueprints. 

In the following, we will introduce several constructions and subclasses of ordered blueprints as well as several explicit examples of ordered blueprints.


\subsection{Free algebras}
\label{subsection: free algebras}

Let $k$ be an ordered blueprint. An \emph{ordered blue $k$-algebra} is an ordered blueprint $B$ together with a morphism $k\to B$, which we call the \emph{structure map of $B$}. We often refer to an ordered blue $k$-algebra by $B$ without mentioning the structure map explicitly. A $k$-linear morphism between two ordered blue $k$-algebras $B$ and $C$ is a morphism $f:B\to C$ of ordered blueprints that commutes with the structure maps of $B$ and $C$.

Let $B=(B^\bullet,B^+,\leq)$ be an ordered blueprint and $A$ be a commutative and multiplicatively written monoid. We define the \emph{free ordered blue $B$-algebra in $A$} as the following ordered blue $B$-algebra $B[A]$. Its ambient semiring is the semiring
\[\textstyle
 B[A]^+ \ = \ \big\{ \, \sum\limits_{a\in A} x_aa \, \big| \, x_a\in B^+\text{ and }x_a=0\text{ for almost all }a \, \big\}
\]
whose addition is defined componentwise and its multiplication extends the multiplication of $A$ linearly. We consider $A$ as a submonoid of $B[A]^+$ and we write $a$ for the element $\sum c_bb$ with $c_a=1$ and $c_b=0$ for $b\neq a$. In particular, we consider $1$ as an element of $A\subset \N[A]^+$.

The underlying monoid of $B[A]$ is defined as the subset
\[\textstyle
 B[A]^\bullet \ = \ \big\{ \, ca \in B[A]^+\, \big| \, c\in B^\bullet, a\in A \, \big\}
\]
and its partial order is generated by the relations of the form $x1\leq y1$ with $x,y\in B^+$ whenever $x\leq y$ in $B^+$.

The association $c\mapsto c1$ defines a morphism of ordered blueprints $B\to B[A]$, which endows $B[A]$ with the structure of an ordered blue $B$-algebra. It satisfies the universal property that every monoid morphism $A\to C^\bullet$ into the underlying monoid of an ordered blue $B$-algebra $C$ extends uniquely to a morphism $B[A]\to C$ of ordered blue $B$-algebras.

\begin{ex}
 In the case that $A=\{T_1^{e_1}\dotsb T_n^{e_n}|e_i\in\N\}$ consists of the monomials in $T_1,\dotsc, T_n$, we write $B[T_1,\dotsc,T_n]=B[A]$, which we think of as the ``polynomial algebra'' over $B$. Its ambient semiring is the usual polynomial semiring $B^+[T_1,\dotsc,T_n]$ and its underlying monoid consists of the monomials $cT_1^{e_1}\dotsb T_n^{e_n}$ with coefficient $c\in B^\bullet$.
 
 Another example of importance for tropical geometry are Laurent polynomial algebras, which are the free algebras associated with the group $A=\{T_1^{e_1}\dotsb T_n^{e_n}|e_i\in\Z\}$ of Laurent monomials. In this case, we write $B[T_1^{\pm1},\dotsc,T_n^{\pm1}]=B[A]$. Its ambient semiring is the usual semiring $B^+[T_1^{\pm1},\dotsc,T_n^{\pm1}]$ of Laurent polynomials and its underlying monoid consists of the Laurent monomials $cT_1^{e_1}\dotsb T_n^{e_n}$ with coefficient $c\in B^\bullet$.
\end{ex}


\subsection{Algebraic blueprints}
\label{subsection: algebraic blueprints}

Let $R$ be a semiring. The \emph{trivial partial order on $R$} is the partial order $\leq$ with $x\leq y$ only if $x=y$ for all $x,y\in R$. We will refer to the trivial partial order by $=$. An \emph{algebraic blueprint}, or simply \emph{blueprint}, is an ordered blueprint $B$ whose partial order is trivial. 

Let $B=(B^\bullet,B^+,\leq)$ be an ordered blueprint. Then its \emph{algebraic core} is the blueprint $B^\core=(B^\bullet,B^+,=)$ where we replace the partial order $\leq$ of $B$ by the trivial partial order $=$. The identity map on $B^\bullet$ defines a morphism $B^\core\to B$ of ordered blueprints.

\begin{ex}\label{ex: Boolean semifield}
 The Boolean semifield is the semiring $R=\{0,1\}$ whose multiplication is determined by the axioms for $0$ and $1$ and whose addition is determined by the rule $1+1=1$. We identify $R$ with the algebraic blueprint $\B=(\{0,1\},R,=)$ and call $\B$ by abuse of language the \emph{Boolean semifield}.
\end{ex}


\subsection{Monoids with zero}
\label{subsection: monoids with zero}

A \emph{monoid with zero} is a multiplicatively written commutative monoid $A$ with a distinguished element $0$, called the \emph{zero of $A$}, that satisfies $0\cdot a=0$ for all $a\in A$. We define $A^+=\N[A]^+/\sim$ as the quotient of $\N[A]^+$ by the relation that identifies the zero $0$ of $A\subset\N[A]^+$ with the additively neutral element $\sum 0a$ of $\N[A]^+$.

Note that every element of $A^+$ can uniquely written as a sum $\sum a_i$ of nonzero elements $a_i\in A$. In particular, the map $A\to \N[A]^+\to A^+$ embeds $A$ as a submonoid of $A^+$. Therefore $A^\alg=(A,A^+,=)$ is a blueprint, which we call the \emph{blueprint associated with $A$}.

This allows us to associate the algebraic blueprint $(A,A^+,=)$ with a monoid with zero $A$. 

Note further that the semiring $A^+$ satisfies the universal property that every multiplicative map $f:A\to R$ into a semiring $R$ with $f(0)=0$ and $f(1)=1$ extends uniquely to a semiring morphism $A^+\to R$. 

\begin{ex}\label{ex: the field with one element}
 The \emph{field with one element} is the ordered blueprint $\Fun=(\{0,1\},\N,=)$, which is associated with the monoid $\{0,1\}$. It is an initial object in $\OBlpr$, which means that there is a unique morphism $\Fun\to B$ into any other ordered blueprint $B$.
\end{ex}


\subsection{Semirings}
\label{subsection: semirings}

Let $R$ be a semiring. We denote the \emph{multiplicative monoid of $R$} by $R^\bullet$. The \emph{associated monomial ordered blueprint} is the ordered blueprint $R^\mon=(R^\bullet,(R^\bullet)^+,\leq)$ where $\leq$ is generated by the \emph{(left) monomial relations} $c\leq a+b$ for which $c=a+b$ in $R$. Note that the underlying set of $R^\mon$ is $R$ itself.

This association is functorial in the sense that a homomorphism $f:R_1\to R_2$ of semirings is tautologically a morphism between the associated monomial ordered blueprints, which we denote by $f^\mon:R_1^\mon\to R_2^\mon$. This embeds the category $\SRings$ of semirings as a full subcategory into $\OBlpr$.

We sometimes denote the associated monomial ordered blueprint by a boldface letter, e.g.\ by $\bR=R^\mon$ and $\bk=k^\mon$ where $k$ is typically a field.

Note that this construction differs from the realization of a semiring $R$ as the algebraic blueprint $(R,R,=)$ from \cite[section 2.10]{Lorscheid15}; we will encounter this latter construction, applied to $\overline\R$, in section \ref{subsection: the tropical semifield as an algebraic blueprint} of this text.

\begin{rem}\label{rem: hyperrings as ordered blueprints}
 At first sight, the definition of $R^\mon$ might seem unmotivated. To give some intuition, we explain its consistency with the association of rings with ordered blueprints passing through hyperrings. Namely, given a ring $R$, one defines a hyperaddition by the rule $a\hyperplus b=\{a+b\}$, which turns $R$ into a hyperring. 
 
 Given a hyperring $R$, the relation $c\in a\hyperplus b$ is not symmetric, but \emph{monomial} in the argument $c$ on the left hand side. This leads to the realization of the hyperring $R$ as the ordered blueprint $(R^\bullet, (R^\bullet)^+,\leq)$ where $R^\bullet$ is the multiplicative monoid of $R$ and $\leq$ is generated by the monomial relations $c\leq a+b$ for which $c\in a\hyperplus b$ in $R$. Also cf.\ Remark 2.8 in \cite{Lorscheid15}.
\end{rem}


\subsection{Quotients by relations}
\label{subsection: quotients by relations}

Given an ordered blueprint $B=(B^\bullet,B^+,\leq_B)$ and a set of relations $S=\{x_i\leq y_i\}_{i\in I}$ with $x_i,y_i\in B^+$, we define the ordered blueprint $C=\bpgenquot BS$ as the following triple $(C^\bullet,C^+,\leq_C)$. Let $\leq'$ be the smallest preorder on $B^+$ that contains $\leq_B$ and $S$ and that is closed under multiplication and addition. We write $x\=y$ if $x\leq y$ and $y\leq x$. Then $\=$ is an equivalence relation on $B^+$, and we define $C^+$ as $B^+/\=$, which inherits naturally the structure of an ordered blueprint since $\leq'$ is closed under multiplication and addition. The preorder $\leq'$ induces a partial order $\leq_C$ on $C^+$, which turns $C^+$ into an ordered semiring. The multiplicative subset $C^\bullet$ is defined as the image of $B^\bullet$ under the quotient map $B^+\to C^+$. 

The quotient $C=\bpgenquot BS$ comes with a canonical morphism $\pi:B\to C$ that satisfies the universal property that for every morphism $f:B\to D$ such that $f(x_i)\leq f(y_i)$ holds in $D$ for every $x_i\leq y_i$ in $S$, there is a unique morphism $\bar f:\bpgenquot BS\to D$ such that $f=\bar f\circ\pi$; cf.\ \cite[Prop.\ 5.3.2]{Lorscheid18} for a proof.

\begin{ex}\label{ex: quotients of ordered blueprints}
 It is immediate that we have $\B=\bpgenquot\Fun{1+1\=1}$ (cf.\ Example \ref{ex: Boolean semifield}) where $1+1\=1$ stands for $1+1\leq 1$ and $1\leq 1+1$. 
 
 Another example is $\F_1^\pos=\bpgenquot\Fun{0\leq1}$, which we will encounter again in section \ref{subsection: totally positive ordered blueprints}. Its underlying monoid is $\{0,1\}$ and its ambient semiring is $\N$. In order to determine the partial order of $\F_1^\pos$ consider two natural numbers $x$ and $y$ and assume that $y$ is larger than $x$, i.e.\ $y=x+z$ for some $z\in\N$. Then the relation $0\leq 1$ implies that $0\leq z$ (multiply $0\leq 1$ by $z$) and $x=x+0\leq x+z=y$ (add $x$ to $0\leq z$). We conclude that the partial order $\leq$ of $\F_1^\pos$ is the natural linear order of $\N$.
\end{ex}


\subsection{Tensor products}
\label{subsection: tensor products}

The category of ordered blueprints is complete and cocomplete. In particular, the tensor product $C\otimes_BD$ of three ordered blueprints $B$, $C$ and $D$ with respect to morphisms $B\to C$ and $B\to D$ exists and satisfies the universal property of a pushout of the diagram $C\leftarrow B\to D$. The tensor product can be constructed as follows.

The semiring $(C\otimes_BD)^+$ is the usual tensor product $C^+\otimes_{B^+}D^+$ of commutative semirings, whose elements are classes of finite sums $\sum c_i\otimes d_i$ of \emph{pure tensors $c_i\otimes d_i$} with respect to the usual identifications. The monoid $(C\otimes_BD)^\bullet$ is defined as the subset of all pure tensors $c\otimes d$ of $(C\otimes_BD)^+$ for which $c\in C^\bullet$ and $d\in D^\bullet$. The partial order on $(C\otimes_BD)^+$ is defined as the smallest partial order that is closed under addition and multiplication and that contains all relations of the forms
\[ \textstyle
 \sum a_i \otimes 1 \leq \sum c_k \otimes 1 \qquad \text{and} \qquad \sum 1\otimes b_j \leq \sum 1\otimes d_l
\]
for which $\sum a_i\leq\sum c_k$ in $C$ and $\sum b_j\leq \sum d_l$ in $D$, respectively. 

\begin{ex}\label{ex: tensor products of ordered blueprints} 
 The tensor product satisfies the usual compatibilities with free algebras and quotients. For example, we have 
 \[
  B\otimes_\Fun\Big(\Fun[T_1,\dotsc,T_n]\Big) \ = \ B[T_1,\dotsc,T_n] \quad \text{and} \quad B\otimes_\Fun\Big(\bpgenquot{\Fun}{0\leq1}\Big) \ = \ \bpgenquot{B}{0\leq 1}.
 \]
\end{ex}


\subsection{The tropical hyperfield}
\label{subsection: the tropical hyperfield}

In this section, we shall introduce Viro's tropical hyperfield in its incarnation as an ordered blueprint. As a hyperfield, it is defined as $\R_{\geq0}$ together with the usual multiplication and the hyperaddition 
\[
 a\hyperplus b \ = \ \begin{cases}
                      \big\{\max\{a,b\}\big\} &\text{if }a\neq b, \\
                      [0,a]                   &\text{if }a=b,
                     \end{cases}
\]
where the maximum is taken with respect to the natural linear order of $\R_{\geq0}$. In other words, $c\in a\hyperplus b$ if and only if the maximum among $a$, $b$ and $c$ occurs twice.

Let $\R_{\geq0}^\bullet$ be the multiplicative monoid of $\R_{\geq0}$ and $(\R_{\geq0}^\bullet)^\alg=\big(\R_{\geq0}^\bullet,(\R_{\geq0}^\bullet)^+,=\big)$ the associated algebraic blueprint, cf.\ section \ref{subsection: monoids with zero}. Intuitively, the ordered blueprint version of the tropical hyperfield results from a symbolic exchange of the relation $c\in a\hyperplus b$ by $c\leq a+b$. Looking more closely at the ordered blueprint 
\[\textstyle
 \bT \ = \ \bpgenquot{(\R_{\geq0}^\bullet)^\alg}{c\leq a+b|c\in a\hyperplus b}
\]
reveals that its underlying monoid $\bT^\bullet$ is $\R_{\geq0}^\bullet$, its ambient semiring is $\bT^+=(\R_{\geq0}^\bullet)^+=\N[\R_{>0}^\bullet]$ and its partial order is defined by all relations $c\leq a+b$ for which the maximum occurs twice among $a$, $b$ and $c$. This latter fact generalizes to multiple term sums as follows. 

\begin{lemma}\label{lemma: characterization of monomial relations in the tropical hyperfield}
 For $n\geq1$ and $a,b_1,\dotsc,b_n\in \bT$, we have $a\leq \sum_{j=1}^n b_j$ in $\bT$ if and only if the maximum occurs twice among the elements $a,b_1,\dotsc,b_n$.
\end{lemma}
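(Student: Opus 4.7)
The plan is to first describe the partial order $\leq$ on $\bT^+$ concretely via ``elementary reductions'', and then derive each direction from this description: the ``if'' direction as an explicit chain of generating relations, and the ``only if'' direction by monotonicity of a combinatorial invariant.

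First, I would establish the following structural description: $x \leq y$ in $\bT^+$ if and only if there is a finite sequence $y = z_0, z_1, \ldots, z_K = x$ in $\bT^+$ such that each $z_{k+1}$ is obtained from $z_k$ by picking two summands $p, q \in \bT^\bullet$ in the formal representation of $z_k$ as an element of $\N[\R_{>0}]$ and replacing $p + q$ by a single term $r \in p \hyperplus q$. The relation given by such sequences is reflexive and transitive, is stable under adding an element $w \in \bT^+$ (apply the same reductions and leave $w$ untouched), and is stable under multiplication by $z \in \bT^\bullet$ (via the identity $z \cdot (p \hyperplus q) = (zp) \hyperplus (zq)$); stability under multiplication by arbitrary $z \in \bT^+$ then follows from distributivity and additivity. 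Since this relation contains the generating relations and $\leq$ is the smallest additive and multiplicative partial order doing so, the two coincide. The hard part will be this bookkeeping: carefully unfolding the closure procedure defining $\leq$ and verifying that every derived inequality is witnessed by such a sequence.

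For the ``if'' direction, I would set $M = \max\{a, b_1, \ldots, b_n\}$ and split into two cases. If $a = M$ and, say, $b_1 = M$, then the generating relation $b_1 \leq b_1 + b_j$ (valid because $\max\{b_1, b_1, b_j\} = M$ occurs at least twice) assembles for $j = 2, \ldots, n$, via additivity and transitivity, into $a = b_1 \leq \sum_{j=1}^n b_j$. If $a < M$ with $b_1 = b_2 = M$, then $a \in M \hyperplus M = [0, M]$ yields $a \leq M + M$, while the generating relations $M \leq M + b_k$ for $k \geq 3$ (extended to $M + M \leq M + M + b_k$ by adding $M$ to both sides, then chained by additivity and transitivity) yield $M + M \leq M + M + b_3 + \cdots + b_n = \sum_{j=1}^n b_j$, and transitivity concludes.

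For the ``only if'' direction, given $a \leq \sum_{j=1}^n b_j$, I would fix an elementary-reduction sequence $\sum_{j=1}^n b_j = z_0 \to \cdots \to z_K = a$ provided by the structural description and let $m_k$ denote the number of terms equal to $M$ in the formal representation of $z_k$. A case analysis on a reduction $p + q \to r \in p \hyperplus q$ shows that $m_k$ is non-increasing: if $p, q < M$ then $r \leq \max(p, q) < M$; if exactly one of $p, q$ equals $M$ then $p \hyperplus q = \{M\}$ forces $r = M$; if $p = q = M$ then $r \in [0, M]$ and $m_k$ decreases by $1$ or $2$. Since $m_0 = |\{j : b_j = M\}|$ while $m_K$ equals $1$ if $a = M$ and $0$ if $a < M$, monotonicity yields a second occurrence of $M$ in both subcases: when $a = M$, some $b_j$ must also equal $M$; when $a < M$, at least one reduction of the third type must have occurred, forcing $m_0 \geq 2$ and hence two $b_j$'s equal to $M$.
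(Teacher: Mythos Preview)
Your proof is correct and takes a genuinely different route from the paper's. The paper argues by induction on $n$: for the ``only if'' direction it asserts (without much detail) that any relation $a\leq\sum_{j=1}^n b_j$ factors through a nontrivial partition $\{1,\dotsc,n\}=\coprod_i J_i$ with intermediate elements $a_i$ satisfying $a\leq\sum a_i$ and $a_i\leq\sum_{j\in J_i}b_j$, and then applies the inductive hypothesis to each piece. You instead make this structural content fully explicit by describing $\leq$ as chains of elementary two-term reductions, and then replace the induction by a direct monotonicity argument on the number of maximum-valued terms. Your approach has the virtue of giving a clean, reusable description of the partial order and a slick invariant; the paper's inductive decomposition is terser but relies on exactly the structural fact you prove carefully. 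The ``if'' directions in both proofs are essentially the same case analysis, though you spell out the chaining via additivity more explicitly.
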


\begin{proof}
 We proceed by induction on $n$. If $n=1$, then obviously $a\leq b_1$ if and only if $a=b_1$. If $n=2$, then the claim follows from the definition of $\leq$.
 
 Let $n>2$ and $a\leq\sum_{j=1}^n b_j$. Since $\leq$ is generated by monomial relations with only two terms on the right hand side, there must be a partition of $\{1,\dotsc,n\}$ into smaller nonempty subsets $J_i$ with $i\in I$ and $\# I<n$, a relation $a\leq \sum a_i$ and a relation $a_i\leq \sum_{j\in J_i} b_j$ for every $i\in I$. By the inductive hypothesis, the maximum occurs twice among $a$ and the $a_i$ (with varying $i\in I$) and for every $i\in I$ among $a_i$ and the $b_j$ (with varying $j\in J_i$).
 
 Thus there is some $i\in I$ and $j\in J_i$ such that $b_j$ is the maximum of $a$ and the $b_j$ (for $j\in\{1,\dotsc,n\}$). If we have $b_j=b_{j'}$ for some $j'\in J_i$ different from $j$, then maximum among $a,b_1,\dotsc,b_n$ occurs twice. 
 
 If not, then $a_i=b_j$. If $a=a_i$, then maximum among $a,b_1,\dotsc,b_n$ occurs twice. If not, then $a_i=a_{i'}$ for some $i'\in I$ different from $i$. Then there is a $j'\in J_{i'}$ such that $b_{j'}=a_{i'}=a_i=b_j$. Thus also in the last case, the maximum among $a,b_1,\dotsc,b_n$ occurs twice, which verifies one implication of the claim of the lemma.
 
 Conversely, assume that the maximum occurs twice among $a,b_1,\dotsc,b_n$. First we consider the case that it occurs twice among $b_1,\dotsc,b_n$. After relabeling the elements, we can assume that $b_1=b_2\geq b_i$ for all $i=3,\dotsc,n$. By the inductive hypothesis, we have thus $a\leq \sum_{i=1}^{n-1} b_i$ and $b_n\leq \sum_{i=1}^{n-1} b_i$. If $a\geq b_n$, then the former relation implies that
 \[\textstyle
  a \ \leq \ a+b_n \ \leq \ \sum_{i=1}^n b_i
 \]
 and if $a\leq b_n$, then the latter relation implies that 
 \[\textstyle
  a \ \leq \ b_n+b_n \ \leq \ \sum_{i=1}^n b_i.
 \]
 Thus our claim follows if the maximum occurs twice among $b_1,\dots,b_n$. If $a$ is equal to the maximum of $b_1,\dotsc,b_n$, say $a=b_1$, then $a\leq \sum_{i=1}^{n-1}b_i$ and $a\leq a+b_n$ by the inductive hypothesis. Adding $b_n$ to the former relation yields
 \[\textstyle
  a \ \leq \ a+b_n \ \leq \sum_{i=1}^n b_i,
 \]
 as desired. This concludes the proof of the lemma.
\end{proof}

\begin{rem}\label{rem: extended tropical numbers}
 Izhakian's \emph{extended tropical semiring} (\cite{Izhakian09}) is closely related to the tropical hyperfield $\bT$, as was explained to me by Stephane Gaubert. Namely, the multiplication and the hyperaddition of $\bT$ extends to the family of all singletons $\{a\}$ and all intervals $a^\nu=[0,a]$ (the \emph{ghost elements}) by the rules
 \[
  A\cdot B \ = \ \big\{ \, ab \, \big| \, a\in A, b\in B \, \big\} \qquad \text{and} \qquad A\hyperplus B \ = \ \bigcup_{a\in A,b\in B} a\hyperplus b,
 \]
 which defines a semiring structure on $\big\{\{a\},a^\nu\big|a\in\bT\big\}$. This semiring is isomorphic to Izhakian's extended tropical semiring.
\end{rem}


\section{Tropicalization as a base change to the tropical hyperfield}
\label{section: tropicalization as a base change to the tropical hyperfield}

In this section, we explain the variant of scheme theoretic tropicalization over the tropical hyperfield. Roughly speaking, we interpret a nonarchimedean absolute value $v:k\to\R_{\geq0}$ as a morphism into the tropical hyperfield $\bT$ and define the scheme theoretic tropicalization of a $k$-variety as its base change to $\bT$ along this morphism. 

As we will explain in section \ref{subsection: analytification and tropicalization as rational point sets}, the set theoretic tropicalization can be recovered as the set of $\bT$-rational points from the scheme theoretic tropicalization. In last part of this section, we explain how $\Trop_\bv(X)(\bT)$ inherits a topology from $\bT$.


\subsection{Nonarchimedean seminorms as morphisms}
\label{subsection: nonarchimedean seminorms as morphisms}

Let $R$ be a ring. A nonarchimedean seminorm on $R$ is a map $v:R\to\R_{\geq0}$ that satisfies for all $a,b\in R$ that 
\begin{enumerate}
 \item\label{norm1} $v(0)=0$ and $v(1)=1$;
 \item\label{norm2} $v(ab)=v(a)v(b)$;
 \item\label{norm3} $v(a+b)\leq\max\{v(a),v(b)\}$
\end{enumerate}
where the relation $\leq$ in \eqref{norm3} is the natural linear order of $\R_{\geq0}$. Note that if $R=k$ is a field, then a nonarchimedean seminorm $v:k\to\R_{\geq0}$ is the same as a nonarchimedean absolute value. Though the following fact is well-known, we include a proof for completeness.
 
\begin{lemma}\label{lemma: strong version of the strict triangular inequality}
 Let $R$ be a ring and $v:R\to\R_{\geq0}$ a nonarchimedean seminorm. Let $a=\sum_{j=1}^n b_j$ in $R$. Then the maximum occurs twice among $v(a),v(b_1),\dotsc,v(b_n)$.
\end{lemma}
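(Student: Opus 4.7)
The plan is to reduce to the two-term inequality $v(x+y)\leq\max\{v(x),v(y)\}$ and exploit the classical fact that this inequality becomes an equality whenever $v(x)\neq v(y)$.

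First I would record two easy consequences of the axioms \eqref{norm1}--\eqref{norm3}. By $1=v(1)=v((-1)^2)=v(-1)^2$ we get $v(-1)=1$, hence $v(-c)=v(c)$ for every $c\in R$; this will let us rearrange the identity $a=\sum b_j$ freely. An induction on $n$ using \eqref{norm3} gives the iterated form $v(\sum_{j=1}^n b_j)\leq\max_j v(b_j)$. Together these yield the sharpening: if $v(x)\neq v(y)$, then $v(x+y)=\max\{v(x),v(y)\}$. Indeed, assuming $v(x)>v(y)$, one applies the triangle inequality to $x=(x+y)+(-y)$ to obtain $v(x)\leq\max\{v(x+y),v(y)\}$, which forces $v(x+y)\geq v(x)$, and the reverse inequality is built in.

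Now I would set $M=\max\{v(a),v(b_1),\dotsc,v(b_n)\}$ and argue by cases. If $M=v(a)$ and no $v(b_j)$ attains $M$, then all $v(b_j)<v(a)$, so the iterated triangle inequality gives $v(a)=v(\sum b_j)\leq \max_j v(b_j)<v(a)$, a contradiction; hence some $v(b_j)=M=v(a)$ and the maximum occurs at least twice. If instead $M>v(a)$, pick $k$ with $v(b_k)=M$. Writing $b_k=a-\sum_{j\neq k}b_j$ and applying the iterated triangle inequality gives $M=v(b_k)\leq\max\{v(a),\max_{j\neq k}v(b_j)\}$; since $v(a)<M$, we must have $v(b_j)=M$ for some $j\neq k$, so again $M$ is attained at least twice.

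The argument is essentially routine once the sharpened two-term statement is in hand, so no step should really be an obstacle; the only subtlety worth flagging is the systematic use of additive inverses in $R$, which is what allows us to solve for any one summand and invoke $v(-c)=v(c)$. This is why the statement is phrased for rings (with subtraction) rather than for semirings, where the argument would break down.
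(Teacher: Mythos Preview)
Your proof is correct and follows essentially the same route as the paper: both arguments establish $v(-c)=v(c)$ and the iterated triangle inequality, then solve the identity $a=\sum b_j$ for the summand attaining the maximum and apply the inequality once more. The paper streamlines the case split by first observing that the maximum is always attained by some $v(b_j)$ (since $v(a)\leq\max_j v(b_j)$), but the content is the same. One minor remark: the ``sharpening'' $v(x+y)=\max\{v(x),v(y)\}$ when $v(x)\neq v(y)$ that you record is never actually invoked in your case analysis, so you could drop that paragraph.
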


\begin{proof}
 A simple induction shows that $v(a)\leq \max\{v(b_1),\dotsc,v(b_n)\}$. Thus the maximum among $v(a),v(b_1),\dotsc,v(b_n)$ is attend by $v(b_j)$ for some $j$. Without loss of generality, we can assume that $j=1$. Since $\epsilon^2=1$ implies $\epsilon=1$ in $\R_{\geq0}$, we have $v(-1)=1$ and $v(-a)=v(-1)v(a)=v(a)$. Axiom \eqref{norm3} of a nonarchimedean seminorm applied to $-b_1=-a+\sum_{i=2}^n$ yields
 \[
  v(b_1) \ = \ v(-b_1) \ \leq \ \max\{v(-a),v(b_2),\dotsc,v(b_n)\} \ \leq \ \max\{v(a),v(b_2),\dotsc,v(b_n)\},
 \]
 which implies that the maximum $v(b_1)$ occurs twice among $v(a),v(b_1),\dotsc,v(b_n)$.
\end{proof}
 
Recall that the underlying set of the ordered blueprint $\bR=R^\mon$ is $\bR^\bullet=R$ and that the underlying set of the tropical hyperfield $\bT$ is $\bT^\bullet=\R_{\geq0}$. Thus by definition, a morphism $\bv:\bR\to\bT$ is a map $\bv^\bullet:R\to \R_{\geq0}$. 

That nonarchimedean seminorms can be interpreted as morphisms of hyperrings was observed Viro in \cite{Viro11}; also cf.\ \cite{Connes-Consani15} and \cite{Jun17}. In so far, the following theorem does not contain a novel mathematical fact, though its appearance in terms of ordered blueprints is new. Since it is a key fact for our theory, we include a short proof.
 
\begin{thm}\label{thm: nonarchimedean seminoms as morphisms}
 Let $R$ be a ring. The association $\bv\mapsto\bv^\bullet$ defines a bijection
 \[
  \begin{array}{cccc}
   \Phi: & \Hom(\bR,\bT) & \longrightarrow & \big\{ \, \text{nonarchimedean seminorms on }R \,\big\}. \\
         & \bv:\bR\to\bT & \longmapsto     & \bv^\bullet:R\to\R_{\geq0}
  \end{array}
 \]
\end{thm}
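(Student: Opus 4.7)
The plan is to split the statement into injectivity and surjectivity of $\Phi$, with the content concentrated entirely in the latter. The conceptual point that powers everything is the dictionary, supplied by Lemmas \ref{lemma: characterization of monomial relations in the tropical hyperfield} and \ref{lemma: strong version of the strict triangular inequality}, between the ultrametric inequality $v(a+b)\leq\max\{v(a),v(b)\}$ on the one hand and the partial order relation $v(c)\leq v(a)+v(b)$ in $\bT^+$ (whenever $c=a+b$ in $R$) on the other.

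Injectivity of $\Phi$ will be automatic: a morphism $\bv\colon\bR\to\bT$ of ordered blueprints is uniquely determined by its underlying multiplicative map $\bv^\bullet\colon\bR^\bullet\to\bT^\bullet$, because the semiring extension $\bv^+\colon\bR^+\to\bT^+$ is forced by the fact that $\bR^\bullet$ generates $\bR^+$. To see that $\bv^\bullet$ is indeed a nonarchimedean seminorm, I would read off axioms \eqref{norm1} and \eqref{norm2} directly from the definition of a morphism of ordered blueprints. For axiom \eqref{norm3}, whenever $c=a+b$ in $R$ the relation $c\leq a+b$ is a defining relation of $\bR$, so order-preservation of $\bv^+$ yields $\bv^\bullet(c)\leq\bv^\bullet(a)+\bv^\bullet(b)$ in $\bT^+$; by Lemma \ref{lemma: characterization of monomial relations in the tropical hyperfield} this forces the maximum among $\bv^\bullet(c),\bv^\bullet(a),\bv^\bullet(b)$ to occur twice, and in particular $\bv^\bullet(c)\leq\max\{\bv^\bullet(a),\bv^\bullet(b)\}$.

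For surjectivity, I would start from a nonarchimedean seminorm $v\colon R\to\R_{\geq0}$, set $\bv^\bullet=v$ as a map $\bR^\bullet\to\bT^\bullet$, and produce the morphism $\bv$ by extending linearly. Because $\bR^+$ is obtained from the monoid semiring $\N[R^\bullet]$ by identifying $0_R$ with the empty sum (a relation that $v$ respects since $v(0)=0$), and because $v$ is multiplicative with $v(1)=1$, the map $\bv^\bullet$ extends uniquely to a semiring homomorphism $\bv^+\colon\bR^+\to\bT^+$. The only nontrivial step is showing that this $\bv^+$ is order-preserving, for which it suffices to verify preservation of the generating relations $c\leq a+b$ of $\bR$: that is, that $v(c)\leq v(a)+v(b)$ in $\bT^+$ whenever $c=a+b$ in $R$. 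This is precisely Lemma \ref{lemma: strong version of the strict triangular inequality} applied with $n=2$ (yielding that the maximum occurs twice among $v(c),v(a),v(b)$), combined with Lemma \ref{lemma: characterization of monomial relations in the tropical hyperfield} to reinterpret that statement as the desired relation in $\bT^+$.

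The only step carrying real content is the order-preservation of $\bv^+$ in the surjectivity direction, and the two preceding lemmas have been set up precisely to handle it; no further calculation is needed.
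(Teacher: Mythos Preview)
Your proposal is correct and follows essentially the same approach as the paper: both arguments verify well-definedness of $\Phi$ by reading off axioms \eqref{norm1}--\eqref{norm2} from the monoid-with-zero morphism and deducing \eqref{norm3} via Lemma \ref{lemma: characterization of monomial relations in the tropical hyperfield}, and both construct the inverse by extending $v$ to $\bv^+$ using freeness of $\bR^+$ and verifying order-preservation on generators via Lemmas \ref{lemma: strong version of the strict triangular inequality} and \ref{lemma: characterization of monomial relations in the tropical hyperfield}. The only cosmetic difference is that you frame the argument as ``injectivity plus surjectivity'' whereas the paper frames it as ``well-defined plus construct inverse,'' but the content is identical.
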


\begin{proof}
 We begin with the verification that $\Phi$ is well-defined, i.e.\ that $\bv^\bullet:R\to\R_{\geq0}$ is indeed a nonarchimedean seminorm. We denote $\bv^\bullet$ by $v$ in the following. Axioms \eqref{norm1} and \eqref{norm2} follow from the fact that $v=\bv^\bullet$ is a morphism of monoids with zero. Thus we are left with verifying axiom \eqref{norm3}.

 Let $a,b\in R$ and $c=a+b$. Then we have $c\leq a+b$ in $\bR$ and thus $\bv(c)\leq \bv(a)+\bv(b)$ in $\bT$. By Lemma \ref{lemma: characterization of monomial relations in the tropical hyperfield}, this means that the maximum among $\bv(a)$, $\bv(b)$ and $\bv(c)$ appears twice. This implies \eqref{norm3} at once.
 
 Conversely, consider a nonarchimedean seminorm $v:R\to\R_{\geq0}$ and define $\bv^\bullet=v$ as a map $\bR^\bullet=R\to\R_{\geq0}=\bT^\bullet$. By axioms \eqref{norm1} and \eqref{norm2}, $\bv^\bullet$ is a morphism of monoids with zero. Since $\bR^+=(\bR^\bullet)^+$ is the freely generated semiring, $\bv^\bullet$ extends uniquely to a semiring homomorphism $\bv^+:\bR^+\to\bT^+$. We are left with verifying that $\bv^+$ is order-preserving, which can be verified on generators $c\leq a+b$ of the partial order $\leq$ of $\bR$.
 
 The relation $a\leq b_1+b_2$ means that $a=b_1+b_2$ in $R$. By Lemma \ref{lemma: strong version of the strict triangular inequality}, the maximum among $v(a)$, $v(b_1)$ and $v(b_2)$ appears twice. By Lemma \ref{lemma: characterization of monomial relations in the tropical hyperfield}, this implies that $\bv(a)\leq\bv(b_1)+\bv(b_2)$, which shows that $\bv:\bR\to\bT$ is a morphism of ordered blueprints. This verifies the claim of the lemma.
\end{proof}

Given a nonarchimedean seminorm $v:R\to\R_{\geq0}$, we shall call the morphism $\bv:\bR\to\bT$ with $\bv^\bullet=v$ the \emph{associated morphism} in the following.

\begin{cor}\label{cor: k-linear seminorms as morphisms}
 Let $k$ be a field and $\bk=k^\mon$ the associated monomial ordered blueprint. Let $v:k\to\R_{\geq0}$ be a nonarchimedean absolute value and $\bv:\bk\to\bT$ the associated morphism. Let $f:k\to R$ be a $k$-algebra and $f^\mon:\bk\to \bR$ the associated $\bk$-algebra where $\bR=R^\mon$. Then the association $\bw\mapsto \bw^\bullet$ defines a bijection
 \[
  \begin{array}{cccc}
   \Phi: & \Hom_\bk(\bR,\bT) & \longrightarrow & \big\{ \, \text{nonarchimedean seminorms }w:R\to\R_{\geq0} \, \big| \, w\circ f=v \,\big\}.
  \end{array}
 \]
\end{cor}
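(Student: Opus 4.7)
The plan is to deduce this corollary as a direct refinement of Theorem \ref{thm: nonarchimedean seminoms as morphisms} by tracking the $\bk$-algebra condition on both sides of the bijection $\Phi$ from that theorem (now applied to $R$).

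First, I would note that Theorem \ref{thm: nonarchimedean seminoms as morphisms} already supplies a bijection $\Phi:\Hom(\bR,\bT)\to\{\text{nonarchimedean seminorms on }R\}$ sending $\bw\mapsto \bw^\bullet$. The corollary is nothing more than the statement that, under this bijection, $\bk$-linearity on the left corresponds to the extension condition $w\circ f=v$ on the right. Therefore the whole proof reduces to unwinding what ``$\bk$-linear'' means on underlying sets.

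By definition, a morphism $\bw:\bR\to\bT$ of ordered blueprints is $\bk$-linear (with respect to the structure maps $f^\mon:\bk\to\bR$ and $\bv:\bk\to\bT$) if and only if $\bw\circ f^\mon=\bv$ as morphisms $\bk\to\bT$. Since a morphism of ordered blueprints is determined by its restriction to the underlying monoids, this identity of morphisms is equivalent to the identity of maps $\bw^\bullet\circ (f^\mon)^\bullet=\bv^\bullet$ of underlying sets. But $(f^\mon)^\bullet=f$ and $\bv^\bullet=v$ by construction of $f^\mon$ and $\bv$, so $\bk$-linearity of $\bw$ is equivalent to $\bw^\bullet\circ f=v$. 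Conversely, given a nonarchimedean seminorm $w$ on $R$ with $w\circ f=v$, the morphism $\bw=\Phi^{-1}(w)$ from Theorem \ref{thm: nonarchimedean seminoms as morphisms} satisfies $\bw^\bullet=w$, hence $\bw^\bullet\circ f=v=\bv^\bullet$, which as above upgrades to $\bw\circ f^\mon=\bv$.

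Putting these two observations together, $\Phi$ restricts to a bijection between the subset $\Hom_\bk(\bR,\bT)\subset\Hom(\bR,\bT)$ and the subset of seminorms $w$ extending $v$, which is precisely the claim. There is no real obstacle here: the only substance of the corollary sits in Theorem \ref{thm: nonarchimedean seminoms as morphisms}, and the one check needed is the tautological observation that $\bk$-linearity is detected on underlying sets because $\bk$-linear morphisms of ordered blueprints coincide with their restrictions to underlying monoids.
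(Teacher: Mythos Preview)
Your proposal is correct and follows essentially the same approach as the paper: apply Theorem \ref{thm: nonarchimedean seminoms as morphisms} to $R$ to obtain the bijection $\Hom(\bR,\bT)\to\{\text{seminorms on }R\}$, and then observe that $\bk$-linearity $\bw\circ f^\mon=\bv$ is equivalent to the extension condition $w\circ f=v$ on underlying sets. The paper's proof is simply a terser version of what you wrote.
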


\begin{proof}
 By Theorem \ref{thm: nonarchimedean seminoms as morphisms}, the association $\bw\mapsto \bw^\bullet$ defines a bijection between morphisms $\bw:\bR\to\bT$ and nonarchimedean seminorms $w:R\to\R_{\geq0}$. A morphism $\bw:\bR\to\bT$ is $\bk$-linear if $\bw\circ f^\mon=\bv$. This condition is evidently equivalent with $w\circ f=v$, which proves our claim.
\end{proof}


\subsection{Affine ordered blue schemes}
\label{subsection: affine ordered blue schemes}

For the purpose of this text, we define the category of \emph{affine ordered blue schemes} as the dual category $\OBAff$ of $\OBlpr$. We denote the anti-equivalences between these categories by
\[
 \begin{tikzcd}[column sep=2cm]
  \OBlpr \ar[shift left=0.5ex]{r}{\Spec} & \OBAff \ar[shift left=0.5ex]{l}{\Gamma}.
 \end{tikzcd}
\]
Typically, we say that $X=\Spec B$ is an affine ordered blue scheme where we assume implicitly that $B$ is an ordered blueprint. Given a morphism $f:B\to C$ of ordered blueprints, we write $f^\ast:Y\to X$ for the dual morphism from $Y=\Spec C$ to $X=\Spec B$. Given a morphism $\varphi:Y\to X$, we denote its dual morphism by $\varphi^\sharp:B\to C$. 

Let $k$ be an ordered blueprint. An \emph{affine ordered blue $k$-scheme} is an affine ordered blue scheme $X=\Spec B$ together with a morphism $\pi:X\to\Spec k$, which we call the \emph{structure morphism}. Often we suppress the structure morphism from the notation and refer to $X$ as an affine ordered blue $k$-scheme. A $k$-linear morphism between affine ordered blue $k$-schemes $X$ and $Y$ is a morphism $X\to Y$ that commutes with the structure morphisms of $X$ and $Y$. Note that the morphism $\pi:X\to \Spec k$ is the dual of an ordered blueprint morphism $f:k\to B$. This means that $B$ is a $k$-algebra.

\begin{ex}\label{ex: affine space and torus}
 Let $B$ be an ordered blueprint. We define the \emph{$n$-dimensional affine space over $B$} as the affine ordered blue $B$-scheme 
 \[
  \A^n_B \ = \ \Spec\big( \, B[T_1,\dotsc,T_n] \, \big)
 \]
 and the \emph{$n$-dimensional torus over $B$} as 
 \[
  \G_{m,B}^n \ = \ \Spec\big( \, B[T_1^{\pm1},\dotsc,T_n^{\pm1}] \, \big).
 \]
 By the universal property of $\B[T_1,\dotsc,T_n]$, a $B$-linear morphism $f:B[T_1,\dotsc,T_n]\to B$ corresponds to the tuple $\big(f(T_1),\dotsc,f(T_n)\big)$ in $B^n$. This establishes a canonical bijection
 \[
  \A_B^n(B) \ = \ \Hom_B\big(B[T_1,\dotsc, T_n],B\big) \ \stackrel\sim\longrightarrow \ B^n,
 \]
 which is analogous to the characterizing property of the $n$-dimensional affine space in usual algebraic geometry. Similarly, we have a canonical bijection
 \[
  \G_{m,B}^n(B) \ = \ \Hom_B\big(B[T_1^{\pm1},\dotsc,T_n^{\pm1}],B\big) \ \stackrel\sim\longrightarrow \ (B^\times)^n
 \]
 where $B^\times$ is the group of invertible elements in $B^\bullet$.
\end{ex}

\begin{rem}
 The reason that we restrict ourselves to affine ordered blue schemes is purely a matter of exposition. While it is possible to define affine ordered blue schemes as objects of the dual category of $\OBlpr$, the definition of ordered blue schemes requires a more sophisticated setup. The definition of an ordered blue scheme can be found in \cite{Baker-Lorscheid18} and \cite{Lorscheid15}. An alternative, but equivalent, definition uses relative schemes in the sense of To\"en and Vaqui\'e (\cite{Toen-Vaquie09}); see \cite{Lorscheid17} for a proof of the equivalence in the case of algebraic blueprints.
\end{rem}


\subsection{Scheme theoretic tropicalization}
\label{subsection: scheme theoretic tropicalization}

Let $k$ be a field and $\bk=k^\mon$. Let $v:k\to\R_{\geq0}$ be a nonarchimedean absolute value and $\bv:\bk\to\bT$ the associated morphism, cf.\ Theorem \ref{thm: nonarchimedean seminoms as morphisms}. Let $Y=\Spec B$ be an affine ordered blue $\bk$-scheme and $\bk\to B$ the structure map. The \emph{scheme theoretic tropicalization of $Y$ along $\bv$} is the affine ordered blue $\bT$-scheme $\Trop_\bv(Y)=\Spec\big(B\otimes_{\bk}\bT\big)$.

Note that the canonical inclusion $\bT\to B\otimes_{\bk}\bT$ endows $\Trop_\bv(Y)$ with the structure of an ordered blue $\bT$-scheme. Note further that since $-\otimes_{\bk}\bT$ is a functor, $\Trop_\bv(Y)$ is functorial in $Y$, i.e.\ a $k$-linear morphism $Y\to Y'$ induces a $\bT$-linear morphism $\Trop_\bv(Y)\to\Trop_\bv(Y')$.

\begin{ex}\label{ex: scheme theoretic tropicalizations of affine spaces and tori}
 Since $\bk[T_1,\dotsc,T_n]\otimes_\bk\bT=\bT[T_1,\dotsc,T_n]$, cf.\ Example \ref{ex: tensor products of ordered blueprints}, the scheme theoretic tropicalization of affine $n$-space over $\bk$ is $\Trop_\bv(\A_\bk^n)=\A_\bT^n$. Similarly, we have $\Trop_\bv(\G_{m,\bk}^n)=\G_{m,\bT}^n$.
\end{ex}


\subsection{Topology for rational point sets}
\label{subsection: topology for rational point sets}

Let $X=\Spec B$ be an affine ordered blue $\bT$-scheme. The \emph{set of $\bT$-rational points} is the set $X(\bT)=\Hom_\bT(B,\bT)$ of $\bT$-linear morphisms from $B$ to $\bT$. The \emph{affine topology of $X(\bT)$} is defined as the compact-open topology on $\Hom_\bT(B,\bT)$ with respect to the Euclidean topology of $\bT$ and the discrete topology for $B$. In other words, $X(\bT)$ has the coarsest topology such that the evaluation maps
\[
 \begin{array}{cccc}
  \ev_a: & X(\bT)    & \longrightarrow & \bT \\
         & f:B\to\bT & \longmapsto     & f(a)
 \end{array}
\]
are continuous for all $a\in B$.

The affine topology for $\bT$-rational point sets is very well-behaved, similar to the situation of rational point sets over topological fields.

\begin{thm}\label{thm: properties of rational point sets}
 The affine topology satisfies the following properties for affine ordered blue $\bT$-schemes $X$ and $Y$.
 \begin{enumerate}[label={(T\arabic*)}]
  \item\label{T1} The canonical bijection $\A^1_\bT(\bT)\to\bT$ is a homeomorphism.
  \item\label{T2} The canonical bijection $(X\times Y)(\bT)\to X(\bT)\times Y(\bT)$ is a homeomorphism.
  \item\label{T3} A $\bT$-linear morphism $\varphi:Y\to X$ induces continuous map $\varphi_\bT:Y(\bT)\to X(\bT)$. If $\varphi$ is an open (a closed) immersion, then $\varphi_\bT$ is an open (a closed) topological embedding.
 \end{enumerate}
\end{thm}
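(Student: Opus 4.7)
The plan is to treat the three claims in turn, using universal properties, continuity of multiplication on $\bT = \R_{\geq 0}$, and the following observation extracted from Lemma \ref{lemma: characterization of monomial relations in the tropical hyperfield}: for every $n \geq 1$, the set $\{(x_0, \dotsc, x_n) \in \bT^{n+1} : x_0 \leq x_1 + \dotsb + x_n \text{ in } \bT\}$ is closed in the Euclidean topology, since its complement is the open set on which the maximum among the $x_i$ is attained at a unique index. For \ref{T1}, the canonical map $\A^1_\bT(\bT) \to \bT$ is precisely $\ev_T$, hence continuous; its inverse sends $t$ to the unique $\bT$-linear morphism $f_t : \bT[T] \to \bT$ with $f_t(T) = t$, and its continuity reduces to verifying that $t \mapsto f_t(cT^e) = c\,t^e$ is continuous for every generator $cT^e \in \bT[T]^\bullet$. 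For \ref{T2}, the universal property of the tensor product yields a bijection $(X \times Y)(\bT) \to X(\bT) \times Y(\bT)$; its continuity follows from \ref{T3} applied to the canonical morphisms $B, C \to B \otimes_\bT C$, and continuity of its inverse reduces to continuity of each evaluation $(f, g) \mapsto f(a)\,g(b)$ on the product topology, since $(B \otimes_\bT C)^\bullet$ consists of pure tensors $a \otimes b$.

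For \ref{T3}, any $\bT$-linear morphism $\varphi : Y \to X$ dualizes to $\varphi^\sharp : B \to C$; since $\ev_a \circ \varphi_\bT = \ev_{\varphi^\sharp(a)}$ for every $a \in B^\bullet$, the map $\varphi_\bT$ is continuous. For a closed immersion---which I identify with a quotient $\varphi^\sharp : B \twoheadrightarrow C = \bpgenquot{B}{S}$ whose defining relations may be taken in the monomial form $c \leq a_1 + \dotsb + a_n$ with $c, a_i \in B^\bullet$---surjectivity on underlying monoids makes $\varphi_\bT$ injective, its image is the intersection over $S$ of the sets $\{f \in X(\bT) : f(c) \leq f(a_1) + \dotsb + f(a_n) \text{ in } \bT\}$ (each closed by the opening observation), and the affine topology of $Y(\bT)$ agrees with the subspace topology because every evaluation $\ev_b$ with $b \in C^\bullet$ lifts to $\ev_a$ for some preimage $a \in B^\bullet$. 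For an open immersion---which I identify with a localization $\varphi^\sharp : B \to C = B[a^{-1}]$ at some $a \in B^\bullet$---the image of $\varphi_\bT$ is the open set $\ev_a^{-1}(\bT \setminus \{0\})$, and the only additional generator $a^{-1} \in C^\bullet$ contributes the evaluation $\ev_{a^{-1}}(g) = g(a)^{-1}$, which is continuous on this open subset by continuity of inversion on $\R_{>0}$.

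The main obstacle is the precise identification of open and closed immersions at the level of ordered blueprints---as localizations and quotients, respectively---which is not spelled out in this excerpt; one must also know that the defining relations of such a quotient can be reduced to the monomial form so that Lemma \ref{lemma: characterization of monomial relations in the tropical hyperfield} applies directly. Once these identifications are granted, the topological content of \ref{T3} is formal, resting on the closedness of the order relation in $\bT^{n+1}$ and on the continuity of multiplication and inversion on $\bT$.
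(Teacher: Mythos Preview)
Your arguments for \ref{T1}, \ref{T2}, continuity in \ref{T3}, and the open-immersion case are correct and in fact more self-contained than the paper's, which delegates all of these to \cite[Thm.~6.4]{Lorscheid15} and only argues in detail that the image under a closed immersion is closed. Your opening observation---that $\{(x_0,\dotsc,x_n)\in\bT^{n+1}\mid x_0\leq x_1+\dotsb+x_n\}$ is Euclidean-closed---is also cleaner than the paper's route through the diagonal and the half-plane $\{c\geq d\}$.

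The obstacle you flag for closed immersions is real, and unfortunately its resolution is not the one you hope for: the defining relations of a quotient $C=\bpgenquot{B}{S}$ \emph{cannot} in general be taken in monomial form. For instance, $C=\bpgenquot{\bT[T_1,T_2,T_3,T_4]}{T_1+T_2\leq T_3+T_4}$ is a legitimate quotient whose single generating relation has two terms on the left, and no set of monomial relations on $\bT[T_1,\dotsc,T_4]$ defines the same quotient. The fix---and this is the paper's key step---is to shift attention from $B$ to $\bT$: the partial order of $\bT$ itself \emph{is} generated by monomial relations, so a relation $\sum_{i\in I} f(a_i)\leq\sum_{j\in J} f(b_j)$ holds in $\bT$ if and only if there exists a partition $J=\coprod_{i\in I}J_i$ with $f(a_i)\leq\sum_{j\in J_i}f(b_j)$ for each $i$. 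Consequently
\[
 \Big\{\, f \in X(\bT) \,\Big|\, {\textstyle\sum_i} f(a_i)\leq {\textstyle\sum_j} f(b_j)\text{ in }\bT \,\Big\} \ = \ \bigcup_{J=\coprod J_i}\ \bigcap_{i\in I}\ \Big\{\, f \,\Big|\, f(a_i)\leq {\textstyle\sum_{j\in J_i}} f(b_j) \,\Big\},
\]
a \emph{finite} union (over partitions of the finite index set $J$) of finite intersections of sets of your monomial type, each closed by your opening observation. With this one adjustment your argument for closed immersions goes through unchanged.
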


\begin{proof}
 Since $B$ is a topological ordered blueprint (i.e.\ the multiplication map $\bT\times\bT\to\bT$ is continuous) with open unit group (i.e.\ $\bT^\times=\R_{>0}$ is open in $\bT$ and multiplicative inversion defines a continuous map $\bT^\times\to\bT^\times$), everything follows from \cite[Thm.\ 6.4]{Lorscheid15} but the claim that a closed immersion induces a closed topological embedding. We will prove this claim in the following.
 
 Let $\varphi:Y\to X$ be a closed $\bT$-linear immersion. By \cite[Thm.\ 6.4 (F5)]{Lorscheid15}, $\varphi_\bT$ is a topological embedding. All that is left to prove is that the image $\varphi_\bT(Y(\bT))$ is closed in $X(\bT)$.
 
 If $C=\bpgenquot{B}{S}$ for some set of relations $S=\{x_l\leq y_l\}$ on $B^+$, then the image $\varphi_\bT(Y(\bT))$ equals the subset
 \[\textstyle
  \big\{ f:B\to\bT \big| \sum f(a_i)\leq \sum f(b_j)\text{ for every }\sum a_i\leq\sum b_j\text{ in }S\big\}
 \]
 of $X(\bT)=\{f:B\to\bT\}$. In the following proof, we break down the relations $\sum f(a_i)\leq \sum f(b_j)$ in $\bT$ into more elementary terms and trace this back to an expression of $\varphi_\bT(Y(\bT))$ as an intersection of finite unions of elementary closed sets of $X(\bT)$ of the form $U_{a,Z}=\{f:B\to\bT|f(a)\in Z\}$ where $a\in B$ and $Z\subset\bT$ is a closed subset. Note that $U_{a,Z}$ is indeed closed since the complement of $U_{a,Z}$ in $X(\bT)$ is the elementary open subset $U_{a,Z^c}$ of $X(\bT)$ where the complement $Z^c=\bT-Z$ of $Z$ is open in $\bT$.
 
 Our first observation is that 
 \[
  \varphi_\bT(Y(\bT)) \ = \ \bigcap_{\sum a_i\leq\sum b_j\text{ in }S} \ \textstyle \big\{ f:B\to\bT \big| \sum f(a_i)\leq \sum f(b_j)\big\},
 \]
 which reduces us to the proof that a subset of the form $\big\{ f:B\to\bT \big| \sum f(a_i)\leq \sum f(b_j)\big\}$ is closed in $X(\bT)$. Therefore let us focus on one fixed relation $\sum_{i\in I}a_i\leq \sum_{j\in J}b_j$ in the following where we specify the index sets $I$ and $J$ for reference.
 
 Since $\bT$ is monomial, i.e.\ generated by relations of the form $c\leq \sum d_l$, there must be a partition $J=\coprod_{i\in I}J_i$ of $J$ such that $a_i\leq \sum_{j\in J_i}b_j$ for all $i\in I$. This means that
 \[
  {\textstyle \big\{ f:B\to\bT \big| \sum f(a_i)\leq \sum f(b_j)\big\} } \ = \bigcup_{J=\coprod J_i} \ \bigg( \bigcap_{i\in I} \ \textstyle \big\{f:B\to\bT\big|f(a_i)\leq \sum_{j\in J_i}f(b_j)\big\} \bigg)
 \]
 is the finite union over all partitions of $J$ of intersections of subsets of the form $\big\{f:B\to\bT\big|f(a_i)\leq \sum_{j\in J_i}f(b_j)\big\}$. We have reduced the proof therefore to the situation of showing that a subset of the form $\big\{f:B\to\bT\big|f(a_i)\leq \sum_{j\in J_i} f(b_j)\big\}$ is closed in $X(\bT)$. For simplicity, we assume $J_i=\{1,\dotsc,n\}$ and set $b_0=a$.
 
 In the case that $J_i$ is empty, the relation in question is $a\leq 0$. Thus we could also equally assume that $J_i=\{1\}$ and $b_1=0$. This allows us to rely on Lemma \ref{lemma: characterization of monomial relations in the tropical hyperfield}, which states that $f(b_0)\leq\sum f(b_j)$ holds in $\bT$ if and only if the maximum occurs twice among $f(b_0),\dotsc,f(b_n)$, i.e.\ if we have $f(b_k)=f(b_l)\geq f(b_j)$ for some $0\leq k<l\leq n$ and all $j=0,\dotsc,n$. This means that $\big\{f:B\to\bT\big|f(b_0)\leq \sum_{j=1}^n f(b_j)\big\}$ equals
 \[
   \bigcup_{0\leq k<l\leq n} \ \bigg( \big\{f:B\to\bT\big|f(b_k)=f(b_l)\big\} \quad \cap \quad \bigcap_{j=0}^n \big\{f:B\to\bT\big|f(b_l)\geq f(b_j)\big\} \bigg),
 \]
 i.e.\ a finite union of the intersection of subsets of the forms $\big\{f:B\to\bT\big|f(b_k)=f(b_l)\big\}$ and $\big\{f:B\to\bT\big|f(b_l)\geq f(b_j)\big\}$ of $X(\bT)$. This reduces our proof to the study of these two particular types of subsets.
 
 We begin with $\big\{f:B\to\bT\big|f(b_k)=f(b_l)\big\}$, which can be expressed as $\big\{f:B\to\bT\big|(f(b_k),f(b_l))\in\Delta\big\}$ where $\Delta\in\bT\times\bT$ is the diagonal. Since $\bT$ is Hausdorff, $\Delta$ is closed in $\bT\times\bT$ and can thus be written as an intersection of finite unions of basic closed subsets, i.e.\
 \[
  \Delta \ = \ \bigcap_{p\in P} \ \bigcup_{q\in Q_p} V_{p,q,k}\times V_{p,q,l}
 \]
 where $P$ and $Q_p$ are index sets, with $Q_p$ finite for every $p\in P$, and where $V_{p,q,k}$ and $V_{p,q,l}$ are closed subsets of $\bT$. Thus 
 \[
  \big\{f:B\to\bT\big|(f(b_k),f(b_l)\in\Delta\big\} \ = \ \bigcap_{p\in P} \ \bigcup_{q\in Q_p} \Big(U_{b_k,V_{p,q,k}}\cap U_{b_l,V_{p,q,l}} \Big)
 \]
 where $U_{b_j,V_{p,q,j}}=\big\{f:B\to\bT\big|f(b_j)\in V_{p,q,j}\big\}$ is a basic closed subset of $\bT$ for $j\in\{k,l\}$. This shows that $\big\{f:B\to\bT\big|f(b_k)=f(b_l)\big\}$ is a closed subset of $X(\bT)$.
 
 We continue with the remaining case $\big\{f:B\to\bT\big|f(b_l)\geq f(b_j)\big\}$, which equals the set $\big\{f:B\to\bT\big|(f(b_k),f(b_j)\in\nabla\big\}$ where $\nabla$ is the subset of all $(c,d)\in\bT\times\bT$ with $c\geq d$. Since $\nabla$ is closed in $\bT\times\bT$, an analogous argument as in the preceding case shows that $\big\{f:B\to\bT\big|f(b_l)\geq f(b_j)\big\}$ is a closed subset of $X(\bT)$. This concludes the proof that $\varphi_\bT:Y(\bT)\to X(\bT)$ is a closed topological embedding.
\end{proof}

\begin{rem}
 In \cite{Lorscheid15}, the affine topology for $X(\bT)$ is extended to possibly non-affine ordered blue $\bT$-schemes $X$. In this more general situation, the topology on $X(\bT)$ is called the \emph{fine topology}. By \cite[Thm.\ 6.4]{Lorscheid15}, we conclude that for every affine open covering $\{U_i\}$ of $X$, we have $X(\bT)=\bigcup U_i(\bT)$ as sets and that the inclusions $U_i(\bT)\to X(\bT)$ are open topological embeddings, which determines the fine topology of $X(\bT)$ in terms of the affine topology for the $U_i(\bT)$. In particular the affine and the fine topology agree on $X(\bT)$ if $X$ is affine.
\end{rem}

\begin{ex}\label{ex: topologies of tropical plane and tropical plane line}
 Since $\bT[T_1,T_2]=\bT[T_1]\otimes_\bT\bT[T_2]$, we have $\A_\bT^2=\A_\bT^1\times_\bT\A_\bT^1$. Thus by \ref{T1} and \ref{T2} of Theorem \ref{thm: properties of rational point sets}, we have
 \[
  \A_\bT^2(\bT) \ = \ \A_\bT^1(\bT)\times\A_\bT^1(\bT) \ = \ \bT \times \bT \ = \ \bT^2
 \]
 as topological spaces, where we consider $\bT^2$ with respect to the product topology.
 
 Let $B=\bpgenquot{\bT[T_1,T_2]}{0\leq T_1+T_2+1}$ and $X=\Spec B$. By \ref{T3} of Theorem \ref{thm: properties of rational point sets},
 \[
  X(B) \ = \ \Hom_\bT(B,\bT) \ = \ \big\{ \, (a_1,a_2)\in\bT^2 \, \big| \, 0\leq a_1+a_2+1 \, \big\}
 \]
 is a closed subspace of $\bT^2$. Since $0\leq a_1+a_2+1$ if and only if the maximum occurs twice, we see that $X(B)$ is the standard plane tropical line, which can be depicted as 
 \[
  \beginpgfgraphicnamed{tikz/fig1}
  \begin{tikzpicture}[inner sep=0,x=20pt,y=20pt,font=\footnotesize]
   \draw[->] (0,0) -- (0,5);
   \draw[->] (0,0) -- (5,0);
   \draw[very thick] (0,2) -- (2,2);
   \draw[very thick] (2,0) -- (2,2);
   \draw[very thick] (5,5) -- (2,2);
   \filldraw (2,0) circle (2pt);
   \filldraw (0,2) circle (2pt);
   \filldraw (2,2) circle (2pt);
   \node at (0,-0.5) {$0$};
   \node at (2,-0.5) {$1$};
   \node at (4.8,-0.5) {$a_1$};
   \node at (-0.5,0) {$0$};
   \node at (-0.5,2) {$1$};
   \node at (-0.5,4.6) {$a_2$};
   \node at (5.8,4.6) {$\R_{\geq0}^2$};
   \node[font=\tiny] at (1.1,2.4) {$a_2=1\geq a_1$};
   \node[font=\tiny] at (3.2,1) {$a_1=1\geq a_2$};
   \node[font=\tiny] at (4.3,3) {$a_1=a_2\geq 1$};
  \end{tikzpicture}
 \hspace{0.9cm}
  \begin{tikzpicture}[inner sep=0,x=20pt,y=20pt]
   \node at (0,0) {};
   \node at (0,3) {or};
  \end{tikzpicture}
 \hspace{0.9cm}
  \begin{tikzpicture}[inner sep=0,x=20pt,y=20pt,font=\footnotesize]
   \draw[->] (3,1) -- (3,5);
   \draw[->] (1,3) -- (5,3);
   \draw (0,1) -- (0,5);
   \draw (1,0) -- (5,0);
   \draw[dotted] (0,0) -- (0,1);
   \draw[dotted] (0,0) -- (1,0);
   \filldraw (0,0) circle (1pt);
   \draw[very thick] (1,3) -- (3,3);
   \draw[very thick] (3,1) -- (3,3);
   \draw[very thick] (5,5) -- (3,3);
   \draw[very thick,dotted] (3,0) -- (3,1);
   \draw[very thick,dotted] (0,3) -- (1,3);
   \filldraw (3,0) circle (2pt);
   \filldraw (0,3) circle (2pt);
   \filldraw (3,3) circle (2pt);
   \node at (0,-0.5) {$-\infty$};
   \node at (3,-0.5) {$0$};
   \node at (4.8,2.5) {$\log a_1$};
   \node at (-0.8,0) {$-\infty$};
   \node at (-0.5,3) {$0$};
   \node at (2.2,4.6) {$\log a_2$};
   \node at (6.5,4.6) {$\big(\R\cup\{-\infty\}\big)^2$};
  \end{tikzpicture}
  \endpgfgraphicnamed
 \]
 where the illustration on the left hand side uses the natural coordinates $(a_1,a_2)$ in $\bT^2=\R_{\geq0}^2$ and the illustration on the right hand side follows the more common convention of double-logarithmic coordinates $(\log a_1,\log a_2)$ in $\big(\R\cup\{-\infty\}\big)^2$.
\end{ex}


\section{The Kajiwara-Payne tropicalization as a rational point set}
\label{section: recovering the Kajiwara-Payne tropicalization}

We begin this section with a review of Berkovich spaces and the Kajiwara-Payne tropicalization before we explain how to recover them as rational point sets of a scheme theoretic tropicalization.


\subsection{The Berkovich analytification}
\label{subsection: the Berkovich analytification}

Let $k$ be a field together with a nonarchimedean absolute value $v:k\to \R_{\geq0}$. Let $f:k\to R$ be a $k$-algebra and $X=\Spec R$. As a topological space, the \emph{Berkovich space of $X$} is the set
\[
 X^\an \ = \ \big\{\text{nonarchimedean seminorms }w:R\to\R_{\geq0}\text{ such that }w\circ f=v \big\}
\]
together with the compact-open topology with respect to the discrete topology for $R$ and the Euclidean topology of $\R_{\geq0}$. In other words, the topology of $X^\an$ is the coarsest topology such that the maps
\[
 \begin{array}{cccc}
   \ev_a: & X^\an            & \longrightarrow & \R_{\geq0} \\
          & w:R\to\R_{\geq0} & \longmapsto     & w(a)
 \end{array}
\]
are continuous for all $a\in R$. Thus the topology of $X^\an$ is generated by the open subsets of the form
\[
 U_{a,V} \ = \ \big\{ \, w:R\to\R_{\geq0} \, \big| \, w(a)\in V \, \big\}
\]
where $a\in R$ and $V\subset\R_{\geq0}$ is an open subset.

\begin{ex}\label{ex: Berkovich analytification}
 Berkovich analytifications tend to be rather involved topological spaces. In the case of curves, one can find a description in Berkovich's book \cite{Berkovich90}. For the purpose of illustration, we will give the description of an easy case of a Berkovich analytification, which occurs for the trivial absolute value $v:k\to\R_{\geq0}$ with $v(a)=1$ for all $a\in k^\times$ and the affine line $X=\A^1_k=\Spec k[T]$.
 
 The nonarchimedean seminorms $w:k[T]\to\R_{\geq0}$ extending the trivial absolute value $v:k\to\R_{\geq0}$ are classified as follows:
 \begin{itemize}
  \item the \emph{trivial norm} $w_0$ with $w_0(g)=1$ whenever $g\neq 0$;
  \item for every irreducible polynomial $f\in k[T]$ and every $r\in (0,1)$ the \emph{$f$-adic norm} $w_{f,r}$ with $w_{f,r}(f^ig/h)=r^i$ whenever $gh$ is not divisible by $f$;
  \item for every irreducible polynomial $f\in k[T]$ the seminorm $w_{f,0}$ with $w_{f,0}(g)=0$ if $g$ is divisible by $f$ and $w_{f,0}(g)=1$ if not;
  \item for every $r\in (0,1)$ the \emph{$\infty$-adic norm} $w_{\infty,r}$ with $w_{\infty,r}(g/h)=r^{\deg h-\deg g}$ for $gh\neq0$.
 \end{itemize}
 The analytification $X^\an$ can be depicted as 
 \[
  \beginpgfgraphicnamed{tikz/fig2}
  \begin{tikzpicture}[inner sep=0,x=25pt,y=25pt,font=\tiny]
   \node (origin) {};
   \foreach \a in {1,...,32}{\draw[gray] (0,0) -- ++(\a*360/32:2);}
   \filldraw (0,0) circle (2pt);
   \filldraw (45:1) circle (2pt);
   \filldraw (180:1) circle (2pt);
   \filldraw (270:1) circle (2pt);
   \filldraw (180:2) circle (2pt);
   \filldraw (270:2) circle (2pt);
   \draw[fill=white] (45:2) circle (2pt);
   \node[rounded corners=7pt,fill=white,text height=1.4ex,text depth=0.6ex,font=\tiny] at (0.4,-0.05) {$\, w_0\, $};
   \node[rounded corners=0pt,fill=white,text height=1.1ex,text depth=0.6ex,font=\tiny] at (1.3,0.55) {$\,w_{\infty,r}$};
   \node[rounded corners=0pt,fill=white,text height=1.0ex,text depth=0.6ex,font=\tiny] at (-0.95,-0.3) {$\, w_{f,r}\,$};
   \node[rounded corners=0pt,fill=white,text height=1.0ex,text depth=0.6ex,font=\tiny] at (0.4,-1.05) {$w_{g,r}\,$};
   \node at (-2.45,-0.05) {$w_{f,0}$};
   \node at (0.05,-2.35) {$w_{g,0}$};
  \end{tikzpicture}
  \endpgfgraphicnamed
 \]  
 where $f$ and $g$ are irreducible polynomials in $k[T]$ and where one has to imagine an infinite number of rays emerging from the center $w_0$. The empty circle at the end of the ray of $w_{\infty,r}$ indicates the missing point at infinity. The topology of $X^\an$ is generated by open subsets of the following forms:
 \begin{itemize}
  \item $U_{f,I}=\{w_{f,r}|r\in I\}$ where $f\in k[T]$ is irreducible and $I\subset (0,1]$ is open or $f=\infty$ and $I\subset (0,1)$ is open;
  \item $V_{f,I}=X^\an-\{w_{f,r}|r\in I\}$ where $f\in k[T]$ is irreducible and $I\subset (0,1]$ is closed or $f=\infty$ and $I\subset (0,1)$ is closed.
 \end{itemize}
\end{ex}


\subsection{The Kajiwara-Payne tropicalization}
\label{subsection: the Kajiwara-Payne tropicalization}

The tropicalization of an affine $k$-scheme $X=\Spec R$ along a nonarchimedean absolute value $v:k\to \R_{\geq0}$ requires an additional choice of coordinates. Such a choice is given by a closed $k$-linear immersion $\iota:X\to T$ into an affine toric variety $T$, which is an affine $k$-scheme of the form $T=\Spec k[A]$ for a suitable multiplicative and commutative monoid $A$. To be precise, $T$ is a toric variety if $A$ satisfies the following conditions:
\begin{enumerate}
 \item $A$ is  finitely generated as a monoid;
 \item $A$ is \emph{integral}, i.e.\ $A$ can be embedded as a submonoid in a group $G$;
 \item $A$ is \emph{saturated}, i.e.\ if $A^\gp$ is the subgroup of $G$ generated by $A$ and $a^n\in A$ for some $a\in A^\gp$ and $n\geq 1$, then $a\in A$.
\end{enumerate}

The closed immersion $\iota:X\to T$ corresponds to a surjective $k$-algebra homomorphism $\iota^\sharp:k[A]\to R$. The \emph{ideal of definition for $X$} is the ideal $I=\ker(\iota^\sharp)$ of $k[A]$. The restriction of $\iota^\sharp$ to $A$ yields a morphism $\iota^\sharp_A:A\to R$ of multiplicative monoids. 

Consider an element $p=\sum c_a a$ in $k[A]$ where $c_a\in k$ and $a\in A$. We define $p^\trop=\sum v(c_a)a$ as a finite formal $\R_{\geq0}$-linear combinations of elements of $A$. We define $I^\trop=\{p^\trop|p\in I\}$. 

Let $\R_{\geq0}^A=\Hom(A,\R_{\geq0})$ be the set of maps $f:A\to\R_{\geq0}$ with $f(1)=1$ and $f(ab)=f(a)f(b)$ for all $a,b\in A$. It comes together with the compact-open topology with respect to the discrete topology for $A$ and the Euclidean topology of $\R_{\geq0}$. It is generated by the open subsets of the form
\[
 U_{a,V} \ = \ \big\{ \, f\in\R_{\geq0}^A \, \big| \, f(a)\in V \, \big\}
\]
where $a\in A$ and $V\subset\R_{\geq0}$ is an open subset.

Let $p=\sum c_aa$ be a finite $\R_{\geq0}$-linear combination. The \emph{zero set} or \emph{bend locus of $p$} is the subset
\[
 Z(p) \ = \ \big\{ \, f\in\R_{\geq0}^A \, \big| \, \text{the maximum occurs twice in }\{c_af(a)\}_{a\in A} \, \big\}
\]
of $\R_{\geq0}^A$. It is a closed subset of $\R_{\geq0}^A$ since it can be written as the intersection of closed analytic subsets of $\R_{\geq0}^A$. We consider $Z(p)$ together with the subspace topology.

The \emph{tropicalization of $X$ along $v:k\to\R_{\geq0}$ with respect to $\iota:X\to T$} is the closed subspace
\[
 X^\trop \ = \ \bigcap_{p\in I^\trop} \, Z(p)
\]
of $\R_{\geq0}^A$.

The \emph{Kajiwara-Payne tropicalization of $X$ along $v:k\to\R_{\geq0}$ with respect to $\iota:X\to T$} is the map 
\[
 \begin{array}{cccc}
  \trop: & X^\an      & \longrightarrow & X^\trop. \\
         &  w:R\to\R_{\geq0}  & \longmapsto     & w\circ \iota^\sharp_A
 \end{array}
\]
Note that the composition $f=w\circ \iota^\sharp_A$ is obviously in $\R_{\geq0}^A=\Hom(A,\R_{\geq0})$. If $0=\sum c_aa$ in $R$ where $a\in A$ and $c_a\in k$, then by Lemma \ref{lemma: strong version of the strict triangular inequality} the maximum among the $v(c_a)w(a)$ is assumed twice. Thus $f$ lies indeed in $X^\trop$. Note further that the map $\trop:X^\an\to X^\trop$ is continuous, which is immediately clear from the definitions of the respective topologies as compact-open topologies.

\begin{rem}\label{rem: the Kajiwara-Payne tropicalization map is surjective}
 The Kajiwara-Payne tropicalization $\trop:X^\an\to X^\trop$ is moreover proper and surjective, due to the following argument that was explained to the author by Sam Payne. In the case that $v$ is trivial or that $k$ algebraically closed and complete with respect to $v$, then this is proven in \cite{Payne09}. 
 
 If $k$ is arbitrary with non-trivial absolute value $v:k\to\R_{\geq0}$, then we can reduce the claim to the corresponding claim of a suitable field extension of $k$ by the following arguments. We write $X_K$ for the base change of $X$ to a field extension $K$ of $k$. If $\hat{k}$ is the completion of $k$ with respect to $v$ and $\hat v$ is the canonical extension of $v$ to $\hat k$, then the Berkovich spaces $X^\an$ and $(X_{\hat k})^\an$ agree since every nonarchimedean seminorm $R\to \R_{\geq0}$ extending $v$ extends uniquely to a nonarchimedean seminorm on $\hat R=R\otimes_k\hat k$ that extends $\hat v$.
 
 Since $\hat k$ is complete, the absolute value $\hat v$ extends uniquely to the algebraic closure $K$ of $\hat k$. As shown in \cite{Berkovich90}, $(X_{\hat k})^\an$ is the quotient of $(X_K)^\an$ by the action of the Galois group of $K$ over $\hat k$. Completing $K$ yields an algebraically closed and complete field with $(X_{\hat K})^\an=(X_K)^\an$, to which Payne's result from \cite{Payne09} applies. Thus we obtain a proper and surjective map
 \[
  (X_{\hat K})^\an \ = \ (X_K)^\an \ \longrightarrow \ (X_{\hat k})^\an \ = \ X^\an \ \stackrel{\trop}\longrightarrow \ X^\trop.
 \]
 Clearly this implies that $\trop$ is surjective. Given a compact subset $Z$ of $X^\trop$, then its inverse image $Z'$ in $(X_{\hat K})^\an$ is compact as the inverse image under a proper map. Since the projection $(X_{\hat K})^\an\to X^\an$ is continuous, the image $Z''$ of $Z'$ is compact in $X^\an$ is compact. Since $Z''=\trop^{-1}(Z)$, this shows that $\trop$ is proper. 
\end{rem}

\begin{rem}\label{rem: corner locus as zero set using the tropical hyperfield}
 For the following reinterpretation of $Z(p)$, we identify $\R_{\geq0}=\bT$ as sets. Let $f\in\R_{\geq0}^A$ and $p=\sum c_aa\in\N[A]^+$. By Lemma \ref{lemma: characterization of monomial relations in the tropical hyperfield}, the maximum among the terms $\{c_af(a)\}$ occurs twice if and only if $0\leq \sum c_af(a)$ as elements of $\bT^+$. Thus the identification $\R_{\geq0}=\bT$ leads to an identification
 \[\textstyle
  Z(p) \ = \ \big\{ \, f\in\bT^A \, \big| \, 0\leq \sum c_af(a) \,\big\}.
 \]
 This expression for the zero set of $p=\sum c_aa$ stays in direct analogy to the zero set of a function $p\in k^A$ over a field $k$. One can find a similar expression for $Z(p)$ using Izhakian's extended tropical semiring in \cite{Izhakian-Rowen10}; also cf.\ Remark \ref{rem: extended tropical numbers}.
\end{rem}

\begin{ex}\label{ex: Kajiwara-Payne tropicalization}
 We explain the Kajiwara-Payne tropicalization in the example of the trivial absolute value $v:k\to\R_{\geq0}$ and the line in $\A^2_k=\Spec k[T_1,T_2]$ given by $T_1+T_2+1$, i.e.\ $X=\Spec R$ with $R=k[T_1,T_2]/I$ where the ideal of definition is $I=(T_1+T_2+1)$.
 
 Then $X$ is isomorphic to $\A^1_k$ for which we have calculated the Berkovich analytification in Example \ref{ex: Berkovich analytification}. The tropicalization $X^\trop$ is the standard plane tropical line as illustrated in Example \ref{ex: topologies of tropical plane and tropical plane line}. 
 
 Let $A=\{T_1^{e_1}T_2^{e_2}|e_1,e_2\in\N\}$. Note that the map $\iota_A^\sharp: A \to k[T_1,T_2] \to R$ is injective. Since the classes of $T_1$ and $T_2$ in $R$ are irreducible, it follows immediately from the definition of the seminorm $w_{f,r}:R\to\R_{\geq0}$ in Example \ref{ex: Berkovich analytification} that the restriction $w_{f,r}\circ\iota_A^\sharp$ of $w_{f,r}$ to $A$ is trivial unless $f$ is one of $T_1$, $T_2$ or $\infty$. Thus the Kajiwara-Payne tropicalization can be illustrated as 
 \[
  \beginpgfgraphicnamed{tikz/fig3}
  \begin{tikzpicture}[inner sep=0,x=25pt,y=25pt,font=\tiny]
   \node (origin) {};
   \foreach \a in {1,...,32}{\draw[gray] (0,0) -- ++(\a*360/32:2);}
   \draw[very thick,black] (0,0) -- ++(45:2);
   \draw[very thick,black] (0,0) -- ++(270:2);
   \draw[very thick,black] (0,0) -- ++(180:2);
   \filldraw (0,0) circle (2pt);
   \filldraw (45:1) circle (2pt);
   \filldraw (180:1) circle (2pt);
   \filldraw (270:1) circle (2pt);
   \filldraw (180:2) circle (2pt);
   \filldraw (270:2) circle (2pt);
   \draw[fill=white] (45:2) circle (2pt);
   \node[rounded corners=7pt,fill=white,text height=1.4ex,text depth=0.6ex,font=\tiny] at (0.4,-0.05) {$\,w_0\,$};
   \node[rounded corners=0pt,fill=white,text height=1.1ex,text depth=0.6ex,font=\tiny] at (1.2,0.55) {$\,w_{\infty,r}$};
   \node[rounded corners=0pt,fill=white,text height=1.0ex,text depth=0.6ex,font=\tiny] at (-0.95,-0.3) {$\, w_{T_1,r}\,$};
   \node[rounded corners=0pt,fill=white,text height=1.0ex,text depth=0.6ex,font=\tiny] at (0.5,-1.05) {$w_{T_2,r}\,$};
   \node at (-2.5,-0.05) {$w_{T_1,0}$};
   \node at (0.05,-2.35) {$w_{T_2,0}$};
   \draw[very thick,->] (3,0) -- (6,0);
   \node[font=\normalsize] at (4.5,0.3) {$\trop$};
   \node[font=\normalsize] at (2.5,1.6) {$X^\an$};
   \node[font=\normalsize] at (7,1.6) {$X^\trop$};
   \draw[very thick] (9,0) -- ++(45:2);
   \draw[very thick] (9,0) -- ++(180:2);
   \draw[very thick] (9,0) -- ++(270:2);
   \filldraw (9,0) circle (2pt);
   \filldraw (9,0)++(45:1) circle (2pt);
   \filldraw (9,0)++(180:1) circle (2pt);
   \filldraw (9,0)++(270:1) circle (2pt);
   \filldraw (9,0)++(180:2) circle (2pt);
   \filldraw (9,0)++(270:2) circle (2pt);
   \draw[fill=white] (9,0)++(45:2) circle (2pt);
   \node at (9.6,-0.05) {$w_0\circ\iota_A^\sharp$};
   \node at (10.5,0.65) {$w_{\infty,r}\circ\iota_A^\sharp$};
   \node at (8.3,-0.3) {$w_{T_1,r}\circ\iota_A^\sharp$};
   \node at (9.75,-1.05) {$w_{T_2,r}\circ\iota_A^\sharp$};
   \node at (6.8,-0.3) {$w_{T_1,0}\circ\iota_A^\sharp$};
   \node at (9.05,-2.35) {$w_{T_2,0}\circ\iota_A^\sharp$};
  \end{tikzpicture}
  \endpgfgraphicnamed
 \] 
 where all thin rays of $X^\an$ are contracted to the central point $w_0\circ\iota_A^\sharp$ of $X^\trop$ and the three thick rays of $X^\an$ are mapped bijectively to the three rays of $X^\trop$.
\end{ex}


\subsection{Recovering the analytification and tropicalization as rational point sets}
\label{subsection: analytification and tropicalization as rational point sets}

We continue with the context of the previous sections and define $\bk=k^\mon$, $\bR=R^\mon$ and $\bX=\Spec\bR$. Let $\pi:\bk[A]\to\bR$ be the morphism that maps $ca$ to $\iota^\sharp(ca)$ where $c\in \bk=k$ and $a\in A$. We define
\[\textstyle
 B \ = \ \bpgenquot{\bk[A]}{c_aa\leq \sum c_jb_j|\pi(c_aa)\leq\sum \pi(c_jb_j)\text{ in }\bR},
\]
whose underlying monoid is the submonoid $B^\bullet=\{\pi(ca)|c\in k, a\in A\}$ of $R$, whose ambient semiring is $B^+=(B^\bullet)^+$ and whose partial order is generated by the relations $\pi(c_aa)\leq \sum \pi(c_jb_j)$ with $c_a,c_j\in k$ and $a,b_j\in B^\bullet$ for which $\iota^\sharp(c_aa)=\sum \iota^\sharp(c_jb_j)$ in $R$. We define $Y=\Spec B$. Note that the inclusion $B\to\bR$ defines a $\bT$-linear morphism $\beta:\Trop_\bv(\bX)\to \Trop_{\bv}(Y)$.

\begin{thm}\label{thm: recovering the KP tropicalization from the scheme theoretic tropicalization}
 There are canonical homeomorphisms $\Phi^\an:X^\an\to\Trop_\bv(\bX)(\bT)$ and $\Phi^\trop:X^\trop\to\Trop_\bv(Y)(\bT)$ such that the diagram
 \[
  \begin{tikzcd}[column sep=3cm]
   X^\an \ar{r}{\trop} \ar{d}[left]{\Phi^\an} & X^\trop \ar{d}[right]{\Phi^\trop} \\
   \Trop_\bv(\bX)(\bT) \ar{r}{\beta_\ast} & \Trop_{\bv}(Y)(\bT)
  \end{tikzcd}
 \]
 commutes. 
\end{thm}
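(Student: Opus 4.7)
The plan is to obtain both $\Phi^\an$ and $\Phi^\trop$ as composites of the tensor-hom adjunction with the identification of $\bk$-linear morphisms into $\bT$ as nonarchimedean seminorms, and then to check commutativity and the topological statements separately.

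First I would handle $\Phi^\an$: the universal property of $\otimes_\bk$ yields $\Trop_\bv(\bX)(\bT) = \Hom_\bT(\bR\otimes_\bk\bT, \bT) \cong \Hom_\bk(\bR, \bT)$, and Corollary \ref{cor: k-linear seminorms as morphisms} identifies the right-hand side with $X^\an$. For $\Phi^\trop$ the same adjunction gives $\Trop_\bv(Y)(\bT) \cong \Hom_\bk(B, \bT)$, and the main task is to match this with $X^\trop$. A $\bk$-linear morphism $\bar w: B \to \bT$ is determined by the multiplicative map $f: A \to \R_{\geq0}$ with $f(a) := \bar w(\pi(a))$, since $\bk$-linearity forces $\bar w(\pi(ca)) = v(c) f(a)$ on every generator of $B^\bullet$. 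I would then show that the defining relations of $B$ correspond exactly to the bend conditions $f \in Z(p^\trop)$ for all $p \in I$. Given $f \in X^\trop$, every defining relation $\pi(c_a a) \leq \sum \pi(c_j b_j)$ arises from an element $p = c_a a - \sum c_j b_j \in I$, and Lemma \ref{lemma: strong version of the strict triangular inequality} applied to $p$ together with Lemma \ref{lemma: characterization of monomial relations in the tropical hyperfield} yields the required tropical inequality $v(c_a) f(a) \leq \sum v(c_j) f(b_j)$ in $\bT$. Conversely, given $\bar w$ and any $p = \sum c_a a \in I$, picking any $a_0$ with $c_{a_0} \neq 0$ and rewriting $c_{a_0} a_0 = -\sum_{a \neq a_0} c_a a$ in $R$ produces a defining relation of $B$, and Lemma \ref{lemma: characterization of monomial relations in the tropical hyperfield} then delivers $f \in Z(p^\trop)$.

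Commutativity of the diagram unwinds directly: for $w \in X^\an$, $\beta_\ast(\Phi^\an(w))$ is the restriction of the associated $\bar w: \bR \to \bT$ along $B \hookrightarrow \bR$, whose corresponding function sends $a \in A$ to $w(\iota_A^\sharp(a))$, which is precisely $\trop(w) = w \circ \iota_A^\sharp$. For the homeomorphism claim, both $X^\an$ and $X^\trop$ carry the compact-open topologies with respect to evaluation at elements of $R$ and of $A$ respectively; under $\Phi^\an$ and $\Phi^\trop$ these match the affine topologies on the corresponding $\bT$-rational point sets. On the $Y$ side, the additional evaluations $\ev_{\pi(ca)} = v(c) \cdot \ev_{\pi(a)}$ are continuous composites of $\ev_{\pi(a)}$ with multiplication by the positive real $v(c)$, so they introduce no new open sets.

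The hard part will be the bijection step for $\Phi^\trop$. The defining relations of $B$ use uncombined terms $\sum \pi(c_j b_j)$, whereas elements of $I$ and their tropicalizations $p^\trop$ naturally combine like coefficients. Translating cleanly between the two perspectives requires careful bookkeeping with Lemmas \ref{lemma: strong version of the strict triangular inequality} and \ref{lemma: characterization of monomial relations in the tropical hyperfield}, so that combining or splitting summands neither loses nor adds information to the ``max occurs twice'' condition. Once this is in place, the rest of the argument is essentially formal.
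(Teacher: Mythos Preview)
Your proposal is correct and follows essentially the same route as the paper: both obtain $\Phi^\an$ from Corollary~\ref{cor: k-linear seminorms as morphisms} via the universal property of $\otimes_\bk$ (the paper simply does not name the tensor--hom adjunction) and construct $\Phi^\trop$ by matching the defining relations of $B$ against the bend conditions through Lemma~\ref{lemma: characterization of monomial relations in the tropical hyperfield}. The one packaging difference is that for the homeomorphism claim on the $Y$ side the paper first reduces to the toric ambient $B=\bk[A]$ via Theorem~\ref{thm: properties of rational point sets}~\ref{T3} before comparing generators, whereas you compare evaluation maps directly on $X^\trop$; your identification of the combined-versus-uncombined coefficient bookkeeping as the delicate step (to be handled with Lemma~\ref{lemma: strong version of the strict triangular inequality} applied to the coefficient sums in $k$) is apt and is in fact glossed over in the paper's own argument.
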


\begin{proof}
 As a first step, we define the canonical map $\Phi^\an:X^\an\to\Trop_\bv(\bX)(\bT)$. Let $w:R\to\R_{\geq0}$ be a seminorm in $X^\an$ and $\bw:\bR\to\bT$ the associated morphism from Corollary \ref{cor: k-linear seminorms as morphisms}. We define the $\bT$-linear morphism $f=\Phi^\an(w):\bR\otimes_\bk\bT\to\bT$ as the morphism induced by $\bw$ and the identity $\bT\to\bT$. It sends an element $r\otimes t$ to $t\cdot \bw(r)$.
 
 Conversely, given a $\bT$-linear morphism $f:\bR\otimes_\bk\bT\to\bT$, we define $w=\Psi^\an(f)$ as the seminorm associated with the composition $\bR\to\bR\otimes_\bk\bT\to\bT$. Since the composition $\bk\to\bR\to\bR\otimes_\bk\bT\to\bT$ is equal to $\bv:\bk\to\bT$, the restriction of $w$ to $k$ is $v$. This defines a map $\Psi^\an:\Trop_\bv(\bX)(\bT)\to X^\an$. It is clear that the maps $\Phi^\an$ and $\Psi^\an$ are mutually inverse bijections.
 
 We continue with showing that both $\Phi^\an$ and $\Psi^\an$ are open. This can be verified on the generators of the topologies of $X^\an$ and $\Trop_\bv(\bX)(\bT)$. Consider such a generator $U_{r,V}=\{w:R\to\R_{\geq0}|w(r)\in V\}$ of the topology of $X^\an$ where $r\in R$ and $V\subset \R_{\geq0}$ is open. Then we can consider $r$ as an element of $\bR$ and $V$ as an open subset of $\bT$ and find that 
 \[
  \Phi^\an(U_{r,V}) \ = \ \big\{ \ f:\bR\otimes_\bk\bT\to\bT \ \big| \ f(r\otimes 1)\in V \ \big\} \ = \ U_{r\otimes 1,V},
 \]
 which is a generator of the topology of $\Trop_\bv(\bX)(\bT)$. Conversely, consider a generator $U_{r\otimes t,V}=\{f:\bR\otimes_\bk\bT\to\bT|f(r\otimes t)\in V\}$ of the topology of $\Trop_\bv(\bX)(\bT)$ where $r\otimes t\in\bR\otimes_\bk\bT$ and $V\subset \bT$ is open. 
 
 Since $f$ is $\bT$-linear, we have $f(r\otimes t)=tf(r\otimes 1)$, and $tf(r\otimes 1)\in V$ is equivalent with $f(r\otimes 1)\in V_t=\{a\in\bT|at\in V\}$. Since the multiplication in $\bT$ is continuous, $V_t$ is open in $\bT$. Considering $r$ as an element in $R$ and $V_t$ as an open subset of $\R_{\geq0}$ yields that 
 \[
  \Psi^\an(U_{r\otimes t,V}) \ = \ \big\{ \ w:R\to\R_{\geq0} \ \big| \ w(r)\in V_t \ \big\} \ = \ U_{r,V_t},
 \]
 which is a generator of the topology of $X^\an$. This concludes the proof that $\Phi^\an$ is a homeomorphism.
 
 We turn to the definition of the homeomorphism $\Phi^\trop:X^\trop\to\Trop_\bv(Y)(\bT)$. Consider a map $f:A\to\R_{\geq0}$ in $X^\trop$. We define $g=\Phi^\trop(f):B\otimes_\bk\bT\to\bT$ by the rule $g(ca\otimes t)=\bv(c)f(a)t$ for $c\in k$, $a\in A$ and $t\in\bT$.
 
 This association is well-defined as a map since 
 \[
  g(ca\otimes t) \ = \ \bv(c)f(a)t \ = \ f(a)\cdot (\bv(c)t) \ = \ g(a\otimes \bv(c)t)
 \]
 for all $c\in \bk$, $a\in A$ and $t\in\bT$. Clearly, $g$ is multiplicative with $g(0)=0$ and $g(1)=1$. Since the semiring $(B\otimes_\bk\bT)^+$ is freely generated by the underlying monoid with zero $(B\times_\bk\bT)^\bullet$, the monoid morphism $g$ extends uniquely to a semiring morphism $g^+:(B\otimes_\bk\bT)^+\to\bT^+$. Thus we are left with showing that $g^+$ is order-preserving in order to show that $g$ is a morphism of ordered blueprints. This can be verified on the generators of the partial order of $B\otimes_\bk\bT$, which stem from the relations in $B$ and $\bT$.
 
 Since $B$ is monomial, its partial order is generated by relations of the form $c_aa\leq\sum c_jb_j$ with $a,b_j\in A$ and $c_a,c_j\in k$. Note that this corresponds to the relation $c_aa\otimes 1\leq \sum c_jb_j\otimes1$ of $B\otimes_\bk\bT$. The relation $c_aa\leq\sum c_jb_j$ implies that $c_aa\equiv\sum c_jb_j\pmod{I}$ as elements of $k[A]$ where $I=\ker(\iota^\sharp)$ is the ideal of definition for $\iota:X\to T$. In other words, $p=-c_aa+\sum c_jb_j$ is an element of $I$. Since $f\in Z(p^\trop)$, we conclude that the maximum occurs twice in $\{v(c_a)f(a),v(c_j)f(b_j)\}_j$. By Lemma \ref{lemma: characterization of monomial relations in the tropical hyperfield}, this means that $g(a\otimes1)=c_af(a)\leq \sum c_jf(b_j)=\sum g(c_jb_j\otimes 1)$ in $\bT$. 
 
 Since the composition $\bT\to B\otimes_\bk\bT\to\bT$ is the identity on $\bT$, it is clear that $g$ preserves all relations of $B\otimes_\bk\bT$ coming from $\bT$ and that $g$ is $\bT$-linear. This shows that $g=\Phi^\trop(f)$ is a $\bT$-linear morphism and thus an element of $\Trop_\bv(Y)(\bT)$.
 
 The inverse map $\Psi^\trop:\Trop_\bv(Y)(\bT)\to X^\trop$ maps a $\bT$-linear morphism $g:B\otimes_\bk\bT\to\bT$ to the map $f=\Psi^\trop(g):A\to\R_{\geq0}$ that is defined by $f(a)=g(a\otimes1)$. The map $f$ is clearly multiplicative with $f(1)=1$ and thus an element of $\R_{\geq0}^A=\Hom(A,\R_{\geq0})$. 
 
 In order to show that $f$ lies indeed in the subset $X^\trop$ of $\R_{\geq0}^A$, consider an element $p=\sum c_aa$ in the ideal of definition $I$ where $c_a\in k$ and $a\in A$. Then we have $0\leq \sum c_aa$ in $B$ and thus $0\leq \sum c_aa\otimes 1$ in $B\otimes_\bk\bT$. Note that $c_aa\otimes 1=a\otimes\bv(c_a)$. Since $g$ is $\bT$-linear, we obtain
 \[
  0 \ \leq \ \sum g(c_aa\otimes 1) \ = \ \sum \bv(c_a)g(a\otimes 1)
 \]
 in $\bT$. By Lemma \ref{lemma: characterization of monomial relations in the tropical hyperfield}, this means that the maximum occurs twice among the elements $\{\bv(c_a)g(a\otimes 1)\}_{a\in A}$. Since $v(c_a)f(a)$ corresponds to $\bv(c_a)g(a\otimes 1)$ under the identification $\R_{\geq0}=\bT$, this shows that $f$ is indeed an element of $X^\trop$.
 
 It is evident that the maps $\Phi^\trop$ and $\Psi^\trop$ are mutually inverse bijections. In order to show that $\Phi^\trop$ is a homeomorphism, we begin with a reduction step to the case of a toric variety $X=T$.
 
 Recall that the topology of $X^\trop$ is by definition the subspace topology induced from $\R_{\geq0}^A$. Since the quotient map $\bk[A]\to B$ is surjective, the induced map $\bk[A]\otimes_\bk\bT\to B\otimes_\bk\bT$ is surjective as well, which means that the morphism $\Trop_\bv(Y)\to \Trop_\bv(\Spec \bk[A])$ is a closed immersion of ordered blue schemes. By Theorem \ref{thm: properties of rational point sets} \ref{T3}, $\Trop_\bv(Y)(\bT)$ is closed topological subspace $\Trop_\bv(\Spec \bk[A])(\bT)$. Thus it suffices to prove that $\Phi^\trop$ is a homeomorphism for $X=T$ and $B=\bk[A]$. This can be verified on generators of the respective topologies of $X^\trop=\R_{\geq0}^A$ and $\Trop_\bv(Y)(\bT)$.
 
 The topology of $\R_{\geq0}^A$ is generated by open subsets of the form $U_{a,V}=\{f\in T^A|f(a)\in V\}$ where $a\in A$ and $V\subset \R_{\geq0}$ is open. We have that
 \begin{multline*}
  \Phi^\trop(U_{a,V}) \ = \ \big\{ \, \Phi^\trop(f) \,\big| \, f\in \R_{\geq0}^A\text{ with }f(a)\in V \, \big\} \\
  = \ \big\{ \, g\in \Hom_\bT(B\otimes_\bk\bT,\bT)  \, \big| \, g(a\otimes 1)\in V \, \big\} \ = \ U_{a\otimes 1,V},
 \end{multline*}
 which is an open subset of $\Trop_\bv(Y)(\bT)=\Hom_\bT(B\otimes_\bk\bT,\bT)$ where we identify $V$ with the corresponding subset of $\bT=\R_{\geq0}$.
 
 Conversely, the topology of $\Trop_\bv(Y)(\bT)$ is generated by open subsets of the form $U_{ca\otimes t,V}=\{g:B\otimes_\bk\bT\to\bT|g(ca\otimes t)\in V\}$ with $c\in\bk$, $a\in A$, $t\in T$ and $V\subset\bT$ open. We have $ca\otimes t=a\otimes \bv(c)t$. Since the multiplication of $\bT$ is continuous, the subset $V_{\bv(c)t}=\{a\in\bT|\bv(c)ta\in V\}$ is open as well. Since $g(a\otimes \bv(c)t)=\bv(c)tg(a\otimes 1)\in V$ if and only if $g(a\otimes 1)\in V_{\bv(c)t}$, we find that
 \begin{multline*}
  \Psi^\trop(U_{ca\otimes t,V}) \ = \ \big\{ \, \Psi^\trop(g) \,\big| \, g\in \Hom_\bT(B\otimes_\bk\bT,\bT)\text{ with }g(ca\otimes t)\in V \, \big\} \\
  = \ \big\{ \, f\in\R_{\geq0}^A \, \big| \, f(a)\in V_{\bv(c)t} \, \big\} \ = \ U_{a,V_{\bv(c)t}},
 \end{multline*}
 which is an open subset in $\R_{\geq0}^A$ where we identify $V_{\bv(c)t}$ with the corresponding open subset of $\R_{\geq0}=\bT$. This completes the proof that $\Phi^\trop:X^\trop\to\Trop_\bv(Y)(\bT)$ is a homeomorphism.
 
 The last step of the proof concerns the commutativity of the diagram
 \[
  \begin{tikzcd}[column sep=3cm]
   X^\an \ar{r}{\trop} \ar{d}[left]{\Phi^\an} & X^\trop \ar{d}[right]{\Phi^\trop} \\
   \Trop_\bv(\bX)(\bT) \ar{r}{\beta_\ast} & \Trop_{\bv}(Y)(\bT).
  \end{tikzcd}
 \]
 Consider an element of $X^\an$, which is a seminorm $w:R\to \R_{\geq0}$ that extends $v:k\to\R_{\geq0}$. Then $\trop(w)=w\circ\iota^\sharp_A$ and $\Phi^\trop(\trop(w)):B\otimes_\bk\bT\to\bT$ maps $ca\otimes t$ to $\bv(c)w(\iota^\sharp_A(a))t$, which is the same as $w(\iota^\sharp(ca))t$.
 
 On the other side, $\Phi^\an(w):\bR\otimes_\bk\bT\to\bT$ maps $r\otimes t$ to $w(r)t$. The image of $\Phi^\an(w)$ under $\beta_\ast$ is the morphism $B\otimes_\bk\bT\to\bT$ that maps $ca\otimes t$ to $w(\iota^\sharp(ca))t$. This shows that $\Phi^\trop(\trop(w))=\beta_\ast(\Phi^\an(w))$ and that the diagram commutes.
\end{proof}

The following description of $B\otimes_\bk\bT$ is useful for the calculation of explicit examples.

\begin{lemma}\label{lemma: the base change to the tropical hyperfield as quotient of a free algebra}
 The association $ca\otimes t\mapsto \big(v(c)t\big)\cdot a$ defines an $\bT$-linear isomorphism 
 \[\textstyle
  f: \quad B\otimes_\bk\bT \ \stackrel\sim\longrightarrow \ \bigbpgenquot{\bT}{v(c_a)a\leq\sum v(c_j)b_j \, \big| \, \pi(c_aa)=\sum \pi(c_jb_j) \text{ in }R}
 \]
 of ordered blueprints where $c_a,c_j\in \bk$ and $a,b_j\in A$.
\end{lemma}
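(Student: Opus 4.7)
The plan is to trace the isomorphism through two standard base-change compatibilities—free algebras and quotients—starting from the presentation of $B$ as a quotient of $\bk[A]$. First, I would establish the canonical isomorphism $\bk[A] \otimes_\bk \bT \simeq \bT[A]$ sending $ca \otimes t$ to $v(c) t \cdot a$. This is an instance of the compatibility of free algebras with base change (cf.\ Example \ref{ex: tensor products of ordered blueprints}): the universal properties of $\bk[A]$ and of the tensor product together show that $\bT$-linear morphisms out of $\bk[A] \otimes_\bk \bT$ into any ordered blue $\bT$-algebra $D$ correspond to monoid morphisms $A \to D^\bullet$, which is exactly what $\bT[A]$ classifies. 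Concretely, $ca \otimes t$ becomes $(1 \otimes \bv(c)t)\cdot(a \otimes 1) = v(c)t \cdot a$, using the $\bk$-linearity relation $c \otimes 1 = 1 \otimes \bv(c)$.

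Second, I would exploit the compatibility of tensor products with quotients by relations. By construction,
$$ B \ = \ \bpgenquot{\bk[A]}{c_a a \leq \sum c_j b_j} $$
indexed by the equations $\pi(c_a a) = \sum \pi(c_j b_j)$ in $R$. Combining the universal property of quotients (Section \ref{subsection: quotients by relations}) with that of tensor products (Section \ref{subsection: tensor products}) yields
$$ B \otimes_\bk \bT \ \simeq \ \bpgenquot{\bk[A] \otimes_\bk \bT}{(c_a a \otimes 1) \leq \sum (c_j b_j \otimes 1)}, $$
as both sides represent the same functor on ordered blue $\bT$-algebras.

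Third, I would transport these generators across the isomorphism from the first step: $c_a a \otimes 1 \leq \sum c_j b_j \otimes 1$ corresponds to $v(c_a) a \leq \sum v(c_j) b_j$ in $\bT[A]$, which gives the claimed description of $f$. The $\bT$-linearity of $f$ is clear since its restriction to $\bT$ is the identity, and multiplicativity and well-definedness on pure tensors are immediate from the formula.

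The main obstacle is the compatibility of tensor products with quotients in the ordered setting: unlike for ordinary semirings one must verify that the partial order generated by the image of the defining relations on $\bT[A]$ is exactly the partial order of $B \otimes_\bk \bT$, rather than merely larger. To settle this I would finish by constructing the inverse map explicitly via the universal property of the quotient: the morphism $\bT[A] \to B \otimes_\bk \bT$ sending $t \cdot a$ to $a \otimes t$ sends each defining relation $v(c_a)a \leq \sum v(c_j)b_j$ to a relation $c_a a \otimes 1 \leq \sum c_j b_j \otimes 1$ that already holds in $B \otimes_\bk \bT$ by the defining relations of $B$, so it descends to a two-sided inverse of $f$.
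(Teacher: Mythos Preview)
Your proposal is correct and follows essentially the same approach as the paper: the paper also defines $f$ by $ca\otimes t\mapsto v(c)t\cdot a$, checks well-definedness on the generating relations of $B\otimes_\bk\bT$, and then constructs the inverse $g:ta\mapsto a\otimes t$ by verifying that each relation $v(c_a)a\leq\sum v(c_j)b_j$ maps to $c_aa\otimes1\leq\sum c_jb_j\otimes1$ in $B\otimes_\bk\bT$. Your additional framing via universal properties of free algebras and quotients (steps 1--3) is a clean conceptual packaging that the paper omits, but your step 4 is exactly the paper's argument.
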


\begin{proof}
 Let us define for the sake of this proof 
 \[
  C \ = \ \bpgenquot{\bT}{v(c_a)a\leq\sum v(c_j)b_j | \pi(c_aa)=\sum \pi(c_jb_j) \text{ in }R}.
 \]
 As a first step, we observe that 
 \[
  f(ca\otimes t) \ = \ \big(v(c)t\big)\cdot a \ = \ a\otimes v(c)t,
 \]
 which shows the independence under the action of $\bk$ on tensors. Since $\bT\to B\otimes_\bk\bT\to \bT[A]$ is the identity on $\bT$, it is clear that all relation coming from $\bT$ are preserved and that $f$ is $\bT$-linear. Thus we are left with showing that $f$ preserves the relations coming from $B$, which can be verified on generators, which are of the form $c_aa\otimes 1\leq \sum c_jb_j\otimes 1$ where $\pi(c_aa)=\sum \pi(c_jb_j)$ in $R$. This means that
 \[
  f(c_aa\otimes 1) \ = \ v(c_a)a \ \leq \ \sum v(c_j)b_j \ = \ f(c_jb_j\otimes 1),
 \]
 as desired. 

 In order to show that $f$ is an isomorphism, we will prove that the association $ta\mapsto a\otimes t$ with $t\in\bT$ and $a\in A$ defines a $\bT$-linear morphism $g:C\to B\otimes_\bk\bT$, which is obviously inverse to $f$. It is clear that this association defines a $\bT$-linear multiplicative map $\bT[A]\to B\otimes_\bk\bT$. We are left with showing that it preserves the additive relations, which can be verified on generators of the form $v(c_a)a\leq\sum v(c_j)b_j$ for which $\pi(c_aa)=\sum \pi(c_jb_j)$ in $R$. By the definition of $B$, this means that $c_aa\leq \sum c_jb_j$ in $B$ and thus
 \[
  a\otimes v(c_a) \ = \ c_aa\otimes 1 \ \leq\ \sum c_jb_j\otimes 1 \ = \ \sum b_j\otimes v(c_j).
 \]
 This concludes the proof of the lemma.
\end{proof}

\begin{ex}\label{ex: the Kajiwara-Payne tropicalization from the scheme theoretic tropicalization}
 We explain Theorem \ref{thm: recovering the KP tropicalization from the scheme theoretic tropicalization} in the case of the standard plane tropical line from Example \ref{ex: Kajiwara-Payne tropicalization}.
 
 Using the context of Theorem \ref{thm: recovering the KP tropicalization from the scheme theoretic tropicalization}, let $T=\A^2_k=\Spec k[T_1,T_2]$ be the affine plane over $k$ and $X=\Spec k[T_1,T_2]/(T_1+T_2+1)$, together with its natural closed embedding $\iota:X\to T$ as the zero set of $T_1+T_2+1$. Note that here $A=\{T_1^{e_1}T_2^{e_2}|e_1,e_2\in\N\}$.
 
 In this case, we have that the partial order of 
 \[\textstyle
  B \ = \ \bpgenquot{\bk[A]}{c_aa\leq \sum c_jb_j|\pi(c_aa)\leq\sum \pi(c_jb_j)\text{ in }\bR},
 \]
 is generated by the relations
 \[
  0 \ \leq \ T_1+T_2+1, \quad -T_1 \ \leq \ T_2+1, \quad -T_2 \ \leq \ T_1+1 \quad \text{and} \quad -1 \ \leq \ T_1+T_2.
 \]
 Using Lemma \ref{lemma: the base change to the tropical hyperfield as quotient of a free algebra} and the fact that $v(-1)=1$, we derive an isomorphism
 \[
  B\otimes_\bk\bT \ \simeq \ \bigbpgenquot{\bT[T_1,T_2]}{0\leq T_1+T_2+1,\  T_1 \leq T_2+1,\  T_2 \leq T_1+1,\  1 \leq T_1+T_2}.
 \]
 Sending a $\bT$-linear morphism $f:B\otimes_\bk\bT\to\bT$ to $\big(f(T_1\otimes 1),f(T_2\otimes 1)\big)\in\bT^2$ yields thus an identification
 \begin{multline*}
  \Trop_\bv(\bX)(\bT) \ = \ \Hom_\bT(B\otimes_\bk\bT,\bT) \\ 
  = \ \big\{\, (a_1,a_2)\in\bT^2 \, \big| \, 0\leq a_1+a_2+1,\, a_1 \leq a_2+1,\, a_2 \leq a_1+1,\, 1 \leq a_1+a_2 \, \big\}.
 \end{multline*}
 By Lemma \ref{lemma: characterization of monomial relations in the tropical hyperfield}, each of the four relations on $a_1$ and $a_2$ is equivalent to the condition that the maximum among $a_1$, $a_2$ and $1$ occurs twice. This equals the bend locus of $T_1+T_2+1$, which is exactly $X^\trop$, cf.\ Examples \ref{ex: topologies of tropical plane and tropical plane line} and \ref{ex: Kajiwara-Payne tropicalization}.
\end{ex}


\section{The relation between the tropical hyperfield and the tropical semifield}
\label{section: the relation between the tropical hyperfield and the tropical semifield}

In this section, we will explain how to recover the tropical semifield from the tropical hyperfield using some functorial constructions for ordered blueprints. The precise relation is formulated in Proposition \ref{prop: projection from the tropical hyperfield to the totally positive tropical semifield}, which is a key fact for our description of the Giansiracusa bend in terms of the scheme theoretic tropicalization in section \ref{section: recovering the bend}.

All except for Proposition \ref{prop: projection from the tropical hyperfield to the totally positive tropical semifield} is covered already in \cite{Lorscheid15} and \cite{Lorscheid18}, but we present an independent and streamlined exposition, including a shortened proof of Lemma \ref{lemma: an idempotent blueprint is isomorphic to the core of the associated totally positive ordered blueprint}.


\subsection{The tropical semifield as an algebraic blueprint}
\label{subsection: the tropical semifield as an algebraic blueprint}

Let $\overline\R=\R_{\geq0}$ be the \emph{tropical semifield} whose addition is characterized by the rule $a+b=\max\{a,b\}$ and whose multiplication is the usual multiplication of real numbers. We associate with $\overline\R$ the algebraic blueprint $\T=(\overline\R,\overline\R,=)$.


\subsection{Idempotent ordered blueprints}
\label{subsection: idempotent ordered blueprints}

An ordered blueprint $B$ is \emph{idempotent} if $B^+$ is an idempotent semiring, i.e.\ if $1+1=1$. Given an ordered blueprint $B$, we define its \emph{associated idempotent ordered blueprint} as $B^\idem=\bpgenquot{B}{1+1\=1}$. The quotient map defines a canonical morphism $B\to B^\idem$.

The construction of $B^\idem$ is functorial. Given a morphism $f:B\to C$ of ordered blueprints, the composition $B\to C\to C^\idem$ factors uniquely through a morphism $f^\idem:B^\idem\to C^\idem$ by the universal property of the quotient map $B\to B^\idem$; cf.\ section \ref{subsection: quotients by relations}.

Our main example of an idempotent ordered blueprint is $\T$. Note that $\B=\F_1^\idem$ and thus $B^\idem=B\otimes_\Fun\B$ for every ordered blueprint $B$, which reinforces the functoriality of $B\mapsto B^\idem$.


\subsection{Totally positive ordered blueprints}
\label{subsection: totally positive ordered blueprints}

An ordered blueprint $B$ is \emph{totally positive} if $0\leq 1$ holds in $B$. Multiplying this relation by $x$ yields $0\leq x$ for every $x\in B^+$. Given an ordered blueprint $B$, we define its \emph{associated totally positive blueprint} as $B^\pos=\bpgenquot B{0\leq 1}$. The quotient map defines a canonical morphism $B\to B^\pos$.

The construction of $B^\pos$ is functorial: given a morphism $f:B\to C$ of ordered blueprints, the composition $B\to C\to C^\pos$ factors uniquely through a morphism $f^\pos:B^\pos\to C^\pos$ by the universal property of the quotient map $B\to B^\pos$; cf.\ section \ref{subsection: quotients by relations}.

Looking back at Example \ref{ex: quotients of ordered blueprints}, we see that $\F_1^\pos=\bpgenquot\Fun{0\leq 1}$ is indeed the totally positive ordered blueprint associated with $\Fun$. More generally, we have $B^\pos=B\otimes_\Fun\Fun^\pos$ for every ordered blueprint $B$, which reinforces the functoriality of $B\mapsto B^\idem$.

\begin{lemma}\label{lemma: an idempotent blueprint is isomorphic to the core of the associated totally positive ordered blueprint}
 If $B$ is algebraic and idempotent, then $B\to B^\pos$ is a bijection between the respective underlying monoids and $x\leq y$ in $B^\pos$ if and only if $x+y=y$ in $B^+$. The canonical morphism $B=B^\core\to(B^\pos)^\core$ is an isomorphism of ordered blueprints.
\end{lemma}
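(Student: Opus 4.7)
The strategy is to compute $B^\pos$ explicitly by unfolding the construction of the quotient in section~\ref{subsection: quotients by relations}. Recall that $(B^\pos)^+$ is $B^+/\!\!\=$, where $\=$ is the symmetric hull of the smallest preorder $\leq'$ on $B^+$ containing the trivial order $=$ of $B^\core = B$ together with $0\leq 1$, closed under addition and multiplication. Since the partial order of $B$ is already trivial, the only nontrivial input is $0\leq 1$.

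First I would show that $x\leq' y$ if and only if there exists $z\in B^+$ with $y=x+z$. The ``if'' direction is immediate: multiplying $0\leq 1$ by $z$ gives $0\leq z$, and adding $x$ gives $x\leq x+z=y$. For the ``only if'' direction, I would check by induction on the construction that the set $R=\{(x,y)\mid\exists z\colon y=x+z\}$ is reflexive ($z=0$), transitive (add the witnesses), additive, multiplicative (if $y=x+z$ and $v=u+w$, then $yv=xu+(xw+zu+zw)$), and contains both $=$ and $0\leq 1$ ($1=0+1$); since $\leq'$ is the smallest such relation, $\leq'\subseteq R$.

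Next I would exploit idempotence. Since $1+1=1$ holds in $B^+$, multiplication by an arbitrary $a\in B^+$ gives $a+a=a$; hence $B^+$ is an idempotent semiring. Consequently, whenever $y=x+z$, we have $x+y=x+x+z=x+z=y$, so $\leq'$ coincides with the relation $x\leq y\iff x+y=y$. This relation is manifestly antisymmetric (if $x+y=y$ and $y+x=x$, then $x=y+x=x+y=y$), so the equivalence relation $\=$ is just equality. Thus the quotient $B^+/\!\!\=$ is $B^+$ itself, and the partial order on $(B^\pos)^+$ is precisely $x\leq y\iff x+y=y$; this establishes the characterization claimed in the lemma.

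Finally, since the quotient map $B^+\to(B^\pos)^+$ is the identity, the underlying monoid $(B^\pos)^\bullet$ (defined as the image of $B^\bullet$) equals $B^\bullet$, giving the asserted bijection between underlying monoids. For the last statement, I observe that $(B^\pos)^\core=\bigl((B^\pos)^\bullet,(B^\pos)^+,=\bigr)=(B^\bullet,B^+,=)=B^\core=B$. The main (and essentially only) subtlety is the verification that $\leq'$ coincides with $R$, since one must be careful to distinguish the preorder generated inside $B^+$ from the partial order that survives passage to the quotient; the punchline is that idempotence makes the generated preorder automatically antisymmetric, so no nontrivial quotient occurs.
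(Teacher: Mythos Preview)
Your proof is correct and follows essentially the same route as the paper: both arguments identify the partial order of $B^\pos$ with the relation $x\leq y\iff x+y=y$ and use the same key step (from $0\leq 1$ derive $0\leq y$, hence $x=x+0\leq x+y$). The only organizational difference is that the paper starts from the target relation and verifies directly that it is the smallest additive and multiplicative partial order containing $0\leq 1$, whereas you compute the generated preorder via the intermediate relation $R=\{(x,y)\mid\exists z\colon y=x+z\}$ and then use idempotence to collapse $R$ to the target and check antisymmetry; your detour through $R$ is a small extra step but makes explicit why no nontrivial quotient occurs.
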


\begin{proof}
 This follows at once from Proposition 2.12 and Lemma 2.15 in \cite{Lorscheid15}. For completeness, we give an independent proof in the following. 
 
 Let $\leq$ be the relation on $B^+$ that is defined by the rule that $x\leq y$ if and only if $x+y=y$. We claim that $\leq$ is an additive and multiplicative partial order on $B^+$.

 The relation $\leq$ is reflexive since the idempotent relation $x+x=x$ implies $x\leq x$. It is antisymmetric since the relations $x\leq y$ and $y\leq x$ imply $x=x+y=y$. It is transitive since if $x\leq y$ and $y\leq x$, then $x+y=y$ and $y+z=z$, and thus $x+z=x+y+z=y+z=z$, which yields $x\leq z$ as desired. To prove additivity and multiplicativity, consider $x\leq y$, i.e.\ $x+y=y$, and let $z\in B^+$. Then $(x+z)+(y+z)=x+y+z=y+z$ and $xz+yz=yz$, and henceforth $x+z\leq y+z$ and $xz\leq yz$, as claimed. This completes the proof that $\leq$ is an additive and multiplicative partial order on $B^+$.
 
 Since $0+1=1$, the relation $\leq$ contains $0\leq 1$. On the other hand, $\leq$ is generated by the relation $0\leq1$ for the following reason. Consider an equality $x+y=y$. Multiplying $0\leq 1$ by $y$ yields $0\leq y$, and therefore $x=x+0\leq x+y=y$, as claimed. 
 
 We conclude that $\leq$ is the smallest additive and multiplicative partial order on $B^+$ that contains $0\leq 1$. This means that $\leq$ is the partial order of $B^\pos$, i.e.\
 \[
  B^\pos \ = \ \bpgenquot B{0\leq1} \ = \ (B^\bullet, B^+,\leq).
 \]
 From this it is obvious that the quotient map $B\to B^\pos$ is a bijection between the respective underlying monoids and that $(B^\pos)^\core=(B^\bullet, B^+,=)$ is equal to $B$, i.e.\ the canonical morphism $B=B^\core\to(B^\pos)^\core$ is an isomorphism.
\end{proof}

\begin{ex}\label{ex: partial order of T-pos}
 As an immediate consequence of the characterization of the partial order of $B^\pos$ for algebraic and idempotent $B$, we see that the partial order of $\T^\pos$ is the natural linear order of the nonnegative real numbers. Moreover, we have $(\T^\pos)^\core=\T$.
\end{ex}


\subsection{Recovering the tropical semifield from the tropical hyperfield}
\label{subsection: recovering the tropical semifield from the tropical hyperfield}

In this section, we explain the relation between the tropical hyperfield $\bT$ and the tropical semifield $\T$. In particular, we see that $\T$ results from $\bT$ under a functorial construction. In this sense, we can consider $\bT$ as a refinement of $\T$.

\begin{prop}\label{prop: projection from the tropical hyperfield to the totally positive tropical semifield}
 The identity map $\bT=\R_{\geq0}\to \R_{\geq0}=\T^\pos$ is a morphism $\pi:\bT\to\T^\pos$ of ordered blueprints, which induces an isomorphism $\bT^\idem\to\T^\pos$. Taking algebraic cores yields an isomorphism $(\bT^\idem)^\core\to\T$.
\end{prop}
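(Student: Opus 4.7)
The plan is to construct the morphism $\pi:\bT\to\T^\pos$ directly, then extract $\bar\pi:\bT^\idem\to\T^\pos$ via the universal property of the idempotent quotient, and finally build a two-sided inverse $\iota:\T^\pos\to\bT^\idem$ using the universal properties of $\T$ and $\T^\pos$. The algebraic core statement will then be immediate from the preceding lemma.

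First, I would check that the identity map on $\R_{\geq0}$ is a morphism $\pi:\bT\to\T^\pos$. It is tautologically multiplicative and sends $0,1$ to $0,1$, so it suffices to verify that the induced semiring map $\pi^+:\bT^+=\N[\R_{>0}]\to(\T^\pos)^+=\overline\R$, $\sum a_i\mapsto \max_i a_i$, is order-preserving. This can be checked on generators of $\leq_\bT$: if $c\leq a+b$ in $\bT$, then by definition the maximum among $a,b,c$ occurs twice, so $c\leq\max\{a,b\}$ in the natural linear order on $\overline\R$, which by Example~\ref{ex: partial order of T-pos} is precisely the partial order of $\T^\pos$. Since $\T$ is idempotent ($\max\{1,1\}=1$), so is $\T^\pos$, and by the universal property of the quotient $\bT^\idem=\bpgenquot{\bT}{1+1\=1}$ the morphism $\pi$ factors uniquely through a morphism $\bar\pi:\bT^\idem\to\T^\pos$.

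Next, I would construct $\iota:\T^\pos\to\bT^\idem$ whose underlying map is again the identity on $\R_{\geq0}$. By the universal property of $\T=(\overline\R,\overline\R,=)$, a morphism $\T\to\bT^\idem$ amounts to a multiplicative map $\R_{\geq0}\to\R_{\geq0}$ that extends to a semiring homomorphism $\overline\R\to(\bT^\idem)^+$; since the partial order of $\T$ is trivial, there is no further order condition. Taking the identity, the extension condition reduces to verifying the identity $\max\{a,b\}\=a+b$ in $(\bT^\idem)^+$. For $a\geq b$, Lemma~\ref{lemma: characterization of monomial relations in the tropical hyperfield} gives both $a\leq a+b$ and $b\leq a+a$ in $\bT$; the second relation, combined with $a+a\=a$ in $\bT^\idem$, yields $b\leq a$, and adding $a$ to each side then produces $a+b\leq a+a\=a$. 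Combined with $a\leq a+b$ this gives $a+b\=a=\max\{a,b\}$; the case $b\geq a$ is symmetric. Thus we obtain a morphism $\T\to\bT^\idem$. To factor it through $\T^\pos=\bpgenquot{\T}{0\leq1}$ we need $0\leq1$ in $\bT^\idem$: the relation $0\leq 1+1$ already holds in $\bT$ (the maximum $1$ among $0,1,1$ occurs twice), and $1+1\=1$ in $\bT^\idem$ then delivers $0\leq1$. This produces the desired morphism $\iota:\T^\pos\to\bT^\idem$.

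Both compositions $\bar\pi\circ\iota$ and $\iota\circ\bar\pi$ act as the identity on the underlying monoid $\R_{\geq0}$, and since a morphism of ordered blueprints is determined by its action on the underlying monoid, both compositions are identity morphisms. Hence $\bar\pi:\bT^\idem\to\T^\pos$ is an isomorphism. Applying the algebraic core functor and invoking Lemma~\ref{lemma: an idempotent blueprint is isomorphic to the core of the associated totally positive ordered blueprint} (which identifies $(\T^\pos)^\core$ with $\T$, since $\T$ is algebraic and idempotent), we obtain the asserted isomorphism $(\bT^\idem)^\core\to\T$.

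The main obstacle is the computation of $\max\{a,b\}\=a+b$ in $(\bT^\idem)^+$: it is exactly the interaction of the hyperadditive relations of $\bT$ with the single imposed relation $1+1\=1$ that collapses the hyperfield structure onto the semifield structure, and this is the step where one must combine Lemma~\ref{lemma: characterization of monomial relations in the tropical hyperfield} with the idempotency to simultaneously establish both $a+b\leq a$ and $a\leq a+b$ for $a\geq b$.
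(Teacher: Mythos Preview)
Your proof is correct and follows essentially the same strategy as the paper: both verify that $\pi$ is order-preserving on generators, both factor through $\bT^\idem$, and both hinge on the same key computation that $a+b\=\max\{a,b\}$ in $(\bT^\idem)^+$ (via $a\leq a+b$ and $b\leq a+a$ together with idempotency), and both deduce the core statement from Lemma~\ref{lemma: an idempotent blueprint is isomorphic to the core of the associated totally positive ordered blueprint}. The only difference is packaging: the paper argues directly that $\bar\pi$ is bijective on underlying monoids (since $\pi$ is bijective and the quotient $\bT\to\bT^\idem$ is surjective), then checks that $\bar\pi^+$ is a semiring isomorphism and that $0\leq1$ holds in $\bT^\idem$, whereas you build an explicit two-sided inverse $\iota$ via universal properties; both routes rely on exactly the same computations.
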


\begin{proof}
 We begin with the verification that the identity map is a morphism $\pi:\bT\to\T^\pos$. Clearly $\pi$ is multiplicative and $\pi(0)=0$ and $\pi(1)=1$. Since $\bT^+=\N[\R_{>0}]$ is freely generated by the monoid $\R_{>0}=\R_{\geq0}-\{0\}$ and $\pi(0)=0$, it extends uniquely to a semiring homomorphism $\pi^+:\bT^+\to \T^+$. 
 
 That $\pi^+$ is order-preserving can be verified on the generators of the partial order $\leq$ of $\bT$, which are of the form $a\leq a+b$ and $b\leq a+a$ where $a,b\in\bT$ with $a$ larger than $b$ (as real numbers). Then we have $a+b=a$ in $\T$ and thus $\pi(a)\leq\pi(a)+\pi(b)$ in $\T^\pos$, which is the first desired relation. As we have seen in Example \ref{ex: partial order of T-pos}, we have $b\leq a$ in $\T^\pos$, and multiplying $0\leq1$ by $a$ yields $0\leq a$ in $\T^\pos$. Thus $b=b+0\leq a+a$ in $\T^\pos$, which is the second desired relation. This concludes the proof that $\pi:\bT\to \T^\pos$ is a morphism.
 
 The morphism $\pi:\bT\to \T^\pos$ induces a morphism $\bar\pi:\bT^\idem\to (\T^\pos)^\idem=\T^\pos$ where we use in the latter identification that $\T^\pos$ is already idempotent. Since the composition of $\bar\pi$ with the surjective quotient map $\bT\to\bT^\idem$ is the bijective map $\pi$, we conclude that $\bar\pi$ is also bijective. Thus it suffices to show that $\bar\pi^+:(\bT^\idem)^+\to(\T^\pos)^+=\T^+$ is a semiring isomorphism and that the defining relation $0\leq1$ of $\T^\pos=\bpgenquot{\T}{0\leq1}$ occurs in $\bT^\idem$.
 
 We begin with the proof that the surjective semiring homomorphism $\bar\pi^+$ is an isomorphism. It suffices to show that every equality $a+b=a$ in $\T^+$ holds already in $(\bT^\idem)^+$. This is so since $a+b=a$ in $\T^+$ means that $a$ is larger than $b$ (as real numbers) and thus $a\leq a+b$ in $\bT^+$. On the other side, we have $b\leq a+a$ in $\bT$ and thus $a+b\leq a+a+a=a$ in $\bT^\idem$. This shows that $a+b=a$ in $\bT^\idem$, as desired. 
 
 Finally, we observe that $0\leq 1+1=1$ in $\bT^\idem$, which concludes the proof that $\bar\pi:\bT^\idem\to\T^\pos$ is an isomorphism of ordered blueprints. The last claim $(\bT^\idem)^\core\simeq \T$ of the proposition follows at once from Lemma \ref{lemma: an idempotent blueprint is isomorphic to the core of the associated totally positive ordered blueprint} and Example \ref{ex: partial order of T-pos}.
\end{proof}


\section{Recovering the Giansiracusa bend}
\label{section: recovering the bend}

In their paper \cite{Giansiracusa-Giansiracusa16} on tropical scheme theory, Jeff and Noah Giansiracusa introduce the bend relations for a tropical variety. This approach finds a refinement in the author's paper \cite{Lorscheid15} that is based on ordered blueprints. We will see in this section that the tropicalization over the tropical hyperfield is a further refinement of the Giansiracusa tropicalization.

\subsection{The Giansiracusa bend}
\label{subsection: the bend}

We begin with a review of the Giansiracusa bend as a semiring. Let $\overline\R$ be the tropical semifield, as defined in section \ref{subsection: the tropical semifield as an algebraic blueprint}.

Let $k$ be a field with nonarchimedean absolute value $v:k\to\R_{\geq0}$, which we consider as a map into $\overline\R=\R_{\geq0}$ in the following. Let $X=\Spec R$ be an affine $k$-scheme and $\iota:X\to T$ a $k$-linear closed immersion into an affine $k$-scheme of the form $T=\Spec k[A]$ for some commutative monoid $A$. Let $\iota^\sharp:k[A]\to R$ be the corresponding surjection of $k$-algebras and $I=\ker\iota^\sharp$ the ideal of definition of $X$ inside $T$.

In the definition of the Giansiracusa bend, we make use of congruences for semirings, which are additive and multiplicative equivalence relations. Congruences can be characterized as those equivalence relations on semirings for which addition and multiplication of representatives defines a semiring structure on the quotient set. For more details, we refer to \cite[section 2.4]{Lorscheid18}.

The \emph{Giansiracusa bend of $R$ (with respect to $v$ and $\iota$)} is the semiring 
\[
 \Bend_{v,\iota}^{GG}(R) \ = \ \overline\R[A]/\bend_{v,\iota}(R)
\]
where $\bend_{v,\iota}(R)$ is the congruence on $\overline\R[A]$ that is generated by the \emph{bend relations}, which are relations of the form
\[\textstyle
 v(c_a)a+\sum v(c_j)b_j \ \sim \ \sum v(c_j)b_j 
\]
for which $c_aa-\sum c_jb_j\in I$ where $c_a,c_j\in k$ and $a,b_j\in A$. 

\begin{ex}\label{ex: the Giansiracusa bend}
 As an illustration, we calculate the Giansiracusa bend of the zero set of $T_1+T_2+1$. In this case, $T=\A^2_k=\Spec k[T+1,T+2]$ is the affine $k$-plane and $X$ the closed subscheme with coordinate algebra $R=k[T_1,T_2]/(T_1+T_2+1)$, together with the natural closed immersion $\iota:X\to T$. 
 
 It is easy to verify that the bend relation $\bend_{v,\iota}(R)$ of the Giansiracusa bend
 \[
  \Bend_{v,\iota}^{GG}(R) \ = \ \overline\R[T_1,T_2]/\bend_{v,\iota}(R)
 \]
 is generated by the relations
 \[
  T_1+T_2+1 \ \sim \ T_1+T_2 \ \sim \ T_1+1 \ \sim \ T_2+1.
 \]
\end{ex}

\begin{rem}
 To simplify our exposition, we omit the geometric counterpart that makes use of semiring schemes, but turn to the description of the bend as a blueprint right away. For details on a geometric description of the Giansiracusa bend as a semiring scheme, cf.\ \cite{Giansiracusa-Giansiracusa16} and \cite{Lorscheid15}.
\end{rem}


\subsection{The bend as a blueprint}
\label{subsection: the bend as a blueprint}

The description of the Giansiracusa bend as a quotient of $\overline\R[A]$ carries additional information that can be captured in terms of an algebraic blueprint $\Bend_{v,\iota}(R)$ whose ambient semiring is $\Bend_{v,\iota}(R)^+=\Bend_{v,\iota}^{GG}(R)$ and whose underlying monoid is
\[
 \Bend_{v,\iota}(R)^\bullet \ = \ \big\{ \, ta \in \overline\R[A] \, \big| \, t\in \overline\R, a\in A \, \big\}.
\]
We call the blueprint $\Bend_{v,\iota}(R)$ the \emph{bend of $R$ (with respect to $v$ and $\iota$)}. As explained in section \ref{subsection: free algebras}, the underlying monoid of a free algebra over an ordered blueprint consist of monomials, which yields the description
\[
 \Bend_{v,\iota}(R) \ = \ \bpquot{\T[A]}{\bend_{v,\iota}(R)}
\]
of the bend of $R$ where $\T=(\overline\R,\overline\R,=)$ is as in section \ref{subsection: the tropical semifield as an algebraic blueprint}.

The association $t\mapsto t1$ defines a morphism $\T\to\Bend_{v.\iota}(R)$ of blueprints, which turns the bend of $R$ into a blue $\T$-algebra.

We define the \emph{bend of $X$ (with respect to $v$ and $\iota$)} as the blue $\T$-scheme $\Bend_{v,\iota}(X)=\Spec\big(\Bend_{v,\iota}(R)\big)$.

\begin{ex}\label{ex: the bend as a blueprint}
 We continue Example \ref{ex: the Giansiracusa bend} where we described the Giansiracusa bend of the zero set of $T_1+T_2+1$. The bend of $R$ is the blueprint
 \[
  \Bend_{v,\iota}(R) \ = \ \bigbpgenquot{\T[T_1,T_2]}{T_1+T_2+1\= T_1+T_2\= T_1+1\= T_2+1}
 \]
 where $x\= y$ stands for $x\leq y$ and $y\leq x$. More explicitly, the ambient semiring of $\Bend_{v,\iota}(R)$ is 
 \[
  \Bend_{v,\iota}(R)^+ = \Bend_{v,\iota}^{GG}(R) = \overline\R[T_1,T_2] \big/ \big\langle T_1+T_2+1\sim T_1+T_2\sim T_1+1\sim T_2+1 \big\rangle
 \]
 and its underlying monoid is
 \[
  \Bend_{v,\iota}(R)^\bullet \ = \ \big\{ \, [tT_1^{e_1}T_2^{e_2}] \in \Bend_{v,\iota}^{GG}(R) \, \big| \, t\in\overline{\R}, e_1,e_2\in\N \, \big\}.
 \]
\end{ex}


\subsection{The Kajiwara-Payne tropicalization from the bend}
\label{subsection: the Kajiwara-Payne tropicalization from the bend}

The key insight from \cite{Giansiracusa-Giansiracusa16} is that the bend of $X$ with respect to a closed immersion $\iota:X\to T$ into a toric $k$-variety $X$ recovers tropicalization $X^\trop$ of $X$. At the same time we can recover the analytification $X^\an$ of $X$ from the bend with respect to the closed immersion $\widehat\iota:X\to \widehat T$ that is induced by the surjection $\pi:k[R^\bullet]\to R$ where $\widehat T=\Spec k[R^\bullet]$, $R^\bullet$ is the underlying monoid of $R$ and $\pi$ sends a formal linear combination $\sum c_aa$ of elements of $R$ to its value as an element of $R$.

Similarly to the case of $\bT$-rational points of an ordered blue $\bT$-scheme, we can endow the set $Y(\T)=\Hom_\T(S,\T)$ of $\T$-rational points of a blue $\T$-scheme $Y=\Spec S$ with a the compact-open topology with respect to the discrete topology of $S$ and the Euclidean topology of $\T=\R_{\geq0}$. The following is Theorem 9.1 in \cite{Lorscheid15}; also cf.\ \cite{Giansiracusa-Giansiracusa16}.

\begin{thm}\label{thm: KP-tropicalization from GG-bend}
 The Berkovich space $X^\an$ is naturally homeomorphic to $\Bend_{v,\widehat\iota}(X)(\T)$, the Kajiwara-Payne tropicalization $X^\trop$ is naturally homeomorphic to $\Bend_{v,\iota}(X)(\T)$ and the diagram 
 \[
  \begin{tikzcd}[column sep=3cm]
   X^\an \ar{r}{\trop} \ar{d}[left]{\simeq} & X^\trop \ar{d}[right]{\simeq} \\
   \Bend_{v,\widehat\iota}(X)(\T) \ar{r}{\beta_\ast} & \Bend_{v,\iota}(X)(\T).
  \end{tikzcd}
 \]
 of continuous maps commutes.
\end{thm}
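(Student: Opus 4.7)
The plan is to reduce Theorem \ref{thm: KP-tropicalization from GG-bend} to Theorem \ref{thm: recovering the KP tropicalization from the scheme theoretic tropicalization} by constructing natural homeomorphisms $\Trop_\bv(\bX)(\bT)\to\Bend_{v,\widehat\iota}(X)(\T)$ and $\Trop_\bv(Y)(\bT)\to\Bend_{v,\iota}(X)(\T)$ that intertwine the two pullback maps. Pasting the resulting square with the commutative square from Theorem \ref{thm: recovering the KP tropicalization from the scheme theoretic tropicalization} yields the diagram claimed here.

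For the key homeomorphism $\Trop_\bv(Y)(\bT)\to\Bend_{v,\iota}(X)(\T)$, I would describe both sides concretely as sets of monoid morphisms $\phi\colon A\to\R_{\geq0}$. On the one hand, by Lemma \ref{lemma: the base change to the tropical hyperfield as quotient of a free algebra} and Lemma \ref{lemma: characterization of monomial relations in the tropical hyperfield}, a $\bT$-linear morphism $B\otimes_\bk\bT\to\bT$ corresponds to a monoid morphism $\phi\colon A\to\R_{\geq0}$ such that, for every polynomial identity $\sum c_ia_i=0$ in $R$ (with $c_i\in k$ and $a_i\in A$), the maximum among $\{v(c_i)\phi(a_i)\}_i$ is attained at least twice. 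On the other hand, since $\T$ has max-addition, a $\T$-linear morphism $\Bend_{v,\iota}(R)\to\T$ corresponds to a monoid morphism $\phi\colon A\to\R_{\geq0}$ such that, for every bend relation arising from $c_aa-\sum c_jb_j\in I$, one has the inequality $v(c_a)\phi(a)\leq\max_j v(c_j)\phi(b_j)$. Using $v(-1)=1$, isolating in turn each term of a fixed polynomial identity $\sum c_ia_i=0$ yields one such inequality for every choice of ``left-hand term,'' and the conjunction of these inequalities is precisely equivalent to the statement that the maximum among $\{v(c_i)\phi(a_i)\}$ is attained at least twice. This is essentially the symmetry already captured by Lemma \ref{lemma: strong version of the strict triangular inequality}. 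The two characterizations therefore define the same subset of $\Hom(A,\R_{\geq0})$, which gives the bijection. The analogous bijection for $\widehat\iota$ is proved identically with $R^\bullet$ replacing $A$.

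To promote these bijections to homeomorphisms, I would note that both sides carry the compact-open topology with respect to the discrete topology on $A$ (resp.\ $R^\bullet$) and the Euclidean topology on $\R_{\geq0}=\bT^\bullet=\T^\bullet$; under the bijection the generating evaluation maps $\phi\mapsto\phi(a)$ match, so the topologies agree. Commutativity with the pullback map $\beta_\ast$ is immediate since on both the $\bT$- and the $\T$-side, the relevant transition map is induced by the inclusion of monoids $A\hookrightarrow R^\bullet$ (equivalently by the inclusion $B\hookrightarrow\bR$), which corresponds on rational points to restricting $\phi$ from $R^\bullet$ to $A$. The main obstacle is the combinatorial equivalence in the middle of the second paragraph: a single polynomial identity yields many one-sided bend inequalities, and one must verify carefully that their conjunction is exactly the symmetric ``max occurs twice'' condition. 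An alternative route that largely sidesteps this bookkeeping would be to invoke Corollary \ref{cor: the Giansiracusa bend from the scheme theoretic tropicalization} (the refined form of Theorem \ref{thmB} proven in this section) together with Proposition \ref{prop: projection from the tropical hyperfield to the totally positive tropical semifield}, identifying $\Bend_{v,\iota}(R)$ with a functorial quotient of $\Trop_\bv(B)$ and then transferring the identity $\T\simeq(\bT^\idem)^\core$ to the level of rational point sets.
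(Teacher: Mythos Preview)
The paper does not actually prove this theorem: it is stated as a citation of \cite[Thm.\ 9.1]{Lorscheid15} (with a reference to \cite{Giansiracusa-Giansiracusa16}) and no argument is given in the present text. Your proposal, by contrast, supplies a self-contained proof using the machinery developed in this paper, and the argument is correct.

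A brief comparison. Your main route factors through Theorem \ref{thm: recovering the KP tropicalization from the scheme theoretic tropicalization}: you identify both $\Trop_\bv(Y)(\bT)$ and $\Bend_{v,\iota}(X)(\T)$ with the same subset of $\Hom(A,\R_{\geq0})$, namely those $\phi$ for which the maximum among $\{v(c_i)\phi(a_i)\}$ is attained twice for every $\sum c_ia_i\in I$. The combinatorial equivalence you flag as the ``main obstacle'' is indeed the crux, and your sketch is right: the conjunction over all choices of distinguished term $i_0$ of the one-sided inequalities $v(c_{i_0})\phi(a_{i_0})\leq\max_{i\neq i_0}v(c_i)\phi(a_i)$ is equivalent to the symmetric condition (apply it to an index where the maximum is attained to get a second witness; conversely, if the maximum occurs twice, dropping any single term leaves a copy). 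The $\widehat\iota$ case goes through because the ordered blueprint $B$ associated with $\widehat\iota$ is exactly $\bR$, so $\Trop_\bv(\bX)(\bT)$ already matches the left column. The topology claim is fine once you note, as in the proof of Theorem \ref{thm: recovering the KP tropicalization from the scheme theoretic tropicalization}, that the generating opens on either side reduce (via $\bT$- resp.\ $\T$-linearity and continuity of multiplication) to evaluations at elements of $A$.

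Your alternative route via Theorem \ref{thm: recovering the bend relation} and Proposition \ref{prop: projection from the tropical hyperfield to the totally positive tropical semifield} is also viable and non-circular, since the proof of Theorem \ref{thm: recovering the bend relation} given here is independent of the present statement; one then uses that $\T$ is idempotent and algebraic so that $\Hom_\T((\,\cdot\,)^{\idem,\core},\T)=\Hom_\T(\,\cdot\,,\T)$, together with the bijection $\Hom_{\bT}(-,\bT)\simeq\Hom_{\T^\pos}((-)^{\idem},\T^\pos)$ induced by $\bT^\idem\simeq\T^\pos$. Either way, what you gain over the paper's treatment is a proof internal to this text rather than a pointer to \cite{Lorscheid15}.
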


\begin{ex}\label{ex: Kajiwara-Payne tropicalization from the bend}
 We continue Examples \ref{ex: the Giansiracusa bend} and \ref{ex: the bend as a blueprint} where we consider the bend
 \[
  \Bend_{v,\iota}(R) \ = \ \bigbpgenquot{\T[T_1,T_2]}{T_1+T_2+1\= T_1+T_2\= T_1+1\= T_2+1}
 \]
 of $R=k[T_1,T_2]/(T_1+T_2+1)$. Let $X=\Spec \big(\Bend_{v,\iota}(R)\big)$. Sending a $\T$-linear morphism $f:\Bend_{v,\iota}(R)$ to $\big(f(T_1),f(T_2)\big)\in\R_{\geq0}^2$ defines an identification
 \begin{multline*}
  X(\T) \ = \ \Hom_\T(\Bend_{v,\iota}(R),\T\big) \\
  = \ \big\{ \, (a_1,a_2)\in\R_{\geq0}^2\, \big| \, \max\{a_1,a_2,1\}=\max\{a_1,a_2\}=\max\{a_1,1\}=\max\{a_2,1\}\, \big\}.
 \end{multline*}
 The condition 
 \[
  \max\{a_1,a_2,1\} \ = \ \max\{a_1,a_2\} \ = \ \max\{a_1,1\} \ = \ \max\{a_2,1\}
 \]
 is satisfied precisely for those $(a_1,a_2)\in\R_{\geq0}^2$ for which the maximum occurs twice among $a_1$, $a_2$ and $1$. This set is the bend locus of $T_1+T_2+1$, which equals $X^\trop$; also cf. Example \ref{ex: the Kajiwara-Payne tropicalization from the scheme theoretic tropicalization}.
\end{ex}


\subsection{Recovering the bend from the scheme theoretic tropicalization}
\label{subsection: recovering the bend from the scheme theoretic tropicalization}

In this section, we do not require any assumptions on the monoid $A$. Therefore Theorem \ref{thm: recovering the bend relation} applies to both the analytification, in which case we use the closed immersion $\iota:\Spec R\to\Spec k[R^\bullet]$ from section \ref{subsection: the Kajiwara-Payne tropicalization from the bend}, and the tropicalization, in which case we use a closed immersion $\iota:X\to T$ into a toric variety $T$.

Let $\bk=k^\mon$ and $\bv:\bk\to\bT$ the associated morphism from Theorem \ref{thm: nonarchimedean seminoms as morphisms}. Let $\bR=R^\mon$ and $\pi:\bk[A]\to\bR$ the morphism that maps $ca$ to $\iota^\sharp(ca)$. As in section \ref{subsection: analytification and tropicalization as rational point sets}, we define
\[\textstyle
 B \ = \ \bpgenquot{\bk[A]}{c_aa\leq \sum c_jb_j|\pi(c_aa)\leq\sum \pi(c_jb_j) \text{ in }\bR}
\]
where $c_a,c_j\in\bk$ and $a,b_j\in A$. Let $Y=\Spec B$.

\begin{thm}\label{thm: recovering the bend relation}
 There are canonical isomorphisms 
 \[
  \Bend_{v,\iota}(X)^\pos \ \stackrel\sim\longrightarrow \ \Trop_\bv(Y)^\idem \quad \text{and} \quad \Bend_{v,\iota}(X) \ \stackrel\sim\longrightarrow \ (\Trop_\bv(Y)^\idem)^\core.
 \]
\end{thm}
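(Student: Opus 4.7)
The plan is to prove the second (blueprint-level) isomorphism $\Bend_{v,\iota}(R) \simeq ((B \otimes_\bk \bT)^\idem)^\core$ directly by computing a presentation of the right-hand side, and then to derive the first (ordered-blueprint-level) isomorphism as a formal consequence.

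First I would apply Lemma \ref{lemma: the base change to the tropical hyperfield as quotient of a free algebra} to write $B \otimes_\bk \bT$ as a quotient of $\bT[A]$ by the relations $v(c_a) a \leq \sum v(c_j) b_j$ for $c_a a - \sum c_j b_j \in I$ (noting that $\pi(c_a a) = \sum \pi(c_j b_j)$ in $R$ amounts to membership in the ideal of definition $I = \ker \iota^\sharp$). Since the idempotent quotient $(-)^\idem = - \otimes_\Fun \B$ is a left adjoint and therefore commutes with free algebras and with quotients by relations, combining with the identification $\bT^\idem \simeq \T^\pos$ from Proposition \ref{prop: projection from the tropical hyperfield to the totally positive tropical semifield} yields
\[
 (B \otimes_\bk \bT)^\idem \ \simeq \ \bigbpgenquot{\T^\pos[A]}{v(c_a) a \leq \sum v(c_j) b_j \,\big|\, c_a a - \sum c_j b_j \in I}.
\]

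The next step is the key observation that $(B \otimes_\bk \bT)^\idem$ is idempotent (by construction) and totally positive: the defining relation $0 \leq 1+1$ of $\bT$ combines with $1 + 1 = 1$ to give $0 \leq 1$. A short argument, in the spirit of Lemma \ref{lemma: an idempotent blueprint is isomorphic to the core of the associated totally positive ordered blueprint}, shows that for any idempotent and totally positive ordered blueprint $C$, one has $x \leq y$ in $C^+$ if and only if $x + y = y$ (forward: $x + y \leq y + y = y$ and $y = y + 0 \leq y + x$; converse: $x = x + 0 \leq x + y = y$, using $0 \leq z$ for every $z$). Applied to our presentation, each generating relation $v(c_a) a \leq \sum v(c_j) b_j$ becomes the semiring equality $v(c_a) a + \sum v(c_j) b_j = \sum v(c_j) b_j$ in the ambient semiring $\overline\R[A]$, which is precisely the Giansiracusa bend relation. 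Hence the ambient semiring of $(B \otimes_\bk \bT)^\idem$ is $\Bend_{v,\iota}^{GG}(R)$ and its underlying monoid is the image of $\{\,ta \mid t \in \overline\R,\, a \in A\,\}$. Taking $(-)^\core$ then trivializes the partial order but preserves this data, yielding exactly $\Bend_{v,\iota}(R)$.

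For the first isomorphism, note that $\Bend_{v,\iota}(R)$ is algebraic and idempotent (as a quotient of $\T[A]$, which is idempotent since $\overline\R$ is). Lemma \ref{lemma: an idempotent blueprint is isomorphic to the core of the associated totally positive ordered blueprint} then asserts that $\Bend_{v,\iota}(R)^\pos$ has ambient semiring $\Bend_{v,\iota}^{GG}(R)$ with partial order $x \leq y \iff x + y = y$, which agrees with the description of $(B \otimes_\bk \bT)^\idem$ obtained above; this delivers $\Bend_{v,\iota}(R)^\pos \simeq (B \otimes_\bk \bT)^\idem$. The main obstacle I expect is producing a \emph{canonical} morphism --- rather than merely exhibiting an abstract isomorphism --- which is cleanest via the universal properties: one constructs $\bend_{v,\iota}(R) \to (B \otimes_\bk \bT)^\idem$ by sending $ta \mapsto a \otimes t$ (well-defined thanks to Theorem \ref{thm: nonarchimedean seminoms as morphisms} applied to interpret the tropical coefficients) and checks the relations on both sides, with the inverse supplied by the explicit presentation. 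Verifying bijectivity on underlying monoids --- not merely on ambient semirings --- requires a brief bookkeeping step, but follows from the structure inherited from Lemma \ref{lemma: the base change to the tropical hyperfield as quotient of a free algebra}.
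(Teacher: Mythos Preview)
Your proposal is correct and takes essentially the same approach as the paper: both use the canonical map $ta \mapsto a \otimes t$, invoke $\bT^\idem \simeq \T^\pos$ from Proposition~\ref{prop: projection from the tropical hyperfield to the totally positive tropical semifield}, and convert the generating relations $v(c_a)a \leq \sum v(c_j)b_j$ into the bend equalities via the equivalence $x \leq y \Leftrightarrow x+y=y$ in idempotent totally positive ordered blueprints, with Lemma~\ref{lemma: an idempotent blueprint is isomorphic to the core of the associated totally positive ordered blueprint} connecting the two claims. The only difference is organizational---the paper proves the first isomorphism directly and deduces the second by taking cores, whereas you compute a presentation of $(B\otimes_\bk\bT)^\idem$ via Lemma~\ref{lemma: the base change to the tropical hyperfield as quotient of a free algebra} to obtain the second isomorphism first---but the mathematical content is identical.
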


\begin{proof}
 The theorem follows at once from \cite[Cor.\ 7.13]{Lorscheid15}, using Proposition \ref{prop: projection from the tropical hyperfield to the totally positive tropical semifield}. In the following, we give an independent proof.
 
 Thanks to Lemma \ref{lemma: an idempotent blueprint is isomorphic to the core of the associated totally positive ordered blueprint}, the second claim of the theorem follows from the first claim. Thus it suffices to show that the association $ta\mapsto a\otimes t$ with $t\in\T=\bT$ and $a\in A$ defines an isomorphism
 \[
  f: \quad \Bend_{v,\iota}(R)^\pos \ = \ \big( \bpquot{\T[A]}{\bend_v(B)} \big)^\pos \ \longrightarrow \ \big( B\otimes_\bk\bT \big)^\idem.
 \] 
 
 We begin with the proof that $f$ is well-defined as a morphism. Since the association $ta\mapsto a\otimes t$ defines a multiplicative map $\T[A]\to\Trop_\bv(B)$ that maps $0$ to $0$ and $1$ to $1$, we are left with showing that it respects all the defining relations of $\Bend_{v,\iota}(R)^\pos$. Since the tensor product is compatible with taking the quotient by the relation $1+1\=1$ and by Proposition \ref{prop: projection from the tropical hyperfield to the totally positive tropical semifield}, we have
 \[
  \big( B\otimes_\bk\bT \big)^\idem \ = \ B\otimes_\bk\bT^\idem \ = \ B\otimes_\bk\T^\pos.
 \]
 This shows that $f$ respects all relations coming from $\T^\pos$, which reduces our proof to the verification that the bend relations hold in $B\otimes_\bk\T^\pos$. This can be verified on generators. For $c_a,c_j\in k$ and $a,b_j\in A$, consider the relation
 \[\textstyle
  v(c_a)a+\sum v(c_j)b_j \ = \ \sum v(c_j)b_j
 \]
 in $\bend_v(B)$ stemming from the element $c_aa-\sum c_jb_j$ in the ideal of definition $I=\ker(\iota^\sharp)$ of $X$ in $T$. Then we have $c_aa\leq \sum c_jb_j$ in $B$ and 
 \[
  a\otimes v(c_a) \ = \ c_aa\otimes 1 \ \leq \ \sum c_jb_j\otimes 1 \ = \ \sum b_j\otimes v(b_j)
 \]
 in $B\otimes_\bk\bT$. Since $\T^\pos=\bT^\idem$ is idempotent, this yields 
 \[\textstyle
  a\otimes v(c_a)+\sum b_j\otimes v(c_j) \quad \leq \quad \sum b_j\otimes v(b_j)+\sum b_j\otimes v(c_j) \quad = \quad \sum b_j\otimes v(c_j)
 \]
 in $B\otimes_\bk\T^\pos$, and since $\T^\pos$ is totally positive, this yields 
 \[\textstyle
  \sum b_j\otimes v(c_j) \quad = \quad 0+\sum b_j\otimes v(c_j) \quad \leq \quad a\otimes v(c_a)+\sum b_j\otimes v(c_j) 
 \]
 in $B\otimes_\bk\T^\pos$. Thus we have $a\otimes v(c_a)+\sum b_j\otimes v(c_j) =\sum b_j\otimes (c_j)$ in $\Trop_\bv(B)$ as desired, which shows that $f:\Bend_{v,\iota}(R)^\pos\to \Trop_\bv(B)$ is a well-defined $\T^\pos$-linear morphism of ordered blueprints.

 In order to show that $f$ is an isomorphims of ordered blueprints, we need to show that all defining relations of $B\otimes_\bk\T^\pos$ are already contained in $\Bend_{v,\iota}(R)^\pos$. Since $f$ is $\T^\pos$-linear, we can restrict ourselves to the relations coming from $B$, which are of the form $c_aa\otimes 1\leq\sum c_jb_j\otimes 1$ whenever $c_aa\leq \sum c_jb_j$ in $B$. In this case, $c_aa-\sum c_jb_j$ is an element of $I$ and $\Bend_{v,\iota}(R)$ contains the bend relation $v(c_a)a+\sum v(c_j)b_j =\sum v(c_j)b_j$. Thus we have
 \[\textstyle
  v(c_a)a \quad = \quad v(c_a)a+0  \quad \leq \quad v(c_a)a+\sum v(c_j)b_j \quad = \quad \sum v(c_j)b_j
 \]
 in $\Bend_{v,\iota}(R)^\pos$, as desired. This shows that $f$ is an isomorphism and concludes the proof of the theorem.
\end{proof}

Let $R$ and $B$ be as before. As an immediate consequence of Theorem \ref{thm: recovering the bend relation}, we find the following natural interpretation of the Giansiracusa bend $\Bend_{v,\iota}^{GG}(R)=\Bend_{v,\iota}(R)^+$.

\begin{cor}\label{cor: the Giansiracusa bend from the scheme theoretic tropicalization}
 We have a canonical identification $\Bend_{v,\iota}^{GG}(R)=(B\otimes_\bk\bT^\idem)^+$. \qed
\end{cor}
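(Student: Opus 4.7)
The plan is to deduce the corollary directly from Theorem \ref{thm: recovering the bend relation} by applying the functor $(-)^+$ to the canonical isomorphism $\Bend_{v,\iota}(X)^\pos \stackrel\sim\to \Trop_\bv(Y)^\idem$ and then identifying the two sides explicitly.

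First I would treat the left-hand side. The blueprint $\Bend_{v,\iota}(R) = \bpquot{\T[A]}{\bend_{v,\iota}(R)}$ is algebraic by construction, so its partial order is trivial. Passing to $\Bend_{v,\iota}(R)^\pos = \bpgenquot{\Bend_{v,\iota}(R)}{0\leq1}$, I need to check that the equivalence relation $\=$ generated in the definition of the quotient does not collapse any elements of the ambient semiring $\Bend_{v,\iota}(R)^+$. Starting from $=$ together with $0\leq 1$ and closing under addition and multiplication, one only obtains relations of the form $x \leq x+y$ (by multiplying $0\leq 1$ by $y$ and adding $x$); one never produces a relation $x+y \leq x$ for $y\neq 0$. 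Hence $\=$ reduces to equality, and $(\Bend_{v,\iota}(R)^\pos)^+ = \Bend_{v,\iota}(R)^+ = \Bend_{v,\iota}^{GG}(R)$.

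Next I would handle the right-hand side. By the compatibility of the idempotent quotient with the tensor product, noted in section \ref{subsection: idempotent ordered blueprints} (namely $(-)^\idem = (-)\otimes_\Fun\B$), one has
\[
 \bigl(\Trop_\bv(Y)^\idem\bigr)^+ \ = \ \bigl((B\otimes_\bk\bT)\otimes_\Fun\B\bigr)^+ \ = \ \bigl(B\otimes_\bk\bT^\idem\bigr)^+,
\]
where in the last step I use the associativity of the tensor product together with $\bT^\idem = \bT\otimes_\Fun\B$. Combining this with the previous paragraph and taking ambient semirings in the isomorphism provided by Theorem \ref{thm: recovering the bend relation} yields the canonical identification $\Bend_{v,\iota}^{GG}(R) = (B\otimes_\bk\bT^\idem)^+$.

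There is no real obstacle here: the corollary is bookkeeping built on top of Theorem \ref{thm: recovering the bend relation}. The only nontrivial point is the verification that forming the associated totally positive ordered blueprint of an algebraic blueprint does not alter the ambient semiring, and that is a short direct check as above.
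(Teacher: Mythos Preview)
Your overall plan---apply $(-)^+$ to the isomorphism of Theorem \ref{thm: recovering the bend relation} and simplify both sides---is exactly what the paper intends, and your treatment of the right-hand side via $(-)^\idem=(-)\otimes_{\Fun}\B$ and associativity of the tensor product is correct (this identification $(B\otimes_\bk\bT)^\idem=B\otimes_\bk\bT^\idem$ is in fact used verbatim inside the proof of Theorem \ref{thm: recovering the bend relation}).

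The gap is in your handling of the left-hand side. You assert that for an algebraic blueprint, adjoining $0\leq 1$ ``never produces a relation $x+y\leq x$ for $y\neq 0$'' and hence does not collapse the ambient semiring. This is false in general: the preorder generated by $0\leq 1$ is $a\leq' b \Leftrightarrow b=a+c$ for some $c$, and antisymmetry of $\leq'$ can fail badly---for instance over $B^+=\Z$ one gets $0\leq' 1$ and $1\leq' 0$ (take $c=-1$), so everything collapses. What saves you here is that $\Bend_{v,\iota}(R)^+=\overline\R[A]/\bend_{v,\iota}(R)$ is \emph{idempotent}. Either invoke Lemma \ref{lemma: an idempotent blueprint is isomorphic to the core of the associated totally positive ordered blueprint} directly (it gives $(\Bend_{v,\iota}(R)^\pos)^\core=\Bend_{v,\iota}(R)$, hence equality of ambient semirings), or complete your direct check using idempotence: if $b=a+c$ and $a=b+d$, then $a=a+c+d$, so $b=a+c=(a+c+d)+c=a+c+d=a$, which is the required antisymmetry. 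With that correction the argument goes through and matches the paper's.
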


\begin{ex}
 We illustrate Theorem \ref{thm: recovering the bend relation} in the example of the zero set $X$ of $T_1+T_2+1$ in the affine plane $T=\A^2_k$ over $k$. We recall the notation from Example \ref{ex: the Kajiwara-Payne tropicalization from the scheme theoretic tropicalization}: let $R=k[T_1,T_2]/(T_1+T_2+1)$ be the coordinate ring of $X$ and $\iota:X\to\A^2_k$ be the closed immersion as a subscheme. Let $\bk=k^\mon$ be the associated monomial ordered blueprint and $\bv:\bk\to\bT$ the morphism associated with the given nonarchimedean absolute value $v:k\to\R_{\geq0}$. Let 
 \[
  B \ = \ \bigbpgenquot{\bk[T_1,T_2]}{0 \leq T_1+T_2+1, \, -T_1 \leq T_2+1, \, -T_2 \leq T_1+1, \, -1 \ \leq \ T_1+T_2}
 \]
 be the ordered blueprint associates with $\iota$. As explained in Example \ref{ex: the Kajiwara-Payne tropicalization from the scheme theoretic tropicalization}, we have
 \begin{multline*}
  \Trop_\bv(B) \ = \ B\otimes_\bk\bT \\
  = \ \bigbpgenquot{\bT[T_1,T_2]}{0\leq T_1+T_2+1,\  T_1 \leq T_2+1,\  T_2 \leq T_1+1,\  1 \leq T_1+T_2}.
 \end{multline*}
 If we impose the additional relation $1+1=1$ on $\Trop_\bv(B)$, then we conclude that 
 \[
  T_1+T_2+1 \ \leq \ T_1 + T_2 + T_1+T_2 \ = \ T_1+T_2
 \]
 (using $1\leq T_1+T_2$) and that
 \[
  T_1+T_2 \ = \ T_1+T_2+0 \ \leq \ T_1+T_2+1+1 \ = \ T_1+T_2+1
 \]
 (using $0\leq 1+1$) in $\Trop_\bv(B)^\idem$. Thus we gain the equality $T_1+T_2+1=T_1+T_2$ in $(\Trop_\bv(B)^\idem)^+$. Repeating the previous argument with the roles of $T_1$, $T_2$ and $1$ exchanged yields the relations
 \[
  T_1+T_2+1 \ = \ T_1+T_2 \ = \ T_1+1 \ = \ T_2+1
 \]
 in $(\Trop_\bv(B)^\idem)^+$, which generate the bend relation $\bend_{v,\iota}(R)$ on $\overline{\R}[T_1,T_2]$, cf.\ Examples \ref{ex: the Giansiracusa bend} and \ref{ex: the bend as a blueprint}.
\end{ex}

\bibliographystyle{plain}
\bibliography{tropical}

\end{document}